\newtheorem{theorem}{Theorem}[section]
\newtheorem{lemma}[theorem]{Lemma}
\theoremstyle{definition}
\newtheorem{definition}[theorem]{Definition}
\newtheorem{proposition}[theorem]{Proposition} 
\newtheorem{corollary}[theorem]{Corollary}
\theoremstyle{remark}
\newtheorem{remark}[theorem]{Remark}
\numberwithin{equation}{section}
\newcommand{\ten}{\circledast} 
\newcommand{\rep}[1]{%
  {%
    \tiny%
    \begin{matrix}%
      #1%
    \end{matrix}%
  }%
  }
\begin{document}

\title[Hochschild-Mitchell Cohomology]{Homological epimorphisms in functor categories and Hochschild-Mitchell cohomology}



\author{Valente Santiago Vargas}
\address{Departamento de Matem\'aticas, Facultad de Ciencias, Universidad Nacional Aut\'onoma de M\'exico,
Circuito Exterior, Ciudad Universitaria, C.P. 04510, Ciudad de M\'exico, MEXICO.}
\curraddr{}
\email{valente.santiago@ciencias.unam.mx}
\thanks{}

\author{Edgar Omar Velasco P\'aez}
\address{Departamento de Matem\'aticas, Facultad de Ciencias, Universidad Nacional Aut\'onoma de M\'exico,
Circuito Exterior, Ciudad Universitaria, C.P. 04510, Ciudad de M\'exico, MEXICO.}
\curraddr{}
\email{edgar-bkz13@ciencias.unam.mx}
\thanks{}

\subjclass[2020]{Primary 18A25, 18E05; Secondary 16D90,16G10}

\date{}

\dedicatory{}

\commby{}

\begin{abstract}
In this paper we study homological epimorphisms in functor categories.
Given an ideal $\mathcal{I}$ that satisfies certain conditions in a $K$-category $\mathcal{C}$, we obtain a homological epimorphism $\Phi:\mathcal{C}\longrightarrow \mathcal{C}/\mathcal{I}$.  We investigate the relationship of the Hochschild-Mitchell cohomologies $H^{i}(\mathcal{C})$  and $H^{i}(\mathcal{C}/\mathcal{I})$ of $\mathcal{C}$ and $\mathcal{C}/\mathcal{I}$, respectively,  and we show that they can be connected by a long exact sequence. This result is a generalization of the first exact sequence obtained in \cite[Theorem 3.4 (1)]{KoenigNagase} by Koenig and Nagase. As an application of our results, we study the Hochschild-Mitchell cohomology of the triangular matrix category  $\Lambda=\left[ \begin{smallmatrix}
\mathcal{T} & 0 \\ 
M & \mathcal{U}
\end{smallmatrix}\right]$ as defined in \cite{LeOS}, we show that the Hochschild-Mitchell cohomologies $H^{i}(\Lambda)$  and $H^{i}(\mathcal{U})$ can be connected by a long exact sequence. This result extends the well-known results independently discovered  by Cibils and Michelena-Platzeck; see \cite{Cibils}  and \cite{MichelanaPlat}.  Finally, we prove that a torsion free class in a $K$-category induces a homological epimorphism and, we show that certain recollement of abelian categories can be lifted to a recollement of derived categories.


. \end{abstract}

\maketitle

\section{Introduction}\label{sec:1}
Let $A$ be a finite dimensional associative algebra with identity over an algebraically closed field $K$. The Hochschild cohomology groups $H^{i}(A,X)$ of $A$ with coefficients in a finitely generated $A$-$A$-bimodule $X$ were defined by Hochschild in 1945 in \cite{Hochschild}. When $X=A$, we usually write $H^{i}(A)$ instead of $H^{i}(A,A)$, and $H^{i}(A)$ is called the $i$-th Hochschild cohomology group of $A$.\\
On the other hand, the Hochschild-Mitchell cohomology of a $K$-linear category was defined by Mitchell in \cite{Mitchelring}. It is worth mentioning that  several authors have studied  the Hochschild-Mitchell cohomology of a $K$-category,  including, C. Cibils, E. Herscovich, E. N. Marcos, A. Solotar; 
(see, \cite{Chites}, \cite{CibilsMarcos}, \cite{CibilsRedondo}, \cite{CibiSoloRedon}, \cite{HersSolotar1}, \cite{HersSolotar2},  \cite{KoenigNagase}).\\
When studying finite dimensional algebras, certain $K$-linear categories arise. For instance, given a $K$-algebra of the form $B = KQ/I$, where $KQ$ is the path algebra associated with a finite quiver $Q$ and $I$ is an admissible ideal,  the universal Galois covering $F:\mathcal{A}\longrightarrow B$ can be constructed, where $\mathcal{A}$ is a $K$-linear category.  Sometimes, in order to obtain information about the algebra $B$,  the category $\mathcal{A}$ can be useful.  For example, C. Cibils and M. J. Redondo considered in \cite{CibilsRedondo} the Hochschild-Mitchell cohomology of a $K$-category as defined in \cite{Mitchelring}, and  they proved that given a Galois covering $F:\mathcal{A}\longrightarrow B$ defined by a group $G$, there is a spectral sequence $H^{\ast}(G,H^{\ast}(A, FM))$ that converges to $H^{\ast}(B,M)$  for any bimodule $M$, where $FM$ is the induced $A$-bimodule.\\

In this paper, we study conditions on an ideal $\mathcal{I}$ of a category $\mathcal{C}$  for which we obtain a homological epimorphism $\Phi:\mathcal{C}\longrightarrow \mathcal{C}/\mathcal{I}$.  We investigate the relationship of the Hochschild-Mitchell cohomologies $H^{i}(\mathcal{C})$  and $H^{i}(\mathcal{C}/\mathcal{I})$ of $\mathcal{C}$ and $\mathcal{C}/\mathcal{I}$, respectively,  and show that they can be connected by a long exact sequence (see Theorem \ref{episuclarga}). As an application of this result, we study the Hochschild-Mitchell cohomology of the triangular matrix category  $\Lambda=\left[ \begin{smallmatrix}
\mathcal{T} & 0 \\ 
M & \mathcal{U}
\end{smallmatrix}\right]$ as defined in \cite{LeOS}, we show that the Hochschild-Mitchell cohomologies $H^{i}(\Lambda)$  and $H^{i}(\mathcal{U})$ can be connected by a long exact sequence. This result extends Cibil's and Michelena-Platzeck's well-known result (see \cite[Theorem 1.14]{MichelanaPlat}). We also construct a long exact sequence for the one-point extension category, this result is a generalization of a well-known result of D. Happel (see Corollary \ref{longHappel}). We  prove that a torsion free class in a $K$-category induces in a very canonical way a homological epimorphism (see Proposition \ref{homotorsion}). Finally, we show that certain recollements of abelian categories induces recollement of derived categories (see Theorem \ref{recolleabel}).\\


We now  briefly describe the contents on this paper.\\
In Section 2 we recall basic concepts of functor categories, and we introduce the Hochschild-Mitchell cohomology for $K$-categories.\\
In Section 3, we study ideals $\mathcal{I}$ of a category $\mathcal{C}$ and the canonical projection $\Phi:\mathcal{C}\longrightarrow \mathcal{C}/\mathcal{I}$. We recall the notion of a strongly idempotent ideal as seen in Definition \ref{kidemcat}, and we develop the theory of homological epimorphisms in functor categories. In particular, we generalize a result due to Geigle and Lenzing in \cite{GeigleLen}, which characterizes homological epimorphisms; see Proposition \ref{caractidem2}. Furthermore,  we prove that if $\Phi:\mathcal{C}\longrightarrow \mathcal{C}/\mathcal{I}$ is a homological epimorphism then $\Phi^{op}\otimes \Phi:\mathcal{C}^{op}\otimes \mathcal{C}\longrightarrow (\mathcal{C}/\mathcal{I})^{op}\otimes \mathcal{C}/\mathcal{I}$ is a homological epimorphism; see Proposition \ref{piehomoloepi}.\\
In Section 4, we prove our main result that given an idempotent ideal $\mathcal{I}$ of $\mathcal{C}$ such that $\mathcal{I}(C,-)$ is projective in $\mathrm{Mod}(\mathcal{C})$ for all $C\in \mathcal{C}$, then $\Phi:\mathcal{C}\longrightarrow \mathcal{C}/\mathcal{I}$ is a homological epimorphism and the Hochschild-Mitchell cohomology of $\mathcal{C}$ and $\mathcal{C}/\mathcal{I}$ can be connected in a long exact sequence (see Theorem \ref{episuclarga}).\\
In Section, 5 we apply our main result to study the Hochschild-Mitchell Cohomology of triangular matrix category  $\Lambda=\left[ \begin{smallmatrix}
\mathcal{T} & 0 \\ 
M & \mathcal{U}
\end{smallmatrix}\right]$, and show that the Hochschild-Mitchell cohomologies $H^{i}(\Lambda)$  and $H^{i}(\mathcal{U})$ can be connected by a long exact sequence; (see Theorem \ref{Michelena}). We also construct a long exact sequence for the one-point extension category; this result is a generalization of a well-known result of D. Happel as seen in Corollary \ref{longHappel}.  In this section, we prove that when $\mathrm{Mod}(\mathcal{C})$ is a hereditary category, there is a bijection between TTF triples in $\mathrm{Mod}(\mathcal{C})$ and homological epimorphisms of the form  $\Phi:\mathcal{C}\longrightarrow \mathcal{C}/\mathcal{I}$ (see Proposition \ref{homottf}). We also prove that a torsion free class $\mathcal{F}$ in a $K$-category induces a homological epimorphism $\pi:\mathcal{C}\longrightarrow \mathcal{C}/\mathcal{I}_{\mathcal{F}}$ (see Proposition \ref{homotorsion}). Finally, we show that certain recollements of abelian categories induces recollement of derived categories (see Theorem \ref{recolleabel}).

\section{Preliminaries}
Throughout this paper we will consider small $K$-categories  $\mathcal{C}$ over a field $K$, which means that the class of objects of $\mathcal{C}$ forms a set, the morphisms set $\mathrm{Hom}_{\mathcal{C}}(X,Y)$ is a $K$-vector space and the composition of morphisms is $K$-bilinear.  For conciseness, we will sometimes write $\mathcal{C}(X,Y)$ instead of $\mathrm{Hom}_{\mathcal{C}}(X,Y)$. Furthermore, we refer to \cite{Mitchelring} for basic properties of $K$-categories.\\
Let $\mathcal{A}$ and $\mathcal{B}$ be $K$-categories a covariant $K$-functor is funtor $F:\mathcal{A}\rightarrow \mathcal{B}$ such that $F:\mathcal{A}(X,Y)\rightarrow \mathcal{B}(F(X),F(Y))$ is a $K$-linear transformation.
For $K$-categories $\mathcal{A}$ and  $\mathcal{B}$, we consider the category of all the covariant $K$-functors, which we denote by $\mathrm{Fun}_{K}(\mathcal{A},\mathcal{B})$. Given an arbitrary small additive category $\mathcal{C}$, the category of all additive covariant functors  $\mathrm{Fun}_{\mathbb{Z}}(\mathcal{C},\mathbf{Ab})$ is denoted by $\mathrm{Mod}(\mathcal{C})$ and is called the category of left $\mathcal{C}$-modules. When
 $\mathcal{C}$ is a $K$-category, there is an isomorphism of categories $\mathrm{Fun}_{\mathbb{Z}}(\mathcal{C},\mathbf{Ab})\simeq \mathrm{Fun}_{K}(\mathcal{C},\mathrm{Mod}(K))$ where $\mathrm{Mod}(K)$ denotes the category of $K$-vector spaces. Thus, we can identify $\mathrm{Mod}(\mathcal{C})$ with $\mathrm{Fun}_{K}(\mathcal{C},\mathrm{Mod}(K))$.  If $\mathcal{C}$ is a $K$-category, we always consider its opposite  category $\mathcal{C}^{op}$, which is also a $K$-category; and we construct the category of right $\mathcal{C}$-modules $\mathrm{Mod}(\mathcal{C}^{op}):=\mathrm{Fun}_{K}(\mathcal{C}^{op},\mathrm{Mod}(K))$. It is well-known that $\mathrm{Mod}(\mathcal{C})$ is an abelian category with enough projectives and injectives; see for example,\cite[Proposition 2.3]{MitBook} on page 99 and also page 102 in \cite{MitBook}).\\
If $\mathcal{C}$ and $\mathcal{D}$ are $K$-categories, B. Mitchell defined in \cite{Mitchelring} the $K$-category tensor product  $\mathcal{C}\otimes_{K}\mathcal{D}$ with objects that are those of $\mathcal{C}\times \mathcal{D}$, and the set of morphisms from $(C,D)$ to $(C',D')$ is the tensor product of $K$-vector spaces $\mathcal{C}(C,C')\otimes_{K}\mathcal{D}(D,D')$. The $K$-bilinear composition in $\mathcal{C}\otimes_{K} \mathcal{D}$ is given as follows: $(f_{2}\otimes g_{2})\circ (f_{1}\otimes g_{1}):=(f_{2}\circ f_{1})\otimes(g_{2}\circ g_{1})$ 
for all $f_{1}\otimes g_1\in \mathcal{C}(C,C')\otimes \mathcal{D}(D,D')$ and  $f_{2}\otimes g_2\in\mathcal{C}(C',C'')\otimes_{K} \mathcal{D}(D',D'')$.\\
Now we recall an important construction given in  \cite{Mitchelring} on p. 26 that will be used throughout this paper. Let $\mathcal{C}$ and $\mathcal{A}$ be  $K$-categories where $\mathcal{A}$ is cocomplete. The evaluation $K$-functor $E:\mathrm{Fun}_{K}(\mathcal{C},\mathcal{A})\otimes_{K}\mathcal{C}\longrightarrow \mathcal{A}$ can be extended to a $K$-functor
$$-\otimes_{\mathcal{C}}-:\mathrm{Fun}_{K}(\mathcal{C},\mathcal{A})\otimes_{K}\mathrm{Mod}(\mathcal{C}^{op})\longrightarrow \mathcal{A}.$$
By definition, we have an isomorphism $F\otimes_{\mathcal{C}}\mathcal{C}(-,X)\simeq F(X)$ for all $X\in \mathcal{C}$,
which is natural in $F$ and $X$.  Let $\mathcal{A}$ and $\mathcal{C}$ be two  $K$-categories. 
There exists an isomorphism of abelian groups (see formula (2) on page 26 in \cite{Mitchelring})
\begin{equation}
\mathrm{Hom}_{\mathcal{A}}\Big(F\otimes_{\mathcal{C}}G,A\Big)\simeq \mathrm{Hom}_{\mathrm{Mod}(\mathcal{C}^{op})}\Big(G,\mathrm{Hom}_{\mathcal{A}}(F,A)\Big),
\end{equation}
which is natural for $F\in \mathrm{Fun}_{K}(\mathcal{C},\mathcal{A})$, $G\in \mathrm{Mod}(\mathcal{C}^{op})$ and $A\in \mathcal{A}$. Here $\mathrm{Hom}_{\mathcal{A}}(F,A)$ denotes the functor $\mathrm{Hom}_{\mathcal{A}}(F,A):\mathcal{C}^{op}\rightarrow \mathrm{Mod}(K)$ with value at $C$ that is the $K$-vector space $\mathrm{Hom}_{\mathcal{A}}(F(C),A)$.\\
We have the following construction.  Let $\mathcal{A},\mathcal{C}$ and $\mathcal{D}$ be three $K$-categories; and consider $F\in \mathrm{Fun}_{K}(\mathcal{C},\mathcal{A})$ and $G\in \mathrm{Mod}(\mathcal{C}^{op}\otimes_{K}\mathcal{D})$  where $\mathcal{A}$ is complete and cocomplete. We define a functor
$$F\boxtimes_{\mathcal{C}}G:\mathcal{D}\longrightarrow \mathcal{A}$$
as follows: $(F\boxtimes_{\mathcal{C}} G)(D):=F\otimes_{\mathcal{C}} G(-,D)\quad \forall D\in \mathcal{D},$.
We recall the isomorphism given in the formula $(4)$ on page 28 in \cite{Mitchelring}.\\
For $H\in  \mathrm{Fun}_{K}(\mathcal{D},\mathcal{A})$, there exists an isomorphism of $K$-vector spaces
\begin{equation}\label{certainadjun}
\mathrm{Hom}_{\mathrm{Fun}_{K}(\mathcal{D},\mathcal{A})}(F\boxtimes_{\mathcal{C}}G,H)\simeq  \mathrm{Hom}_{\mathrm{Mod}(\mathcal{C}^{op}\otimes_{K} \mathcal{D})}\Big(G, \mathrm{Hom}_{\mathcal{A}}(F,H)\Big).
\end{equation}
Here $\mathrm{Hom}_{\mathcal{A}}(F,H)$ denotes the functor $\mathrm{Hom}_{\mathcal{A}}(F,H):\mathcal{C}^{op}\otimes_{K}\mathcal{D}\rightarrow \mathrm{Mod}(K)$ whose value at $(C,D)$ is the $K$-vector space $\mathrm{Hom}_{\mathcal{A}}(F(C),H(D))$.  Now, for convenience of the reader we recall the following Proposition.
\begin{proposition}$\textnormal{\cite[Proposition 11.6]{Mitchelring}}$ \label{prodtenproy}
Let $\mathcal{A}$, $\mathcal{B}$ and $\mathcal{C}$ be three $K$-categories.
Let $F\in \mathrm{Fun}_{K}(\mathcal{C},\mathcal{A})$ and  $G\in \mathrm{Mod}(C^{op}\otimes_{K}\mathcal{D})$ where $\mathcal{A}$ is an abelian category with coproducts. Consider the following conditions:
 \begin{enumerate}
 \item [(a)] $G$ is projective in $\mathrm{Mod}(C^{op}\otimes_{K}\mathcal{D})$ and  $F(C)$ is projective in $\mathcal{A}$ for all $C\in \mathcal{C}$;
 
 \item [(b)] For all $C\in \mathcal{C}$ the functor $G(C,-):\mathcal{D}\longrightarrow \textbf{Ab}$ is projective in $\mathrm{Mod}(\mathcal{D})$, $F$ is projective in $\mathrm{Fun}_{K}(\mathcal{C},\mathcal{A})$ and $\mathcal{A}$ is an $AB4^{\ast}$-category.
 \end{enumerate}
 If one of the above conditions hold, then $F\boxtimes_{\mathcal{C}} G$ is a projective object in $\mathrm{Fun}_{K}(\mathcal{D},\mathcal{A})$.
 \end{proposition}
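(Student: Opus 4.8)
The plan is to use the adjunction isomorphism \eqref{certainadjun} to move the question into the module categories in which projectivity of $F$ and of $G$ lives. Recall that an object $P$ of an abelian category is projective exactly when $\mathrm{Hom}(P,-)$ is exact, and as $\mathrm{Hom}(P,-)$ is automatically left exact it suffices to see that it preserves epimorphisms. Hence everything reduces to proving that
\[
\mathrm{Hom}_{\mathrm{Fun}_{K}(\mathcal{D},\mathcal{A})}(F\boxtimes_{\mathcal{C}}G,\,-)\colon\ \mathrm{Fun}_{K}(\mathcal{D},\mathcal{A})\longrightarrow\mathbf{Ab}
\]
is exact. By the natural isomorphism \eqref{certainadjun} this functor is isomorphic to
\[
H\longmapsto\mathrm{Hom}_{\mathrm{Mod}(\mathcal{C}^{op}\otimes_{K}\mathcal{D})}\bigl(G,\,\mathrm{Hom}_{\mathcal{A}}(F,H)\bigr),
\]
and throughout I would use that exactness in a functor category such as $\mathrm{Mod}(\mathcal{C}^{op}\otimes_{K}\mathcal{D})$ or $\mathrm{Fun}_{K}(\mathcal{D},\mathcal{A})$ is detected objectwise.

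Under hypothesis (a) I would factor this as a composite of two exact functors. The functor $\mathrm{Hom}_{\mathcal{A}}(F,-)\colon\mathrm{Fun}_{K}(\mathcal{D},\mathcal{A})\to\mathrm{Mod}(\mathcal{C}^{op}\otimes_{K}\mathcal{D})$ is exact: a short exact sequence $0\to H'\to H\to H''\to 0$ is objectwise exact, so applying $\mathrm{Hom}_{\mathcal{A}}(F(C),-)$ to $0\to H'(D)\to H(D)\to H''(D)\to 0$ stays exact because $F(C)$ is projective in $\mathcal{A}$; as exactness in $\mathrm{Mod}(\mathcal{C}^{op}\otimes_{K}\mathcal{D})$ is objectwise, $\mathrm{Hom}_{\mathcal{A}}(F,-)$ is exact. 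Composing with $\mathrm{Hom}_{\mathrm{Mod}(\mathcal{C}^{op}\otimes_{K}\mathcal{D})}(G,-)$, which is exact since $G$ is projective, gives an exact functor, so $F\boxtimes_{\mathcal{C}}G$ is projective and (a) is settled.

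Under hypothesis (b) this factorization fails, since neither $\mathrm{Hom}_{\mathcal{A}}(F,-)$ (the $F(C)$ need not be projective) nor $\mathrm{Hom}_{\mathrm{Mod}(\mathcal{C}^{op}\otimes_{K}\mathcal{D})}(G,-)$ (the bimodule $G$ need not be projective) is separately exact, so the hypotheses on $F$ and $G$ must be used at the same time. The idea I would pursue is to re-associate the iterated Hom so that the test object is transferred into $\mathrm{Fun}_{K}(\mathcal{C},\mathcal{A})$, where $F$ is projective. Viewing $G$ as a functor $\mathcal{C}^{op}\to\mathrm{Mod}(\mathcal{D})$, $C\mapsto G(C,-)$, I would establish a natural isomorphism
\[
\mathrm{Hom}_{\mathrm{Mod}(\mathcal{C}^{op}\otimes_{K}\mathcal{D})}\bigl(G,\,\mathrm{Hom}_{\mathcal{A}}(F,H)\bigr)\cong\mathrm{Hom}_{\mathrm{Fun}_{K}(\mathcal{C},\mathcal{A})}\bigl(F,\,\langle G,H\rangle\bigr),
\]
where $\langle G,H\rangle\in\mathrm{Fun}_{K}(\mathcal{C},\mathcal{A})$ is the cotensor functor $C\mapsto\int_{D\in\mathcal{D}}\pitchfork\bigl(G(C,D),H(D)\bigr)$, with $\pitchfork(V,A)$ the cotensor of $A\in\mathcal{A}$ by the $K$-vector space $V$; this makes sense because $\mathcal{A}$ is complete. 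Both sides classify, naturally, the $K$-linear families $G(C,D)\to\mathrm{Hom}_{\mathcal{A}}(F(C),H(D))$, so the isomorphism is a formal Yoneda-type computation.

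Granting this reassociation, projectivity of $F$ makes $\mathrm{Hom}_{\mathrm{Fun}_{K}(\mathcal{C},\mathcal{A})}(F,-)$ exact, so it only remains to check that $H\mapsto\langle G,H\rangle$ is exact, which may be tested objectwise at each $C$, i.e. that $H\mapsto\int_{D}\pitchfork(G(C,D),H(D))$ is exact. This is where the two remaining hypotheses combine: $G(C,-)$ is projective in $\mathrm{Mod}(\mathcal{D})$, hence a direct summand of a coproduct of representables $\mathcal{D}(D_{j},-)$; the cotensor sends $\mathcal{D}(D_{j},-)$ to the evaluation $H\mapsto H(D_{j})$ and turns the coproduct in the module variable into the product $\prod_{j}H(D_{j})$ in $\mathcal{A}$, which is exact precisely because $\mathcal{A}$ is $AB4^{\ast}$; a direct summand of an exact functor is exact, so $H\mapsto\langle G,H\rangle$ is exact and $F\boxtimes_{\mathcal{C}}G$ is projective. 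The main obstacle is this case (b): constructing the cotensor reassociation with its naturality and checking exactness of cotensoring by a projective $\mathcal{D}$-module --- which is exactly the step that forces, and uses, the $AB4^{\ast}$ axiom, since it converts the coproduct of representables presenting the projective module into a product. Case (a), by contrast, is the routine composition of exact functors.
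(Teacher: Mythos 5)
Your proof is correct, but be aware that there is no in-paper proof to compare it against: the paper states this proposition without argument, quoting it verbatim from Mitchell (\cite[Proposition 11.6]{Mitchelring}). Your route is the natural one and is, in substance, Mitchell's. Case (a) is exactly the routine factorization of $\mathrm{Hom}_{\mathrm{Fun}_{K}(\mathcal{D},\mathcal{A})}(F\boxtimes_{\mathcal{C}}G,-)$ through the adjunction \eqref{certainadjun} into the two exact functors $\mathrm{Hom}_{\mathcal{A}}(F,-)$ (objectwise exact since each $F(C)$ is projective) and $\mathrm{Hom}_{\mathrm{Mod}(\mathcal{C}^{op}\otimes_{K}\mathcal{D})}(G,-)$. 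In case (b), the cotensor-end functor $\langle G,H\rangle$ you construct is precisely Mitchell's symbolic hom $\mathbb{HOM}_{\mathcal{D}}(G,H)$ from p. 30 of \cite{Mitchelring} --- the very object this paper invokes later in Proposition \ref{formula2.8aEilenberg} --- and your reassociation isomorphism is the standard adjunction relating $\boxtimes$ to the symbolic hom, so your argument reconstructs the intended one. Two points are worth making explicit to fully close case (b): first, the existence of the cotensors and of the end defining $\langle G,H\rangle$ requires products in $\mathcal{A}$, which under hypothesis (b) are supplied by the $AB3^{*}$ part of $AB4^{*}$ (completeness then follows since $\mathcal{A}$ is abelian, justifying your appeal to it); second, the splitting of $G(C,-)$ off a coproduct of representables $\bigoplus_{j}\mathcal{D}(D_{j},-)$ is chosen for each fixed $C$ separately, with no naturality in $C$ needed --- which is legitimate precisely because exactness of $\langle G,-\rangle$ is tested objectwise, and is indeed how you use it. Granting these, your chain of reductions --- $\langle G,-\rangle(C)$ is a direct summand, naturally in $H$, of $H\mapsto\prod_{j}H(D_{j})$, the latter exact exactly by $AB4^{*}$, and $\mathrm{Hom}_{\mathrm{Fun}_{K}(\mathcal{C},\mathcal{A})}(F,-)$ exact by projectivity of $F$ --- yields a complete and correct proof.
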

 
Now, by considering the field $K$, we construct a $K$-category $\mathcal{C}_{K}$ with only one object (that is,  $\mathrm{Obj}(\mathcal{C}_{K})=\{\ast\}$).
For a given $K$-category $\mathcal{C}$, there exists two natural isomorphisms of categories
$$\Phi:\mathrm{Fun}_{K}(\mathcal{C}_{K},\mathcal{C})\rightarrow \mathcal{C},\quad \quad \Delta:\mathcal{C}_{K}^{op}\otimes_{K}\mathcal{C}\rightarrow \mathcal{C}.$$
Given $C\in \mathcal{C}$ we denote by $\overline{C}:\mathcal{C}_{K}\rightarrow \mathcal{C}$ the $K$-functor such that $\Phi(\overline{C})=C$. Given $G\in \mathrm{Mod}(\mathcal{C})$ we denote by  $\underline{G}:\mathcal{C}_{K}^{op}\otimes_{K}\mathcal{C}\rightarrow \mathrm{Mod}(K)$ the functor $\underline{G}:=G\circ \Delta$. The above construction is needed for the following Definition.

\begin{definition}\label{prodtensorconcat}
Let $\mathcal{A}$ and $\mathcal{D}$ be two $K$-categories where $\mathcal{A}$ is complete and cocomplete. Let $A\in \mathcal{A}$ and $G\in \mathrm{Mod}(\mathcal{D})$. We define
$$A\ten_{K}G:=\overline{A}\boxtimes_{\mathcal{C}_{K}}\underline{G}:\mathcal{D}\longrightarrow \mathcal{A}$$
as follows: $(A\ten_{K} G)(D):=\overline{A}\otimes_{\mathcal{C}_{K}} \underline{G}(-,D)\quad \forall D\in \mathcal{D}.$
\end{definition}

Using the notation above, we recall the following result.
\begin{corollary}$\textnormal{\cite[Proposition 11.7]{Mitchelring}}$\label{prodipordopy}
Let $\mathcal{A}$ be an abelian $K$-category with coproductos. Let $A\in \mathcal{A}$ be projective in $\mathcal{A}$  and $G\in \mathrm{Mod}(\mathcal{D})$ projective in $\mathrm{Mod}(\mathcal{D})$. Thus $A\ten_{K}G$ is projective in $\mathrm{Fun}_{K}(\mathcal{D},\mathcal{A})$.
\end{corollary}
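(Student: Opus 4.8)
The plan is to deduce this directly from Proposition \ref{prodtenproy} by specializing the base category to the one-object category $\mathcal{C}_{K}$. Recall from Definition \ref{prodtensorconcat} that, by construction, $A\ten_{K}G=\overline{A}\boxtimes_{\mathcal{C}_{K}}\underline{G}$, where $\overline{A}\in\mathrm{Fun}_{K}(\mathcal{C}_{K},\mathcal{A})$ is the $K$-functor with $\Phi(\overline{A})=A$ and $\underline{G}=G\circ\Delta\in\mathrm{Mod}(\mathcal{C}_{K}^{op}\otimes_{K}\mathcal{D})$. Thus it suffices to verify the hypotheses of Proposition \ref{prodtenproy}(a) with $\mathcal{C}=\mathcal{C}_{K}$, with $F=\overline{A}$, and with the role of the bimodule played by $\underline{G}$; the conclusion of that proposition then reads exactly that $\overline{A}\boxtimes_{\mathcal{C}_{K}}\underline{G}=A\ten_{K}G$ is projective in $\mathrm{Fun}_{K}(\mathcal{D},\mathcal{A})$.

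First I would check the condition on $F=\overline{A}$. Since $\mathcal{C}_{K}$ has a single object $\ast$, the requirement that $\overline{A}(C)$ be projective in $\mathcal{A}$ for every $C\in\mathcal{C}_{K}$ collapses to the single assertion that $\overline{A}(\ast)=A$ is projective in $\mathcal{A}$, which is true by hypothesis.

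The key step is to verify that $\underline{G}$ is projective in $\mathrm{Mod}(\mathcal{C}_{K}^{op}\otimes_{K}\mathcal{D})$. For this I would exploit that $\Delta:\mathcal{C}_{K}^{op}\otimes_{K}\mathcal{D}\rightarrow\mathcal{D}$ is an isomorphism of $K$-categories. Precomposition with $\Delta$ therefore induces an isomorphism of abelian categories $(-)\circ\Delta:\mathrm{Mod}(\mathcal{D})\rightarrow\mathrm{Mod}(\mathcal{C}_{K}^{op}\otimes_{K}\mathcal{D})$, whose inverse is precomposition with $\Delta^{-1}$; since kernels and cokernels in these functor categories are computed objectwise, this isomorphism is exact and hence preserves (and reflects) projective objects. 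As $G$ is projective in $\mathrm{Mod}(\mathcal{D})$ by hypothesis, its image $\underline{G}=G\circ\Delta$ is projective in $\mathrm{Mod}(\mathcal{C}_{K}^{op}\otimes_{K}\mathcal{D})$.

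With both hypotheses of Proposition \ref{prodtenproy}(a) confirmed, I conclude that $\overline{A}\boxtimes_{\mathcal{C}_{K}}\underline{G}$ is projective in $\mathrm{Fun}_{K}(\mathcal{D},\mathcal{A})$, which by the identification above is precisely $A\ten_{K}G$. The only point demanding genuine care is the transfer of projectivity along $\Delta$ and the unwinding of the definitions of $\overline{A}$ and $\underline{G}$; once those are made explicit, the statement is a direct substitution into the cited proposition, so I expect no substantial obstacle.
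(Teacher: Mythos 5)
Your proof is correct and is precisely the intended derivation: the paper states this as a corollary of Proposition \ref{prodtenproy} (citing Mitchell's Proposition 11.7, which follows from his Proposition 11.6 in the same way), and your specialization $\mathcal{C}=\mathcal{C}_{K}$, $F=\overline{A}$, $G=\underline{G}$, together with the transfer of projectivity along the isomorphism $\Delta$, is exactly that argument. No gaps.
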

By Corollary \ref{prodipordopy}, we get the following result.

\begin{proposition}\label{projinenvelop}
Let $\mathcal{C}$ be a $K$-category. Let $M\in \mathrm{Mod}(\mathcal{C}^{op})$ be projective in $\mathrm{Mod}(\mathcal{C}^{op})$ and $N\in \mathrm{Mod}(\mathcal{C})$ projective in $\mathrm{Mod}(\mathcal{C})$.
Therefore, $M\ten_{K}N$ is projective in $\mathrm{Mod}(\mathcal{C}^{op}\otimes_{K} \mathcal{C})$. Moreover, we have that
$(M\ten_{K}N)(C',C)\simeq M(C')\otimes_{K}N(C)$ for all $(C',C)\in \mathcal{C}^{op}\otimes_{K} \mathcal{C}$.
\end{proposition}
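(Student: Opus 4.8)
The plan is to reduce everything to Corollary \ref{prodipordopy} together with a currying isomorphism of functor categories. The starting observation is that, taking $\mathcal{A}=\mathrm{Mod}(\mathcal{C}^{op})$ and $\mathcal{D}=\mathcal{C}$ in Definition \ref{prodtensorconcat}, the object $M\ten_{K}N=\overline{M}\boxtimes_{\mathcal{C}_{K}}\underline{N}$ lives \emph{a priori} in $\mathrm{Fun}_{K}(\mathcal{C},\mathrm{Mod}(\mathcal{C}^{op}))$, not in $\mathrm{Mod}(\mathcal{C}^{op}\otimes_{K}\mathcal{C})$. So first I would record the natural isomorphism of $K$-categories
$$\mathrm{Fun}_{K}\bigl(\mathcal{C},\mathrm{Mod}(\mathcal{C}^{op})\bigr)\simeq \mathrm{Mod}(\mathcal{C}^{op}\otimes_{K}\mathcal{C}),$$
which sends a functor $F$ to the functor $(C',C)\mapsto F(C)(C')$. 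This is the standard uncurrying adjunction $\mathrm{Fun}_{K}(\mathcal{C},\mathrm{Fun}_{K}(\mathcal{C}^{op},\mathrm{Mod}(K)))\simeq \mathrm{Fun}_{K}(\mathcal{C}\otimes_{K}\mathcal{C}^{op},\mathrm{Mod}(K))$, composed with the symmetry isomorphism $\mathcal{C}\otimes_{K}\mathcal{C}^{op}\simeq \mathcal{C}^{op}\otimes_{K}\mathcal{C}$ of the $K$-category tensor product. Being an isomorphism of categories, this functor preserves and reflects projectivity.

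Next I would apply Corollary \ref{prodipordopy} directly with $\mathcal{A}=\mathrm{Mod}(\mathcal{C}^{op})$ (which is abelian, $K$-linear and cocomplete, as recalled in Section~2), with $A=M$ projective in $\mathrm{Mod}(\mathcal{C}^{op})$, $\mathcal{D}=\mathcal{C}$, and $G=N$ projective in $\mathrm{Mod}(\mathcal{C})$. This gives at once that $M\ten_{K}N$ is projective in $\mathrm{Fun}_{K}(\mathcal{C},\mathrm{Mod}(\mathcal{C}^{op}))$; transporting along the isomorphism of the previous paragraph yields that $M\ten_{K}N$ is projective in $\mathrm{Mod}(\mathcal{C}^{op}\otimes_{K}\mathcal{C})$, which is the first assertion.

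It remains to compute the value. By Definition \ref{prodtensorconcat} we have $(M\ten_{K}N)(C)=\overline{M}\otimes_{\mathcal{C}_{K}}\underline{N}(-,C)$, where $\overline{M}\colon\mathcal{C}_{K}\to \mathrm{Mod}(\mathcal{C}^{op})$ has value $M$ at the unique object $\ast$, and $\underline{N}(-,C)\in\mathrm{Mod}(\mathcal{C}_{K}^{op})$ is the right $\mathcal{C}_{K}$-module whose value at $\ast$ is the $K$-vector space $N(C)$. Since $\mathcal{C}_{K}$ has a single object $\ast$ with $\mathcal{C}_{K}(\ast,\ast)=K$, the evaluation isomorphism $F\otimes_{\mathcal{C}}\mathcal{C}(-,X)\simeq F(X)$ gives $\overline{M}\otimes_{\mathcal{C}_{K}}\mathcal{C}_{K}(-,\ast)\simeq M$; writing $N(C)\simeq K^{(I)}$ and using that $-\otimes_{\mathcal{C}_{K}}-$ is a left adjoint in its second variable (by the adjunction isomorphism (1)) and hence commutes with coproducts, I would obtain $\overline{M}\otimes_{\mathcal{C}_{K}}\underline{N}(-,C)\simeq M\otimes_{K}N(C)$ in $\mathrm{Mod}(\mathcal{C}^{op})$, where $M\otimes_{K}N(C)$ denotes the functor $C'\mapsto M(C')\otimes_{K}N(C)$. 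Evaluating this $\mathcal{C}^{op}$-module at $C'$ and invoking the identification $(M\ten_{K}N)(C',C)=(M\ten_{K}N)(C)(C')$ from the first paragraph gives $(M\ten_{K}N)(C',C)\simeq M(C')\otimes_{K}N(C)$, as claimed.

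The routine bookkeeping aside, the only genuinely delicate point is the first paragraph: making the uncurrying isomorphism $K$-linear and compatible with the symmetry $\mathcal{C}\otimes_{K}\mathcal{C}^{op}\simeq \mathcal{C}^{op}\otimes_{K}\mathcal{C}$ explicit enough that $(C',C)\mapsto F(C)(C')$ is precisely the identification under which $M\ten_{K}N$ is to be read as an object of $\mathrm{Mod}(\mathcal{C}^{op}\otimes_{K}\mathcal{C})$. Once that identification is pinned down, both the projectivity and the value formula follow formally from the results already available.
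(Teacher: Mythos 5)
Your proposal is correct and takes essentially the same route as the paper: the paper obtains this proposition directly from Corollary \ref{prodipordopy} (Mitchell's Proposition 11.7), applied with $\mathcal{A}=\mathrm{Mod}(\mathcal{C}^{op})$, $A=M$, $\mathcal{D}=\mathcal{C}$ and $G=N$, which is exactly your second paragraph. The currying identification $\mathrm{Fun}_{K}\bigl(\mathcal{C},\mathrm{Mod}(\mathcal{C}^{op})\bigr)\simeq\mathrm{Mod}(\mathcal{C}^{op}\otimes_{K}\mathcal{C})$ and the computation $(M\ten_{K}N)(C',C)\simeq M(C')\otimes_{K}N(C)$, which the paper leaves implicit, are filled in correctly in your write-up.
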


\subsection{Hochschild cohomology}
The following definition can be found on page 56 in \cite{Mitchelring}.
\begin{definition}
Let $\mathcal{C}$ be a $K$-category.  The $\textbf{enveloping category}$ of $\mathcal{C}$, denoted by $\mathcal{C}^{e}$, is defined as $\mathcal{C}^{e}:=\mathcal{C}^{op}\otimes_{K}\mathcal{C}.$
\end{definition}
We can see $\mathcal{C}$ as an element in $\mathrm{Mod}(\mathcal{C}^{e})$. We have the following construction given in Mitchell's article \cite{Mitchelring}. For a $K$-category $\mathcal{C}$, we consider the complex $\mathbb{S}(\mathcal{C})$ in $\mathrm{Mod
}(\mathcal{C}^{e})$ whith $n$-th term that is $0$ for $n\geq -1$ and for $n\geq 1$ it is given by 

\begin{align*}
\mathbb{S}_{n}(\mathcal{C})& :=\!\!\!\!\!\!\!\!\bigoplus_{(p_{1},\dots, p_{n+1})}\mathcal{C}(-,p_{1})\ten_{K}\Big(\Big(
\mathcal{C}(p_{1},p_{2})\otimes_{K}\cdots  \otimes_{K}\mathcal{C}(p_{n},p_{n+1})\Big)\ten_{K}\mathcal{C}(p_{n+1},-)\Big),
\end{align*}
where the coproduct ranges over all $n+1$-fold sequences of objects in $\mathcal{C}$.\\
We have that $\mathcal{C}(-,p_{1})\ten_{K}\Big(\Big(
\mathcal{C}(p_{1},p_{2})\otimes_{K}\cdots  \otimes_{K}\mathcal{C}(p_{n},p_{n+1})\Big)\ten_{K}\mathcal{C}(p_{n+1},-)\Big)\in \mathrm{Mod}(\mathcal{C}^{e})$. Now, we describe the differential $d_{n}:\mathbb{S}_{n}(\mathcal{C})\longrightarrow \mathbb{S}_{n-1}(\mathcal{C})$. 

For  an element $\alpha_{0}\otimes \alpha_{1}\otimes \cdots\otimes \alpha_{n+1} \in  \Big(\mathbb{S}_{n}(\mathcal{C})\Big)(C',C)$, we define
$$[d_{n}]_{(C',C)}\Big(\alpha_{0}\otimes \alpha_{1}\otimes \cdots\otimes \alpha_{n+1} \Big):=\sum_{i=0}^{n}(-1)^{i}\Big(\alpha_{0}\otimes \cdots\otimes (\alpha_{i+1}\circ \alpha_{i})\otimes \cdots\otimes \alpha_{n+1}\Big).$$
Since $K$ is a field, we have that $\mathbb{S}_{n}(\mathcal{C})$ is projective in $\mathrm{Mod}(\mathcal{C}^{e})$ and
so in this case $(\mathbb{S}(\mathcal{C}),d)$
is a projective resolution of $\mathcal{C}$ in $\mathrm{Mod}(\mathcal{C}^{e})$ (see page 71 in \cite{Mitchelring}). This resolution is called the $\textbf{standard resolution}$ of $\mathcal{C}$. We have the following Proposition.
\begin{proposition}\label{lemaduda}
Let $G\in \mathrm{Mod}(\mathcal{C}^{op}\otimes_{K} \mathcal{D})$ be such that $G(C,-)$ is projective in $\mathrm{Mod}(\mathcal{D})$ for all $C\in \mathcal{C}^{op}$.  Consider the standard projective resolution  $\Big(\mathbb{S}(\mathcal{C}),d\Big)$ of $\mathcal{C}$. Then  $G\boxtimes_{\mathcal{C}^{op}}\mathbb{S}(\mathcal{C})$ is a projective resolution of $G\boxtimes_{\mathcal{C}^{op}} \mathcal{C}\simeq G$ in $\mathrm{Mod}(\mathcal{C}^{op}\otimes_{K} \mathcal{D})$
\end{proposition}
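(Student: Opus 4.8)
The plan is to unpack the three assertions bundled into the statement and verify them in order: that the target $G\boxtimes_{\mathcal{C}^{op}}\mathcal{C}$ really is $G$, that each term $G\boxtimes_{\mathcal{C}^{op}}\mathbb{S}_{n}(\mathcal{C})$ is projective, and that the augmented complex $G\boxtimes_{\mathcal{C}^{op}}\mathbb{S}(\mathcal{C})\to G$ stays exact. The first two are formal consequences of the machinery already set up; the exactness is the crux and I treat it last.

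First, for the identification $G\boxtimes_{\mathcal{C}^{op}}\mathcal{C}\simeq G$, I would regard $G$ as an object of $\mathrm{Fun}_{K}(\mathcal{C}^{op},\mathrm{Mod}(\mathcal{D}))$ and $\mathcal{C}=\mathcal{C}(-,-)$ as the identity bimodule in $\mathrm{Mod}(\mathcal{C}^{op}\otimes_{K}\mathcal{C})$, so that $(G\boxtimes_{\mathcal{C}^{op}}\mathcal{C})(C)=G\otimes_{\mathcal{C}^{op}}\mathcal{C}(C,-)$ for $C\in\mathcal{C}^{op}$. Fixing that output variable, the restriction of $\mathcal{C}(-,-)$ is exactly the representable module $(\mathcal{C}^{op})(-,C)$, so the fundamental isomorphism $F\otimes_{\mathcal{C}}\mathcal{C}(-,X)\simeq F(X)$, applied with $\mathcal{C}$ replaced by $\mathcal{C}^{op}$, gives $(G\boxtimes_{\mathcal{C}^{op}}\mathcal{C})(C)\simeq G(C,-)$, naturally in $C$. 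Hence $G\boxtimes_{\mathcal{C}^{op}}\mathcal{C}\simeq G$ in $\mathrm{Mod}(\mathcal{C}^{op}\otimes_{K}\mathcal{D})$, which also identifies the augmentation target.

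Second, for projectivity of the terms I would apply Proposition \ref{prodtenproy}(a), taking the base category to be $\mathcal{C}^{op}$, the first functor to be $F:=G$ (so $\mathcal{A}=\mathrm{Mod}(\mathcal{D})$, abelian with coproducts), and the second argument to be $\mathbb{S}_{n}(\mathcal{C})$, viewed in $\mathrm{Mod}(\mathcal{C}\otimes_{K}\mathcal{C}^{op})$ after the canonical swap needed to match the shape of $\boxtimes_{\mathcal{C}^{op}}$. Condition (a) then requires $\mathbb{S}_{n}(\mathcal{C})$ to be projective in $\mathrm{Mod}(\mathcal{C}^{e})$, which holds because $K$ is a field (as recalled just before the statement, and preserved by the swap equivalence), and the values $G(C,-)$ to be projective in $\mathrm{Mod}(\mathcal{D})$, which is precisely our hypothesis. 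Note that condition (a) does not demand the $AB4^{\ast}$ property, so no further requirement on $\mathrm{Mod}(\mathcal{D})$ is needed. Thus every $G\boxtimes_{\mathcal{C}^{op}}\mathbb{S}_{n}(\mathcal{C})$ is projective in $\mathrm{Fun}_{K}(\mathcal{C}^{op},\mathrm{Mod}(\mathcal{D}))=\mathrm{Mod}(\mathcal{C}^{op}\otimes_{K}\mathcal{D})$.

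The main obstacle is exactness, where the mere right-exactness of the tensor product is not enough. Here I would exploit that $\mathbb{S}(\mathcal{C})$ is the categorical bar resolution: the augmented complex $\mathbb{S}(\mathcal{C})\to\mathcal{C}\to 0$ carries the usual contracting homotopy obtained by inserting an identity morphism at one of the two outer ends, and this homotopy can be taken linear in whichever of the two outer $\mathcal{C}$-variables one wishes. Choosing it linear in the variable consumed by $\otimes_{\mathcal{C}^{op}}$ guarantees that it descends through $G\boxtimes_{\mathcal{C}^{op}}-$. Since an additive functor sends a contractible complex to a contractible, hence exact, complex, the split-exactness of the bar complex is exactly what rescues us from the failure of $\boxtimes$ to be exact, and no flatness is required. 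Combined with the first and second steps, this shows $G\boxtimes_{\mathcal{C}^{op}}\mathbb{S}(\mathcal{C})\to G$ is a projective resolution. The delicate point in writing this out is the variance bookkeeping: tracking which of the two $\mathcal{C}$-variables of $\mathbb{S}(\mathcal{C})$ is consumed by the tensor product and confirming that the selected homotopy is linear in precisely that variable.
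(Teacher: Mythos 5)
Your proof is correct: the identification $G\boxtimes_{\mathcal{C}^{op}}\mathcal{C}\simeq G$ via the fundamental isomorphism $F\otimes_{\mathcal{C}}\mathcal{C}(-,X)\simeq F(X)$, the projectivity of each term via Proposition \ref{prodtenproy}(a) (which indeed needs no $AB4^{\ast}$ hypothesis), and the exactness via the one-sided contracting homotopy of the bar complex, taken natural in the variable consumed by $\otimes_{\mathcal{C}^{op}}$ so that it survives the application of the additive functor $G\otimes_{\mathcal{C}^{op}}-$ pointwise in the remaining variables, are all sound, and you correctly isolated the variance bookkeeping as the only delicate point. The paper gives no inline argument here---its proof is just the citation to p.~71 of \cite{Mitchelring}---and the argument found there is essentially the one you reconstructed, so your proposal matches the intended proof rather than offering a different route.
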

\begin{proof}
See p. 71 in \cite{Mitchelring}.
\end{proof}
The following is the generalization of Hochschild Cohomology to the setting for $K$-categories.
\begin{definition}\label{HochMit}
Let $\mathcal{C}$ be a $K$-category. The $n$-$\textbf{Hochschild-Mitchell cohomology}$ group of $\mathcal{C}$ is defined as $H^{n}(\mathcal{C}):=\mathrm{Ext}_{\mathrm{Mod}(\mathcal{C}^{e})}^{n}(\mathcal{C},\mathcal{C}).$
\end{definition}

\section{Homological epimorphisms in functor categories}

A  $\textbf{two}$ $\textbf{sided}$ $\textbf{ideal}$  $\mathcal{I}(-,?)$ of $\mathcal{C}$ is a $K$-subfunctor of the two variable functor $\mathcal{C}(-,?):\mathcal{C}^{op}\otimes_{K}\mathcal{C}\rightarrow\mathrm{Mod}(K)$ such that the following conditions hold: (a) if $f\in \mathcal{I}(X,Y)$ and $g\in\mathcal C(Y,Z)$, then  $gf\in \mathcal{I}(X,Z)$; and (b)
if $f\in \mathcal{I}(X,Y)$ and $h\in\mathcal{C}(U,X)$, then  $fh\in \mathcal{I}(U,Z)$. If $\mathcal{I}$ is a two-sided ideal,  we can form the $\textbf{quotient category}$  $\mathcal{C}/\mathcal{I}$ whose objects are those of $\mathcal{C}$ and where $(\mathcal{C}/\mathcal{I})(X,Y):=\mathcal{C}(X,Y)/\mathcal{I}(X,Y)$ and the composition is induced by that of $\mathcal{C}$ (see \cite{Mitchelring}). There is a canonical projection functor $\pi:\mathcal{C}\rightarrow \mathcal{C}/\mathcal{I}$ such that $\pi(X)=X$ for all $X\in \mathcal{C}$ and  $\pi(f)=f+\mathcal{I}(X,Y):=\bar{f}$ for all $f\in \mathcal{C}(X,Y)$. We also recall that there exists a canonical isomorphism of categories $(\mathcal{C}/\mathcal{I})^{op}\simeq \mathcal{C}^{op}/\mathcal{I}^{op}$.
We construct the following two functors 
$$\mathbb{D}_{\mathcal{C}}:\mathrm{Fun}_{K}(\mathcal{C},\mathrm{Mod}(K))\longrightarrow \mathrm{Fun}_{K} (\mathcal{C}^{op},\mathrm{Mod}(K))$$
$$\mathbb{D}_{\mathcal{C}^{op}}:\mathrm{Fun}_{K}(\mathcal{C}^{op},\mathrm{Mod}(K))\longrightarrow \mathrm{Fun}_{K}(\mathcal{C},\mathrm{Mod}(K)),$$
defined as $\mathbb{D}_{\mathcal{C}}(F):=\mathrm{Hom}_{K}(-,K)\circ F$ and similarly for
$\mathbb{D}_{\mathcal{C}^{op}}$.\\
Given an ideal $\mathcal{I}$ in $\mathcal{C}$, we will consider the canonical functors $\pi_{1}:\mathcal{C}\longrightarrow \mathcal{C}/\mathcal{I}$ and $\pi_{2}:\mathcal{C}^{op}\longrightarrow \mathcal{C}^{op}/\mathcal{I}^{op}$.  It is easy to show that we have functors
$$(\pi_{1})_{\ast}:\mathrm{Fun}_{K}(\mathcal{C}/\mathcal{I},\mathrm{Mod}(K))\longrightarrow \mathrm{Fun}_{K}(\mathcal{C},\mathrm{Mod}(K))$$
$$(\pi_{2})_{\ast}:\mathrm{Fun}_{K}(\mathcal{C}^{op}/\mathcal{I}^{op},\mathrm{Mod}(K))\longrightarrow\mathrm{Fun}_{K}(\mathcal{C}^{op},\mathrm{Mod}(K)),$$
such that  $\mathbb{D}_{\mathcal{C}^{op}}\circ (\pi_{2})_{\ast}=(\pi_{1})_{\ast}\circ \mathbb{D}_{(\mathcal{C}/\mathcal{I})^{op}}$.\\
Since we are following the notation given on page 26  in  \cite{Mitchelring}, for a $K$-category $\mathcal{C}$ we  
have the functor $-\otimes_{\mathcal{C}}-:\mathrm{Mod}(\mathcal{C})\otimes_{K} \mathrm{Mod}(\mathcal{C}^{op})\longrightarrow \mathrm{Mod}(K).$\\
In the paper \cite{RSS}, however,  the authors consider the functor $-\otimes_{\mathcal{C}}-:\mathrm{Mod}(\mathcal{C}^{op})\otimes_{K} \mathrm{Mod}(\mathcal{C})\longrightarrow \mathrm{Mod}(K).$ Hence, in the following propositions we will recall some results from \cite{RSS} but by using the notation $-\otimes_{\mathcal{C}^{op}}-$ instead of $-\otimes_{\mathcal{C}}-$, which was originally used in \cite{RSS}.
Therefore,  for $N\in \mathrm{Mod}(\mathcal{C}^{op})$  we consider the functor
$N\otimes_{\mathcal{C}^{op}}-:\mathrm{Mod}(\mathcal{C})\longrightarrow \mathrm{Mod}(K)$. We denote by $\mathrm{Tor}_{i}^{\mathcal{C}^{op}}(N,-):\mathrm{Mod}(\mathcal{C})\longrightarrow \mathrm{Mod}(K)$ the $i$-th left derived functor of $N\otimes_{\mathcal{C}^{op}} -$. For $M\in \mathrm{Mod}(\mathcal{C})$ we now denote by
$\mathrm{Ext}^{i}_{\mathrm{Mod}(\mathcal{C})}(M,-):\mathrm{Mod}(\mathcal{C})\longrightarrow \mathrm{Mod}(K)$ the $i$-th  derived functor of $\mathrm{Hom}_{\mathrm{Mod}(\mathcal{C})}(M,-):\mathrm{Mod}(\mathcal{C})\longrightarrow \mathrm{Mod}(K)$.\\

We recall the construction of the following functors given in \cite[Definition 3.9]{RSS} and \cite[Definition 3.10]{RSS}.
The functor $ \frac{\mathcal{C}}{\mathcal{I}}\otimes_{\mathcal{C}^{op}}-:\mathrm{Mod}(\mathcal{C})\longrightarrow\mathrm{Mod}(\mathcal{C}/\mathcal{I}) $  is given as follows: for $M\in \mathrm{Mod}(\mathcal{C})$ we set 
$\left( \frac{\mathcal{C}}{\mathcal{I}}\otimes _{\mathcal{C}^{op}}M\right)(C):= \frac{\mathcal{C}(-,C)}{\mathcal{I}(-,C)}\otimes_{\mathcal{C}^{op}}M$  for all $C\in \mathcal{C}/\mathcal{I}$. We also define the functor $\mathcal{C}(\frac{\mathcal{C}}{\mathcal{I}},-):\mathrm{Mod}\left(\mathcal{C}\right) \longrightarrow \mathrm{Mod}\left(\mathcal{C}/\mathcal{I}\right) $ as follows: for $ M\in \mathrm{Mod}(\mathcal{C})$ we set 
$\mathcal{C}(\frac{\mathcal{C}}{\mathcal{I}},M)(C)=\mathcal{C}\left(\frac{\mathcal{C}(C,-)}{\mathcal{I}(C,-)},M\right)$ for all $C\in \mathcal{C}/\mathcal{I}$.

\begin{definition}$\textnormal{\cite[Definition 3.15]{RSS}}$
We denote by $\mathbb{EXT}^{i}_{\mathcal{C}}(\mathcal{C}/\mathcal{I},-):\mathrm{Mod}(\mathcal{C})\rightarrow \mathrm{Mod}(\mathcal{C}/\mathcal{I})$ the $i$-th right derived functor of $\mathcal{C}(\frac{\mathcal{C}}{\mathcal{I}},-)$ and by $\mathbb{TOR}_{i}^{\mathcal{C}^{op}}(\mathcal{C}/\mathcal{I},-):\mathrm{Mod}(\mathcal{C})\rightarrow \mathrm{Mod}(\mathcal{C}/\mathcal{I})$ the $i$-th left derived functor of $ \frac{\mathcal{C}}{\mathcal{I}}\otimes_{\mathcal{C}^{op}}$.
\end{definition}

We have the following description of the above functors
\begin{remark}\label{descEXT}
Consider the functors $\mathbb{EXT}^{i}_{\mathcal{C}}(\mathcal{C}/\mathcal{I},-):\mathrm{Mod}(\mathcal{C})\longrightarrow \mathrm{Mod}(\mathcal{C}/\mathcal{I})$ and $\mathbb{TOR}_{i}^{\mathcal{C}^{op}}(\mathcal{C}/\mathcal{I},-):\mathrm{Mod}(\mathcal{C})\longrightarrow \mathrm{Mod}(\mathcal{C}/\mathcal{I})$. The following  holds true.
\begin{enumerate}
\item [(a)] For $M\in\mathrm{Mod}(\mathcal{C})$ we get that $\mathbb{EXT}^{i}_{\mathcal{C}}(\mathcal{C}/\mathcal{I},M)(C)=\mathrm{Ext}^{i}_{\mathrm{Mod}(\mathcal{C})}\left(\frac{\mathrm{Hom}_{\mathcal{C}}(C,-)}{\mathcal{I}(C,-)},M\right)$ for every $C\in \mathcal{C}/\mathcal{I}$.

\item [(b)] For $M\in\mathrm{Mod}(\mathcal{C})$ we have that $\mathbb{TOR}_{i}^{\mathcal{C}^{op}}(\mathcal{C}/\mathcal{I},M)(C)=\mathrm{Tor}_{i}^{\mathcal{C}^{op}}\left(\frac{\mathrm{Hom}_{\mathcal{C}}(-,C)}{\mathcal{I}(-,C)},M\right)$ for every $C\in \mathcal{C}/\mathcal{I}$. 

\end{enumerate}
\end{remark}
From Section 5 in \cite{RSS}, we obtain the following definition, which is a generalization of a notion given for artin algebras by Auslander-Platzeck-Todorov in  \cite{APG}. This notion also appears in  \cite{KoenigNagase} under the name of $\textbf{stratifying ideal}$.

\begin{definition}$\textnormal{\cite[Definition 5.1]{RSS}}$ \label{kidemcat}
Let $\mathcal{C}$ be a $K$-category  and $\mathcal{I}$ an ideal in $\mathcal{C}$.
We say that $\mathcal{I}$ is $\textbf{strongly idempotent}$ if 
$$\varphi^{i}_{F,(\pi_{1})_{\ast}(F')}:\mathrm{Ext}^{i}_{\mathrm{Mod}(\mathcal{C}/\mathcal{I})}(F,F')\longrightarrow \mathrm{Ext}^{i}_{\mathrm{Mod}(\mathcal{C})}((\pi_{1})_{\ast}(F),(\pi_{1})_{\ast}(F'))$$ is an isomorphism for all $F,F'\in \mathrm{Mod}(\mathcal{C}/\mathcal{I})$ and for all $0\leq i < \infty$.
\end{definition}

Now let us consider $\pi_{1}:\mathcal{C}\longrightarrow \mathcal{C}/\mathcal{I}$ and $\pi_{2}:\mathcal{C}^{op}\longrightarrow \mathcal{C}^{op}/\mathcal{I}^{op}$ the canonical projections.
From section 5 in \cite{RSS}, for $F\in \mathrm{Mod}((\mathcal{C}/\mathcal{I})^{op})$  and  $F'\in \mathrm{Mod}(\mathcal{C}/\mathcal{I})$ we have the morphism $\psi_{F,(\pi_{1})_{\ast}(F')}^{i}:\mathrm{Tor}^{\mathcal{C}^{op}}_{i}(F\circ \pi_{2},F'\circ \pi_{1})\longrightarrow \mathrm{Tor}^{(\mathcal{C}/\mathcal{I})^{op}}_{i}(F, F')$. By using that for $N\in \mathrm{Mod}(\mathcal{C}^{op})$ and $M\in \mathrm{Mod}(\mathcal{C})$ there is an isomorphism  $\mathrm{Hom}_{K}\Big(\mathrm{Tor}_{i}^{\mathcal{C}^{op}}(N,M),K\Big)\simeq 
\mathrm{Ext}_{\mathrm{Mod}(\mathcal{C})}^{i}(M,\mathbb{D}_{\mathcal{C}^{op}}(N))$ for all $i\geq 0$,
we obtain the following result that is a kind of generalization of Theorem 4.4 of Geigle and Lenzing in \cite{GeigleLen}.

\begin{proposition}\label{caractidem2}
Let $\mathcal{C}$ be a  $K$-category  and $\mathcal{I}$ an ideal.  The following are equivalent.
\begin{enumerate}
\item [(a)] $\mathcal{I}$ is strongly idempotent 

\item [(b)] $\mathbb{EXT}^{i}_{\mathcal{C}}(\mathcal{C}/\mathcal{I},F'\circ \pi_{1})=0$ for $1\leq i<\infty$ and for $F'\in \mathrm{Mod}(\mathcal{C}/\mathcal{I})$.

\item [(c)] $\mathbb{EXT}^{i}_{\mathcal{C}}(\mathcal{C}/\mathcal{I},J\circ \pi_{1})=0$ for $1\leq i<\infty$ and for each $J\in \mathrm{Mod}(\mathcal{C}/\mathcal{I})$ which is injective.

\item [(d)] $\psi_{F,(\pi_{1})_{\ast}(F')}^{i}:\mathrm{Tor}^{\mathcal{C}^{op}}_{i}(F\circ \pi_{2},F'\circ \pi_{1})\longrightarrow \mathrm{Tor}^{(\mathcal{C}/\mathcal{I})^{op}}_{i}(F, F')$ is an isomorphism for all $0\leq i<\infty$ and $F\in \mathrm{Mod}((\mathcal{C}/\mathcal{I})^{op})$ as well as  $F'\in \mathrm{Mod}(\mathcal{C}/\mathcal{I})$.

\item [(e)] $\mathbb{TOR}_{i}^{\mathcal{C}^{op}}(\mathcal{C}/\mathcal{I},F'\circ\pi_{1})=0$ for $1\leq i<\infty$  and for all $F'\in \mathrm{Mod}(\mathcal{C}/\mathcal{I})$.

\item [(f)] $\mathbb{TOR}_{i}^{\mathcal{C}^{op}}(\mathcal{C}/\mathcal{I},P\circ\pi_{1})=0$ for $1\leq i<\infty$ and for all $P\in \mathrm{Mod}(\mathcal{C}/\mathcal{I})$ which is projective.

\end{enumerate}
\end{proposition}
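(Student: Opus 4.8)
The plan is to treat the Ext–conditions $(a),(b),(c)$ and the Tor–conditions $(d),(e),(f)$ as two parallel clusters and then link them by $K$–duality. The common backbone is the adjoint triple
$$\tfrac{\mathcal{C}}{\mathcal{I}}\otimes_{\mathcal{C}^{op}}-\ \dashv\ (\pi_{1})_{\ast}\ \dashv\ \mathcal{C}(\tfrac{\mathcal{C}}{\mathcal{I}},-),$$
in which the restriction functor $(\pi_{1})_{\ast}$ is exact and fully faithful (a natural transformation $F\circ\pi_{1}\to F'\circ\pi_{1}$ of $\mathcal{C}$–modules is the same datum as a morphism $F\to F'$ of $\mathcal{C}/\mathcal{I}$–modules, since $\pi_{1}$ is the identity on objects and surjective on morphisms). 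First I would record the two formal consequences of full faithfulness used throughout: the unit $F'\to\mathcal{C}(\tfrac{\mathcal{C}}{\mathcal{I}},F'\circ\pi_{1})$ and the counit $\tfrac{\mathcal{C}}{\mathcal{I}}\otimes_{\mathcal{C}^{op}}(F'\circ\pi_{1})\to F'$ are isomorphisms, so that $\mathbb{EXT}^{0}_{\mathcal{C}}(\mathcal{C}/\mathcal{I},F'\circ\pi_{1})\simeq F'\simeq\mathbb{TOR}^{\mathcal{C}^{op}}_{0}(\mathcal{C}/\mathcal{I},F'\circ\pi_{1})$ naturally. Since the degree–zero comparison map is precisely this unit, condition $(a)$ has content only in degrees $i\geq 1$, matching the ranges in $(b)$–$(f)$.

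For the Ext cluster I would prove $(a)\Rightarrow(b)\Rightarrow(a)$ and $(b)\Leftrightarrow(c)$. For $(a)\Rightarrow(b)$, evaluate $\varphi^{i}$ at the representable $F=\tfrac{\mathcal{C}(C,-)}{\mathcal{I}(C,-)}$, which is projective in $\mathrm{Mod}(\mathcal{C}/\mathcal{I})$; then $\mathrm{Ext}^{i}_{\mathrm{Mod}(\mathcal{C}/\mathcal{I})}(F,F')=0$ for $i\geq 1$, while the target equals $\mathbb{EXT}^{i}_{\mathcal{C}}(\mathcal{C}/\mathcal{I},F'\circ\pi_{1})(C)$ by Remark \ref{descEXT}(a), so $(a)$ forces $(b)$. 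For $(b)\Rightarrow(a)$, take a projective resolution $P_{\bullet}\to F$ by (coproducts of) representables in $\mathrm{Mod}(\mathcal{C}/\mathcal{I})$ and apply the exact functor $(\pi_{1})_{\ast}$; condition $(b)$ together with Remark \ref{descEXT}(a) says exactly that each $(\pi_{1})_{\ast}P_{j}$ is acyclic for $\mathrm{Ext}^{\bullet}_{\mathrm{Mod}(\mathcal{C})}(-,F'\circ\pi_{1})$, so this resolution computes $\mathrm{Ext}^{i}_{\mathrm{Mod}(\mathcal{C})}((\pi_{1})_{\ast}F,(\pi_{1})_{\ast}F')$, and comparing with the complex computing $\mathrm{Ext}^{i}_{\mathrm{Mod}(\mathcal{C}/\mathcal{I})}(F,F')$ through the degree–zero unit isomorphism identifies the two and shows $\varphi^{i}$ is an isomorphism. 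The implication $(b)\Rightarrow(c)$ is trivial, and $(c)\Rightarrow(b)$ is a dimension shift: embed $F'$ into an injective $J$ in $\mathrm{Mod}(\mathcal{C}/\mathcal{I})$, apply the exact $(\pi_{1})_{\ast}$ to the resulting short exact sequence, and run the long exact sequence of the $\delta$–functor $\mathbb{EXT}^{\bullet}_{\mathcal{C}}(\mathcal{C}/\mathcal{I},-)$, using the unit isomorphism in degree $0$ to kill $\mathbb{EXT}^{1}$ and then inducting.

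The Tor cluster $(d)\Leftrightarrow(e)\Leftrightarrow(f)$ is handled by the formally dual arguments, now with the counit isomorphism, Remark \ref{descEXT}(b), representable projectives in $\mathrm{Mod}((\mathcal{C}/\mathcal{I})^{op})$, and projective (rather than injective) presentations in the dimension shift. To link the two clusters I would use $K$–duality: over a field a map of vector spaces is invertible iff its dual is, so $\psi^{i}_{F,(\pi_{1})_{\ast}(F')}$ is an isomorphism iff $\mathrm{Hom}_{K}(\psi^{i},K)$ is. Feeding the natural isomorphism $\mathrm{Hom}_{K}(\mathrm{Tor}^{\mathcal{C}^{op}}_{i}(N,M),K)\simeq\mathrm{Ext}^{i}_{\mathrm{Mod}(\mathcal{C})}(M,\mathbb{D}_{\mathcal{C}^{op}}(N))$ recalled before the statement, together with the compatibility $\mathbb{D}_{\mathcal{C}^{op}}\circ(\pi_{2})_{\ast}=(\pi_{1})_{\ast}\circ\mathbb{D}_{(\mathcal{C}/\mathcal{I})^{op}}$, identifies $\mathrm{Hom}_{K}(\psi^{i}_{F,(\pi_{1})_{\ast}(F')},K)$ with the Ext–comparison map $\varphi^{i}_{F',\mathbb{D}_{(\mathcal{C}/\mathcal{I})^{op}}(F)}$. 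Hence $(a)$ gives $(d)$ at once, it being the special case of $(a)$ with second argument $\mathbb{D}_{(\mathcal{C}/\mathcal{I})^{op}}(F)$; conversely, taking $F$ representable in $\mathrm{Mod}((\mathcal{C}/\mathcal{I})^{op})$ shows that $(d)$ forces $\varphi^{i}$ to be an isomorphism whenever the second argument is one of the injective cogenerators $\mathbb{D}_{(\mathcal{C}/\mathcal{I})^{op}}((\mathcal{C}/\mathcal{I})(-,C))$, which by the representable computation of the second paragraph yields the vanishing in $(c)$ and therefore $(a)$.

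I expect the identification of $\mathrm{Hom}_{K}(\psi^{i},K)$ with $\varphi^{i}_{F',\mathbb{D}_{(\mathcal{C}/\mathcal{I})^{op}}(F)}$ to be the main obstacle: one must verify that the duality isomorphism is natural enough to carry the structural comparison map $\psi^{i}$ from \cite{RSS} to the comparison map $\varphi^{i}$ of Definition \ref{kidemcat}, that is, that the two families of maps agree at the level of (projective) resolutions and not merely that their source and target groups are $K$–dual. The second delicate point is the passage in $(d)\Rightarrow(a)$ from vanishing against the cogenerators $\mathbb{D}_{(\mathcal{C}/\mathcal{I})^{op}}((\mathcal{C}/\mathcal{I})(-,C))$ to vanishing against all injectives of $\mathrm{Mod}(\mathcal{C}/\mathcal{I})$; here I would use that every injective is a direct summand of a product of these cogenerators and that $\mathbb{EXT}^{\bullet}_{\mathcal{C}}(\mathcal{C}/\mathcal{I},-)$, being the derived functor of the right adjoint $\mathcal{C}(\tfrac{\mathcal{C}}{\mathcal{I}},-)$, commutes with products. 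Everything else reduces to bookkeeping with long exact sequences and the acyclicity supplied by Remark \ref{descEXT}.
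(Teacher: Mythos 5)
Your proposal is correct, but there is little in the paper to compare it against line by line: the paper's entire proof of this proposition is the single sentence that the proof of \cite[Corollary 5.10]{RSS} ``can be adapted to this setting.'' What you have written is, in effect, that adaptation carried out explicitly, and it follows precisely the strategy the paper's own preamble sets up: the duality isomorphism $\mathrm{Hom}_{K}\big(\mathrm{Tor}_{i}^{\mathcal{C}^{op}}(N,M),K\big)\simeq \mathrm{Ext}^{i}_{\mathrm{Mod}(\mathcal{C})}(M,\mathbb{D}_{\mathcal{C}^{op}}(N))$ and the compatibility $\mathbb{D}_{\mathcal{C}^{op}}\circ(\pi_{2})_{\ast}=(\pi_{1})_{\ast}\circ\mathbb{D}_{(\mathcal{C}/\mathcal{I})^{op}}$ are recalled immediately before the statement exactly so that the Tor-conditions (d),(e),(f) can be translated into the Ext-conditions (a),(b),(c), which is your bridge; and your Ext-cluster argument (evaluation at representables via Remark~\ref{descEXT}, the acyclic-resolution comparison for $(b)\Rightarrow(a)$, dimension shifting against injectives for $(c)\Rightarrow(b)$) is the Auslander--Platzeck--Todorov/Geigle--Lenzing scheme that \cite{RSS} itself follows. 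The two obstacles you single out are indeed the only nontrivial verifications, and your proposed treatments are sound: the identification of $\mathrm{Hom}_{K}(\psi^{i},K)$ with $\varphi^{i}_{F',\mathbb{D}_{(\mathcal{C}/\mathcal{I})^{op}}(F)}$ holds because both comparison maps are induced by a single chain map from a projective resolution of the restricted module to the restriction of a projective resolution, to which the natural duality isomorphism applies degreewise (together with balancing of Ext, since the dualized Tor-comparison computes Ext via the second variable); and the passage from the cogenerators $\mathbb{D}_{(\mathcal{C}/\mathcal{I})^{op}}\big(\mathrm{Hom}_{\mathcal{C}/\mathcal{I}}(-,C)\big)$ to arbitrary injectives works because $\mathrm{Mod}(\mathcal{C})$ is AB4$^{\ast}$, products of injectives are injective, and hence each $\mathrm{Ext}^{i}_{\mathrm{Mod}(\mathcal{C})}\big(\frac{\mathcal{C}(C,-)}{\mathcal{I}(C,-)},-\big)$ commutes with products. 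It is worth noting that this last step is also the one place where the standing hypothesis that $K$ is a field is genuinely used: it guarantees both that a $K$-linear map is invertible exactly when its dual is, and that the duals of the representables form an injective cogenerating family of $\mathrm{Mod}(\mathcal{C}/\mathcal{I})$; in the dualizing-varieties setting of \cite{RSS} these facts are supplied by the duality $\mathbb{D}$ built into that framework, which is what makes the adaptation to arbitrary $K$-categories a real (if routine) piece of work rather than a literal transcription.
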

\begin{proof}
The proof given in \cite[Corollary 5.10]{RSS} can be adapted to this setting.
\end{proof}

The following is a generalization \cite[Definition 4.5]{GeigleLen}.

\begin{definition}
Let $\mathcal{I}$ be an ideal of $\mathcal{C}$. It is said that $\pi_{1}:\mathcal{C}\longrightarrow\mathcal{C}/\mathcal{I}$ is an $\textbf{homological epimorphism}$ if $\mathcal{I}$ is strongly idempotent.
\end{definition}


\begin{proposition}\label{resolproyten}
Let $\mathcal{C}$ and $\mathcal{D}$ be two  $K$-categories, and let $\mathcal{A}$ be an $AB4$ and $AB4^{\ast}$ $K$-category. Let $X^{\bullet}$ be a projective resolution for $F\in \mathrm{Fun}_{K}(\mathcal{C},\mathcal{A})$ and let $Y^{\bullet}$ be a projective resolution for $G\in \mathrm{Mod}(\mathcal{C}^{op}\otimes_{K}\mathcal{D})$. If $\mathrm{Tor}_{n}^{\mathcal{C}}(F,G(-,D))=0$ for all $D\in \mathcal{D}$ and for all $n>0$, then $X^{\bullet}\boxtimes_{\mathcal{C}}Y^{\bullet}$ is a projective resolution of $F\boxtimes_{\mathcal{C}}G\in \mathrm{Fun}_{K}(\mathcal{D},\mathcal{A})$.
\end{proposition}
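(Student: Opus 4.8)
The plan is to verify the two defining properties of a projective resolution separately: that every term of the total complex $X^{\bullet}\boxtimes_{\mathcal{C}}Y^{\bullet}$ is projective in $\mathrm{Fun}_{K}(\mathcal{D},\mathcal{A})$, and that the augmented complex $X^{\bullet}\boxtimes_{\mathcal{C}}Y^{\bullet}\to F\boxtimes_{\mathcal{C}}G$ is exact. Throughout I would use that kernels, cokernels, and hence homology in $\mathrm{Fun}_{K}(\mathcal{D},\mathcal{A})$ are computed objectwise, so that for every $D\in\mathcal{D}$ the evaluation functor $(-)(D)$ is exact and commutes with the formation of the total complex; moreover $(X^{p}\boxtimes_{\mathcal{C}}Y^{q})(D)=X^{p}\otimes_{\mathcal{C}}Y^{q}(-,D)$ by the very definition of $\boxtimes_{\mathcal{C}}$. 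This reduces the exactness question to a statement about a double complex in $\mathcal{A}$, one for each $D$.

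For the projectivity of the terms I would apply Proposition \ref{prodtenproy}(b) to each bifactor $X^{p}\boxtimes_{\mathcal{C}}Y^{q}$. Since $Y^{q}$ is projective in $\mathrm{Mod}(\mathcal{C}^{op}\otimes_{K}\mathcal{D})$, a reduction to the representables $\mathcal{C}(-,c)\otimes_{K}\mathcal{D}(d,-)$ shows that $Y^{q}(C,-)$ is projective in $\mathrm{Mod}(\mathcal{D})$ for every $C$; together with the projectivity of $X^{p}$ in $\mathrm{Fun}_{K}(\mathcal{C},\mathcal{A})$ and the hypothesis that $\mathcal{A}$ is $AB4^{\ast}$, Proposition \ref{prodtenproy}(b) then gives that $X^{p}\boxtimes_{\mathcal{C}}Y^{q}$ is projective in $\mathrm{Fun}_{K}(\mathcal{D},\mathcal{A})$. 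As each term of the total complex is a finite direct sum of such objects, every term is projective.

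For exactness, fix $D\in\mathcal{D}$ and consider the first-quadrant double complex $C^{p,q}=X^{p}\otimes_{\mathcal{C}}Y^{q}(-,D)$ in $\mathcal{A}$, whose total complex is $(X^{\bullet}\boxtimes_{\mathcal{C}}Y^{\bullet})(D)$; convergence of its spectral sequences is automatic. I would run the spectral sequence that first takes homology in the $X$-direction. Because $Y^{q}$ projective in $\mathrm{Mod}(\mathcal{C}^{op}\otimes_{K}\mathcal{D})$ forces $Y^{q}(-,D)$ to be projective in $\mathrm{Mod}(\mathcal{C}^{op})$, the functor $-\otimes_{\mathcal{C}}Y^{q}(-,D)$ is exact (this is where $AB4$ enters, via the identification $-\otimes_{\mathcal{C}}\mathcal{C}(-,c)\simeq(-)(c)$ propagated through coproducts). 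Since $X^{\bullet}\to F$ is a resolution, the $E_{1}$-page is concentrated in degree $p=0$, where it equals $F\otimes_{\mathcal{C}}Y^{\bullet}(-,D)$. Taking homology in the remaining direction and using that $Y^{\bullet}(-,D)\to G(-,D)$ is a projective resolution in $\mathrm{Mod}(\mathcal{C}^{op})$, the $E_{2}$-page is $\mathrm{Tor}^{\mathcal{C}}_{q}(F,G(-,D))$, which by hypothesis vanishes for $q>0$ and equals $F\otimes_{\mathcal{C}}G(-,D)=(F\boxtimes_{\mathcal{C}}G)(D)$ for $q=0$. The spectral sequence degenerates, so $(X^{\bullet}\boxtimes_{\mathcal{C}}Y^{\bullet})(D)$ is a resolution of $(F\boxtimes_{\mathcal{C}}G)(D)$; as $D$ was arbitrary and homology is objectwise, $X^{\bullet}\boxtimes_{\mathcal{C}}Y^{\bullet}\to F\boxtimes_{\mathcal{C}}G$ is exact, which together with the previous paragraph finishes the proof.

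The main obstacle I anticipate is the bookkeeping around the tensor product: establishing that $-\otimes_{\mathcal{C}}P$ is exact for $P$ projective in $\mathrm{Mod}(\mathcal{C}^{op})$ (the point at which $AB4$ and the reduction to evaluation on representables are genuinely used), and pinning down that the homology of $F\otimes_{\mathcal{C}}Y^{\bullet}(-,D)$ is precisely the $\mathrm{Tor}^{\mathcal{C}}_{q}(F,G(-,D))$ in the hypothesis, i.e. that this derived functor may be computed by resolving the second variable. The balancing required for the latter is itself a consequence of comparing the two spectral sequences of the same double complex, so some care is needed to invoke it without circularity.
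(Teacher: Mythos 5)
Your proof is correct, but there is nothing in the paper to compare it against: the paper's entire ``proof'' of Proposition \ref{resolproyten} is the citation ``See \cite[Proposition 11.8]{Mitchelring} in p.\ 55'', so your argument supplies exactly what the paper outsources to Mitchell. Both of your steps are sound. For projectivity, the reduction is valid: a projective of $\mathrm{Mod}(\mathcal{C}^{op}\otimes_{K}\mathcal{D})$ is a summand of a coproduct of representables $\mathcal{C}(-,c)\otimes_{K}\mathcal{D}(d,-)$, whose evaluation in either variable is (because $K$ is a field) a coproduct of copies of a representable of the other variable; hence $Y^{q}(C,-)$ is projective in $\mathrm{Mod}(\mathcal{D})$, Proposition \ref{prodtenproy}(b) applies to each $X^{p}\boxtimes_{\mathcal{C}}Y^{q}$, and each total degree is a finite direct sum of these. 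For exactness, the same evaluation argument shows $Y^{q}(-,D)$ is projective in $\mathrm{Mod}(\mathcal{C}^{op})$, and $-\otimes_{\mathcal{C}}Y^{q}(-,D)$ is then exact by the chain representables $\rightarrow$ coproducts (this is where $AB4$ enters) $\rightarrow$ summands, so the column spectral sequence collapses as you describe. One remark: your anticipated obstacle about balancing is not actually an obstacle. Since $\mathcal{A}$ is an arbitrary $AB4$ and $AB4^{\ast}$ category, possibly without enough projectives, the only available definition of $\mathrm{Tor}^{\mathcal{C}}_{q}(F,-)$ --- and the one Mitchell uses --- is the left derived functor of $F\otimes_{\mathcal{C}}-$ computed from projective resolutions in $\mathrm{Mod}(\mathcal{C}^{op})$; as $Y^{\bullet}(-,D)\to G(-,D)$ is such a resolution (exactness in $\mathrm{Mod}(\mathcal{C}^{op}\otimes_{K}\mathcal{D})$ is objectwise), the identification $E_{2}^{0,q}\simeq\mathrm{Tor}^{\mathcal{C}}_{q}(F,G(-,D))$ holds by definition, with no comparison of spectral sequences and hence no circularity; the $q=0$ identification uses only that $F\otimes_{\mathcal{C}}-$ is right exact, being a left adjoint by the adjunction isomorphism recalled in the paper. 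What your writeup buys over the paper's citation is that it makes visible precisely where each hypothesis ($K$ a field, $AB4$, $AB4^{\ast}$, the Tor-vanishing) is used.
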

\begin{proof}
See \cite[Proposition 11.8]{Mitchelring} in p. 55.
\end{proof}
We recall the following result given in formula $(5)$ in page 28 in \cite{Mitchelring}.
Let $F\in \mathrm{Fun}_{K}\Big(\mathcal{E}^{op}\otimes_{K}\mathcal{C},\mathcal{A}\Big)$ be $G\in \mathrm{Mod}(\mathcal{C}^{op}\otimes_{K}\mathcal{D})$ and $H\in \mathrm{Mod}(\mathcal{D}^{op}\otimes_{K}\mathcal{E})$. We then have a natural isomorphism in $\mathcal{A}$:
\begin{equation}
\big(F\boxtimes_{\mathcal{C}}G\big)\otimes_{\mathcal{D}\otimes_{K} \mathcal{E}^{op}} H\simeq F\otimes_{\mathcal{E}^{op}\otimes_{K}\mathcal{C}}\big( G\boxtimes_{\mathcal{D}} H\big).\label{asociatensor}
\end{equation}

The following proposition is a generalization of Theorem 2.8 in page 167 in Cartan and Eilenberg's book \cite{Cartan}.

\begin{proposition}\label{isocartan}
Let $\mathcal{C},\mathcal{D}$ and $\mathcal{E}$ be three $K$-categories. Consider $F\in \mathrm{Fun}_{K}(\mathcal{E}^{op}\otimes_{K} \mathcal{C},\mathcal{A})$,  $G\in \mathrm{Fun}_{K}(\mathcal{C}^{op}\otimes_{K} \mathcal{D},\mathrm{Mod}(K))$ and  $H\in \mathrm{Fun}_{K}(\mathcal{D}^{op}\otimes_{K} \mathcal{E},\mathrm{Mod}(K))$. Suppose that
$\mathrm{Tor}_{n}^{\mathcal{C}}(F,G(-,D))=0$  for all  $D\in \mathcal{D}$ and $\forall n>0$  and  that $\mathrm{Tor}_{n}^{\mathcal{D}}(G,H(-,E))=0$ for all $E\in \mathcal{E}$ and $\forall n>0$. Hence, there exists an isomorphism for all $i\geq 0$:
$$\mathrm{Tor}_{i}^{\mathcal{D}\otimes_{K} \mathcal{E}^{op}}(F\boxtimes_{\mathcal{C}}G,H)\simeq \mathrm{Tor}_{i}^{\mathcal{E}^{op}\otimes_{K} \mathcal{C}}(F,G\boxtimes_{\mathcal{D}}H).$$
\end{proposition}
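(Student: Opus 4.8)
The plan is to run the classical Cartan--Eilenberg balancing argument, but organized around a single first--quadrant double complex whose totalization I will compute in two different ways: once by recognizing it as a projective resolution of $F\boxtimes_{\mathcal{C}}G$ tensored with $H$, and once by a spectral sequence in the other direction. The associativity isomorphism \eqref{asociatensor} is what lets me pass between the two pictures, and Proposition \ref{resolproyten} is what turns the two $\mathrm{Tor}$-vanishing hypotheses into honest resolutions. Throughout I abbreviate $\mathcal{B}:=\mathcal{D}\otimes_{K}\mathcal{E}^{op}$ and $\mathcal{B}':=\mathcal{E}^{op}\otimes_{K}\mathcal{C}$, and I view $F$ (and the terms of its resolution) as a functor $\mathcal{C}\to\mathrm{Fun}_{K}(\mathcal{E}^{op},\mathcal{A})$ so that $\boxtimes_{\mathcal{C}}$ is literally the construction of Proposition \ref{resolproyten}.

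Concretely, I would first fix a projective resolution $X^{\bullet}\to F$ in $\mathrm{Fun}_{K}(\mathcal{B}',\mathcal{A})$ and a projective resolution $Y^{\bullet}\to G$ in $\mathrm{Mod}(\mathcal{C}^{op}\otimes_{K}\mathcal{D})$, and form the double complex
\[ W^{p,q}:=X^{p}\otimes_{\mathcal{B}'}\bigl(Y^{q}\boxtimes_{\mathcal{D}}H\bigr), \]
which by \eqref{asociatensor}, applied termwise, is naturally isomorphic to $\bigl(X^{p}\boxtimes_{\mathcal{C}}Y^{q}\bigr)\otimes_{\mathcal{B}}H$. For the first computation, note that the hypothesis $\mathrm{Tor}_{n}^{\mathcal{C}}(F,G(-,D))=0$ for $n>0$ is exactly the input of Proposition \ref{resolproyten}, so $P^{\bullet}:=X^{\bullet}\boxtimes_{\mathcal{C}}Y^{\bullet}$ is a projective resolution of $F\boxtimes_{\mathcal{C}}G$ in $\mathrm{Fun}_{K}(\mathcal{B},\mathcal{A})$. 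Since $-\otimes_{\mathcal{B}}H$ is a left adjoint it preserves coproducts and hence commutes with totalization, giving $\mathrm{Tot}(W^{\bullet,\bullet})\cong P^{\bullet}\otimes_{\mathcal{B}}H$ and therefore $H_{i}\bigl(\mathrm{Tot}(W)\bigr)\cong\mathrm{Tor}_{i}^{\mathcal{B}}(F\boxtimes_{\mathcal{C}}G,H)$.

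For the second computation I would use the spectral sequence of $W^{\bullet,\bullet}$ obtained by taking vertical ($q$) homology first. Evaluated at $E\in\mathcal{E}$ the complex $Y^{\bullet}\boxtimes_{\mathcal{D}}H$ is $Y^{\bullet}\otimes_{\mathcal{D}}H(-,E)$, whose $q$-th homology is $\mathrm{Tor}_{q}^{\mathcal{D}}(G,H(-,E))$; the second hypothesis makes this vanish for $q>0$, so $Y^{\bullet}\boxtimes_{\mathcal{D}}H$ is a resolution of $G\boxtimes_{\mathcal{D}}H$. Because $X^{p}$ is projective, hence flat, the functor $X^{p}\otimes_{\mathcal{B}'}(-)$ is exact and preserves this acyclicity, so $'E_{1}$ is concentrated in the row $q=0$ with $'E_{1}^{p,0}=X^{p}\otimes_{\mathcal{B}'}(G\boxtimes_{\mathcal{D}}H)$. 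Consequently $'E_{2}^{p,0}=H_{p}\bigl(X^{\bullet}\otimes_{\mathcal{B}'}(G\boxtimes_{\mathcal{D}}H)\bigr)=\mathrm{Tor}_{p}^{\mathcal{B}'}(F,G\boxtimes_{\mathcal{D}}H)$, the sequence degenerates, and $H_{i}\bigl(\mathrm{Tot}(W)\bigr)\cong\mathrm{Tor}_{i}^{\mathcal{B}'}(F,G\boxtimes_{\mathcal{D}}H)$. Comparing the two expressions for $H_{i}\bigl(\mathrm{Tot}(W)\bigr)$ yields the claimed isomorphism.

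I expect the genuine difficulty to be purely in the bookkeeping of the category identifications, not in the homological machinery. One must be careful that, under $\mathrm{Fun}_{K}(\mathcal{E}^{op}\otimes_{K}\mathcal{C},\mathcal{A})\cong\mathrm{Fun}_{K}(\mathcal{C},\mathrm{Fun}_{K}(\mathcal{E}^{op},\mathcal{A}))$ and its analogue for $\mathcal{B}$, the operation $\boxtimes_{\mathcal{C}}$, the notion of projective object, and the very $\mathrm{Tor}^{\mathcal{C}}$ appearing in the hypothesis all match the ingredients of Proposition \ref{resolproyten}; in particular one should check that $\mathrm{Fun}_{K}(\mathcal{E}^{op},\mathcal{A})$ is again $AB4$ and $AB4^{\ast}$ so that Proposition \ref{resolproyten} applies in that guise. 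The remaining inputs---that projectives are flat so $X^{p}\otimes_{\mathcal{B}'}(-)$ is exact, and that these $\mathrm{Tor}$ groups may be computed from a projective resolution of the first variable---are routine and I would treat them as such.
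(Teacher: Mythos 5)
Your proof is correct, and its two pillars coincide with the paper's: Proposition \ref{resolproyten} (fed by the first vanishing hypothesis) to make $X^{\bullet}\boxtimes_{\mathcal{C}}Y^{\bullet}$ a projective resolution of $F\boxtimes_{\mathcal{C}}G$, and the associativity isomorphism \eqref{asociatensor} to move $H$ across the $\boxtimes$. Where you genuinely depart from the paper is in how the second Tor group is extracted. The paper stays symmetric: it also chooses a projective resolution $Z^{\bullet}$ of $H$ in $\mathrm{Mod}(\mathcal{D}^{op}\otimes_{K}\mathcal{E})$, applies Proposition \ref{resolproyten} a second time (this is where the second vanishing hypothesis enters in the paper's proof) to conclude that $Y^{\bullet}\boxtimes_{\mathcal{D}}Z^{\bullet}$ is a projective resolution of $G\boxtimes_{\mathcal{D}}H$, and then reads both Tor groups off the homology of one and the same totalized complex $(X^{\bullet}\boxtimes_{\mathcal{C}}Y^{\bullet})\otimes_{\mathcal{D}\otimes_{K}\mathcal{E}^{op}}Z^{\bullet}\simeq X^{\bullet}\otimes_{\mathcal{E}^{op}\otimes_{K}\mathcal{C}}(Y^{\bullet}\boxtimes_{\mathcal{D}}Z^{\bullet})$, invoking twice Mitchell's balancing principle (p. 32 of \cite{Mitchelring}) that an acyclic complex of projectives over one argument paired with a projective resolution of the other computes Tor. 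You never resolve $H$, and instead pay with a first-quadrant spectral sequence plus the assertion that each projective $X^{p}$ is flat, i.e. that $X^{p}\otimes_{\mathcal{E}^{op}\otimes_{K}\mathcal{C}}(-)$ is exact. That assertion is true, but it is precisely the step I would not wave off as routine in this generality: a projective of $\mathrm{Fun}_{K}(\mathcal{E}^{op}\otimes_{K}\mathcal{C},\mathcal{A})$ is a retract of a coproduct of objects $P\ten_{K}\mathrm{Hom}_{\mathcal{E}^{op}\otimes_{K}\mathcal{C}}(b,-)$ with $P$ projective in $\mathcal{A}$ (Corollary \ref{prodipordopy}), tensoring such an object against $N$ yields $P\otimes_{K}N(b)$, which is exact in $N$ because $K$ is a field, and one then needs $\mathcal{A}$ to satisfy $AB4$ and have enough projectives to pass through the coproduct and the retract --- exactly the hypotheses on $\mathcal{A}$ that Proposition \ref{resolproyten} imposes and that the statement of Proposition \ref{isocartan} leaves tacit, so you should record them explicitly. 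The trade-off is fair: your route saves one resolution and replaces Mitchell's hyperhomology balancing by a standard spectral-sequence degeneration, while the paper's three-resolution route avoids all flatness considerations and therefore asks nothing of $\mathcal{A}$ beyond what Proposition \ref{resolproyten} already requires.
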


\begin{proof}
Let  $X^{\bullet}$ be a projective resolution of $F$, with $F$ seen as a functor $F:\mathcal{C}\longrightarrow \mathrm{Fun}_{K}(\mathcal{E}^{op},\mathcal{A})$, and $Y^{\bullet}$ a projective resolution of $G$ in $\mathrm{Mod}(\mathcal{C}^{op}\otimes_{K} \mathcal{D})=\mathrm{Fun}_{K}(\mathcal{C}^{op}\otimes \mathcal{D},\bf{Ab})$. By Proposition \ref{resolproyten} we have that $X^{\bullet}\boxtimes_{\mathcal{C}}Y^{\bullet}$ is a projective resolution of $F\boxtimes_{\mathcal{C}}G$ in $\mathrm{Fun}_{K}(\mathcal{D},\mathrm{Fun}_{K}(\mathcal{E}^{op},\mathcal{A})=\mathrm{Fun}_{K}(\mathcal{E}^{op}\otimes_{K} \mathcal{D},\mathcal{A})$.\\
On the other hand, by considering $G$ as a functor in  $\mathrm{Fun}_{K}(\mathcal{D},\mathrm{Fun}_{K}(\mathcal{C}^{op},\bf{Ab}))$, we have that $Y^{\bullet}$ is a projective resolution of $G:\mathcal{D}\longrightarrow \mathrm{Fun}_{K}(\mathcal{C}^{op},\bf{Ab})$ and let $Z^{\bullet}$ be a projective resolution of $H$ in $ \mathrm{Mod}(\mathcal{D}^{op}\otimes_{K} \mathcal{E})$. Hence, by Proposition \ref{resolproyten} we have that  $Y^{\bullet}\boxtimes_{\mathcal{D}}  Z^{\bullet}$ is a projective resolution of $G\boxtimes_{\mathcal{D}}H$ in $\mathrm{Mod}(\mathcal{C}^{op}\otimes _{K}\mathcal{E})$. By the associativity given above in Equation \ref{asociatensor}, we obtain an isomorphism of complexes in $\mathcal{A}$:
$$(\ast):\big(X^{\bullet}\boxtimes_{\mathcal{C}}Y^{\bullet}\big)\otimes_{\mathcal{D}\otimes \mathcal{E}^{op}} Z^{\bullet}=X^{\bullet}\otimes_{\mathcal{E}^{op}\otimes\mathcal{C}}\big( Y^{\bullet}\boxtimes_{\mathcal{D}} Z^{\bullet}\big).$$
Now, since $X^{\bullet}\boxtimes_{\mathcal{C}} Y^{\bullet}$ is an acyclic complex over $F\boxtimes_{\mathcal{C}}G$ and $Z^{\bullet}$ is a projective resolution of $H$ in $\mathrm{Mod}(\mathcal{D}^{op}\otimes_{K} \mathcal{E})$ and from the discusion on page 32 in \cite{Mitchelring}, we have that
$$\mathrm{Tor}_{i}^{\mathcal{D}\otimes \mathcal{E}^{op}}(F\boxtimes_{\mathcal{C}}G,H)=\mathrm{H}_{i}\Big((X^{\bullet}\boxtimes_{\mathcal{C}} Y^{\bullet})\otimes_{\mathcal{D}\otimes \mathcal{E}^{op}} Z^{\bullet}\Big).$$
Similarly,  we have that $\mathrm{Tor}_{i}^{\mathcal{E}^{op}\otimes\mathcal{C}}\Big(F,G\otimes_{\mathcal{D}}H\Big)=\mathrm{H}_{i}\Big(X^{\bullet}\otimes_{\mathcal{E}^{op}\otimes\mathcal{C}}\big( Y^{\bullet}\boxtimes_{\mathcal{D}} Z^{\bullet}\big)\Big).$ Therefore, by the isomorphism of complexes $(\ast)$ we conclude the proof.
\end{proof}

\begin{proposition}\label{pipiophomo}
Let $\mathcal{C}$ be a $K$-category and $\mathcal{I}$ an ideal of $\mathcal{C}$. Thus, $\pi_{1}:\mathcal{C}\longrightarrow \mathcal{C}/\mathcal{I}$ is an homological epimorphism if and only if $\pi_{2}:\mathcal{C}^{op}\longrightarrow \mathcal{C}^{op}/\mathcal{I}^{op}$ is an homological epimorphism.
\end{proposition}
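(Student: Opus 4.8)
The plan is to reduce both homological-epimorphism conditions to the symmetric $\mathrm{Tor}$-criterion of Proposition \ref{caractidem2}(d) and then transport one into the other using the commutativity of the tensor product. Since $(\mathcal{C}^{op})^{op}=\mathcal{C}$ and $(\mathcal{I}^{op})^{op}=\mathcal{I}$, the statement is self-dual, so it suffices to prove one implication and read off the converse by exchanging the roles of $\mathcal{C}$ and $\mathcal{C}^{op}$. Assume then that $\pi_{1}$ is an homological epimorphism, i.e. $\mathcal{I}$ is strongly idempotent, and aim to deduce the same for $\mathcal{I}^{op}$ in $\mathcal{C}^{op}$.

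First I would unwind Proposition \ref{caractidem2}(d) for each category. For $\mathcal{C}$ it says that
$$\psi^{i}_{F,(\pi_{1})_{\ast}(F')}:\mathrm{Tor}_{i}^{\mathcal{C}^{op}}(F\circ\pi_{2},F'\circ\pi_{1})\longrightarrow \mathrm{Tor}_{i}^{(\mathcal{C}/\mathcal{I})^{op}}(F,F')$$
is an isomorphism for all $i\geq 0$, $F\in\mathrm{Mod}((\mathcal{C}/\mathcal{I})^{op})$ and $F'\in\mathrm{Mod}(\mathcal{C}/\mathcal{I})$. Applying the same proposition to $\mathcal{C}^{op}$ with ideal $\mathcal{I}^{op}$, and using the canonical identifications $(\mathcal{C}^{op})^{op}=\mathcal{C}$ and $\mathcal{C}^{op}/\mathcal{I}^{op}=(\mathcal{C}/\mathcal{I})^{op}$ together with the fact that the two projections attached to $\mathcal{C}^{op}$ are precisely $\pi_{2}$ and $\pi_{1}$, strong idempotency of $\mathcal{I}^{op}$ translates into the requirement that
$$\mathrm{Tor}_{i}^{\mathcal{C}}(G\circ\pi_{1},G'\circ\pi_{2})\longrightarrow \mathrm{Tor}_{i}^{\mathcal{C}/\mathcal{I}}(G,G')$$
be an isomorphism for all $i\geq 0$, $G\in\mathrm{Mod}(\mathcal{C}/\mathcal{I})$ and $G'\in\mathrm{Mod}((\mathcal{C}/\mathcal{I})^{op})$.

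The bridge between the two is the commutativity of the tensor product: for $N\in\mathrm{Mod}(\mathcal{C}^{op})$ and $M\in\mathrm{Mod}(\mathcal{C})$ there is a natural isomorphism $N\otimes_{\mathcal{C}^{op}}M\simeq M\otimes_{\mathcal{C}}N$, induced by the symmetry of $\otimes_{K}$ on the defining coend, which on left derived functors yields $\mathrm{Tor}_{i}^{\mathcal{C}^{op}}(N,M)\simeq \mathrm{Tor}_{i}^{\mathcal{C}}(M,N)$ naturally in both variables. Taking $M=G\circ\pi_{1}$ and $N=G'\circ\pi_{2}$ identifies the source of the $\mathcal{C}^{op}$-map with $\mathrm{Tor}_{i}^{\mathcal{C}^{op}}(G'\circ\pi_{2},G\circ\pi_{1})$, and the analogous symmetry over $\mathcal{C}/\mathcal{I}$ identifies its target with $\mathrm{Tor}_{i}^{(\mathcal{C}/\mathcal{I})^{op}}(G',G)$. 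After relabelling $F:=G'$ and $F':=G$, these are exactly the source and target of $\psi^{i}_{F,(\pi_{1})_{\ast}(F')}$ in the $\mathcal{C}$-criterion; hence, once one knows the two comparison maps correspond under the commutativity isomorphisms, the $\mathcal{C}$-criterion and the $\mathcal{C}^{op}$-criterion of (d) are logically equivalent and the proposition follows.

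The main obstacle is exactly this last point: verifying that the commutativity isomorphism of $\mathrm{Tor}$ carries the comparison morphism $\psi^{i}$ for $\mathcal{C}^{op}$ onto the comparison morphism $\psi^{i}$ for $\mathcal{C}$. On $\mathrm{Tor}_{0}$ both $\psi$'s are instances of the canonical surjection $-\otimes_{\mathcal{C}^{op}}-\twoheadrightarrow -\otimes_{(\mathcal{C}/\mathcal{I})^{op}}-$ induced by $\pi$, and the tensor-commutativity isomorphism is visibly compatible with this surjection; the higher comparison maps are obtained by deriving, so the compatibility should propagate by naturality and the uniqueness up to the usual comparison of resolutions. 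I would make this precise by tracking $\psi^{i}$ through the construction in \cite{RSS}: fix a projective resolution of $F'\circ\pi_{1}$ over $\mathcal{C}$, apply both tensor functors and the commutativity isomorphism at the chain level, and check that the resulting square commutes, which is a naturality verification rather than a new computation. Granting this, strong idempotency of $\mathcal{I}$ forces the $\mathcal{C}^{op}$-comparison maps to be isomorphisms, so $\pi_{2}$ is an homological epimorphism; the converse is identical after interchanging $\mathcal{C}$ and $\mathcal{C}^{op}$.
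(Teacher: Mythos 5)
Your strategy is genuinely different from the paper's. The paper attacks the Ext-criterion of Proposition \ref{caractidem2}: to show $\mathrm{Ext}^{i}_{\mathrm{Mod}(\mathcal{C}^{op})}\big(\mathcal{C}(-,C)/\mathcal{I}(-,C),\,F'\circ\pi_{2}\big)=0$, it embeds $F'\circ\pi_{2}$ into its double dual $\mathbb{D}_{\mathcal{C}}\mathbb{D}_{\mathcal{C}^{op}}(F'\circ\pi_{2})$, uses the field duality $\mathrm{Ext}^{i}_{\mathrm{Mod}(\mathcal{C}^{op})}\big(M,\mathbb{D}_{\mathcal{C}}(N)\big)\simeq\mathrm{Hom}_{K}\big(\mathrm{Tor}^{\mathcal{C}^{op}}_{i}(M,N),K\big)$ to convert the hypothesis on $\pi_{1}$ into the needed vanishing for double duals, and then climbs to arbitrary $F'\circ\pi_{2}$ by induction on the cokernel of the embedding. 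Your observation that Tor, unlike Ext, is manifestly symmetric under passage to opposite categories is sound, and indeed the paper's duality isomorphism is a disguised form of the same variable swap; but the paper's route has the technical advantage that its key isomorphism is proved by resolving a single variable, so it never needs the balance of Tor.

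That is exactly where your proposal has a gap, at the step you yourself flag as the main obstacle. In the paper's conventions, $\mathrm{Tor}^{\mathcal{C}^{op}}_{i}(N,M)$ is the derived functor of $N\otimes_{\mathcal{C}^{op}}-$ evaluated at $M$, i.e.\ derived in the \emph{second} variable. Hence $\psi^{i}$ for $\mathcal{C}$ is computed from a projective resolution of $F'\circ\pi_{1}$, while $\psi^{i}$ for $\mathcal{C}^{op}$ is computed from a projective resolution of $F\circ\pi_{2}$: the two comparison maps are derived in different variables. So your plan (``fix a projective resolution of $F'\circ\pi_{1}$, apply both tensor functors and the commutativity isomorphism at the chain level'') cannot produce both maps: swapping the factors of that one resolution computes $\mathrm{Tor}^{\mathcal{C}}_{i}(F'\circ\pi_{1},F\circ\pi_{2})$ only after invoking the balance theorem (that resolving either variable gives the same Tor, as in Mitchell's discussion used in the proof of Proposition \ref{isocartan}), and one must then still check that the two $\psi$'s commute with the balance isomorphisms --- a double-complex verification, not mere naturality. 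A clean repair is to abandon criterion (d) in favour of the vanishing criteria (e) or (f) of Proposition \ref{caractidem2}: by (f) and additivity of Tor, $\pi_{1}$ is a homological epimorphism iff $\mathrm{Tor}^{\mathcal{C}^{op}}_{i}\big(\mathrm{Hom}_{\mathcal{C}/\mathcal{I}}(-,C)\circ\pi_{2},\,\mathrm{Hom}_{\mathcal{C}/\mathcal{I}}(C',-)\circ\pi_{1}\big)=0$ for all $i\geq 1$ and all $C,C'$, and criterion (f) for $\mathcal{C}^{op}$ unwinds to the same family of groups up to the swap-plus-balance isomorphism. Since vanishing is preserved by \emph{any} isomorphism, no compatibility of comparison maps is needed, and your argument then closes.
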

\begin{proof}
$(\Rightarrow)$. Suppose that $\pi_{1}:\mathcal{C}\longrightarrow \mathcal{C}/\mathcal{I}$ is an homological epimorphism.\\
Let us consider $\pi_{2}:\mathcal{C}^{op}\longrightarrow \mathcal{C}^{op}/\mathcal{I}^{op}$. By Proposition \ref{caractidem2}, we must see that  $\mathbb{EXT}^{i}_{\mathcal{C}^{op}}(\mathcal{C}^{op}/\mathcal{I}^{op},F'\circ \pi_{2})=0$ for $1\leq i< \infty$ and for $F'\in \mathrm{Mod}(\mathcal{C}^{op}/\mathcal{I}^{op})$. That is, for $C\in \mathcal{C}$ we have to see that $\mathrm{Ext}^{i}_{\mathrm{Mod}(\mathcal{C}^{op})}\Big(\frac{\mathrm{Hom}_{\mathcal{C}}(-,C)}{\mathcal{I}(-,C)},F'\circ \pi_{2}\Big)=0.$
Consider the canonical functor $(\pi_{2})_{\ast}:\mathrm{Mod}(\mathcal{C}^{op}/\mathcal{I}^{op})\longrightarrow \mathrm{Mod}(\mathcal{C}^{op}).$
Since $\pi_{2}$ is an epimorphism in the category of functors, we have that
$$\mathrm{Hom}_{\mathrm{Mod}(\mathcal{C}^{op})}(X\circ \pi_{2}, Y\circ \pi_{2})\simeq \mathrm{Hom}_{\mathrm{Mod}(\mathcal{C}^{op}/\mathcal{I}^{op})}(X,Y)$$
for all $X,Y\in \mathrm{Mod}(\mathcal{C}^{op}/\mathcal{I}^{op})$. We now recall that $\mathrm{Hom}_{\mathcal{C}^{op}/\mathcal{I}^{op}}(-,C)\circ \pi_{2}=\frac{\mathrm{Hom}_{\mathcal{C}}(-,C)}{\mathcal{I}(-,C)}\in \mathrm{Mod}(\mathcal{C}^{op})$ (see  \cite[Lemma 3.7c]{RSS}).\\
Let $\xymatrix{0\ar[r] & L\ar[r]^{\alpha} & M\ar[r]^{\beta} & N\ar[r] & 0}$ be an exact sequence in $\mathrm{Mod}(\mathcal{C}^{op}/\mathcal{I}^{op})$. 
Since $\mathrm{Hom}_{\mathcal{C}^{op}/\mathcal{I}^{op}}(-,C)$ is projective in $\mathrm{Mod}(\mathcal{C}^{op}/\mathcal{I}^{op})$,  we conclude that  we have the following exact sequence in $\mathrm{Mod}(K)$:
$$0\rightarrow \Big(\frac{\mathrm{Hom}_{\mathcal{C}}(-,C)}{\mathcal{I}(-,C)}, L\circ \pi_{2}\Big)\rightarrow \Big(\frac{\mathrm{Hom}_{\mathcal{C}}(-,C)}{\mathcal{I}(-,C)}, M\circ \pi_{2}\Big)\rightarrow \Big(\frac{\mathrm{Hom}_{\mathcal{C}}(-,C)}{\mathcal{I}(-,C)}, N\circ \pi_{2}\Big)\rightarrow 0.$$
Now let us see by induction on $i$ that 
$\mathrm{Ext}^{i}_{\mathrm{Mod}(\mathcal{C}^{op})}\Big(\frac{\mathrm{Hom}_{\mathcal{C}}(-,C)}{\mathcal{I}(-,C)},F'\circ \pi_{2}\Big)=0$ for $F'\in \mathrm{Mod}(\mathcal{C}^{op}/\mathcal{I}^{op})$.\\
We have the following exact sequence in $\mathrm{Mod}(\mathcal{C}^{op})$: 
$$\xymatrix{0\ar[r] & F'\circ \pi_{2}\ar[r]^(.4){\varphi} & \mathbb{D}_{\mathcal{C}}\mathbb{D}_{\mathcal{C}^{op}}(F'\circ \pi_{2})\ar[r]^(.6){\psi} & L\ar[r] & 0.}$$
It is easy to see that there exists $L\in \mathrm{Mod}(\mathcal{C}^{op}/\mathcal{I}^{op})$ such that $L\simeq T\circ \pi_{2}$.
By applying $\mathrm{Hom}_{\mathrm{Mod}(\mathcal{C}^{op})}\Big(\frac{\mathrm{Hom}_{\mathcal{C}}(-,C)}{\mathcal{I}(-,C)},-\Big)$ to the the last exact sequence and by the long exact sequence in homology we obtain a monomorphism

$$\xymatrix{\mathrm{Ext}_{\mathrm{Mod}(\mathcal{C}^{op})}^{1}\Big(\frac{\mathrm{Hom}_{\mathcal{C}}(-,C)}{\mathcal{I}(-,C)}, F'\circ \pi_{2}\Big)\ar[r] &  \mathrm{Ext}_{\mathrm{Mod}(\mathcal{C}^{op})}^{1}\Big(\frac{\mathrm{Hom}_{\mathcal{C}}(-,C)}{\mathcal{I}(-,C)}, \mathbb{D}_{\mathcal{C}}\mathbb{D}_{\mathcal{C}^{op}}(F'\circ \pi_{2})\Big). }$$

We assert that $\mathrm{Ext}_{\mathrm{Mod}(\mathcal{C}^{op})}^{1}\Big(\frac{\mathrm{Hom}_{\mathcal{C}}(-,C)}{\mathcal{I}(-,C)},  \mathbb{D}_{\mathcal{C}}\mathbb{D}_{\mathcal{C}^{op}}(F'\circ \pi_{2})\Big)=0.$ Indeed, we  can consider the following isomorphisms $\mathrm{Ext}^{i}_{\mathrm{Mod}(\mathcal{C}^{op})}\Big(\frac{\mathrm{Hom}_{\mathcal{C}}(-,C)}{\mathcal{I}(-,C)},\mathbb{D}_{\mathcal{C}}\mathbb{D}_{\mathcal{C}^{op}}(F'\circ \pi_{2})\Big)\simeq$ $
\mathrm{Hom}_{K}\Big(\mathrm{Tor}^{\mathcal{C}^{op}}_{i}\Big(\frac{\mathrm{Hom}_{\mathcal{C}}(-,C)}{\mathcal{I}(-,C)},\mathbb{D}_{(\mathcal{C}/\mathcal{I})^{op}}(F')\circ \pi_{1}\Big),K\Big)$ for all $i\geq 1$.\\
Since $\pi_{1}$ is a homological epimorphism, we get that $\mathrm{Tor}^{C^{op}}_{i}\Big(\frac{\mathrm{Hom}_{\mathcal{C}}(-,C)}{\mathcal{I}(-,C)},\mathbb{D}_{(\mathcal{C}/\mathcal{I})^{op}}(F')\circ \pi_{1}\Big)=0$ (see Proposition \ref{caractidem2}); and hence  $\mathrm{Ext}_{\mathrm{Mod}(\mathcal{C}^{op})}^{1}\Big(\frac{\mathrm{Hom}_{\mathcal{C}}(-,C)}{\mathcal{I}(-,C)},  \mathbb{D}_{\mathcal{C}}\mathbb{D}_{\mathcal{C}^{op}}(F'\circ \pi_{2})\Big)=0.$ Thus, we conclude that $\mathrm{Ext}_{\mathrm{Mod}(\mathcal{C}^{op})}^{1}\Big(\frac{\mathrm{Hom}_{\mathcal{C}}(-,C)}{\mathcal{I}(-,C)}, F'\circ \pi_{2}\Big)=0.$
Similarly, we can see that $\mathrm{Ext}_{\mathrm{Mod}(\mathcal{C}^{op})}^{1}\Big(\frac{\mathrm{Hom}_{\mathcal{C}}(-,C)}{\mathcal{I}(-,C)},L\Big)=0.$
We can proceed as above for each $i$ and prove that $\mathrm{Ext}_{\mathrm{Mod}(\mathcal{C}^{op})}^{i}\Big(\frac{\mathrm{Hom}_{\mathcal{C}}(-,C)}{\mathcal{I}(-,C)}, F'\circ \pi_{2}\Big)=0.$ Hence, by  Proposition \ref{caractidem2} we conclude that $\pi_{2}$ is a homological epimorphism. The other implication is similar.
\end{proof}

We omit the proof of the following Lemma.

\begin{lemma}\label{tensordomhom}
Consider $\pi_{1}:\mathcal{C}\longrightarrow \mathcal{C}/\mathcal{I}$ and $\pi_{2}\otimes\pi_{1}:\mathcal{C}^{op}\otimes_{K}\mathcal{C} \longrightarrow (\mathcal{C}/\mathcal{I})^{op}\otimes_{K}(\mathcal{C}/\mathcal{I})=\mathcal{D}$. We then have the following isomorphisms in $\mathrm{Mod}(\mathcal{C}^{op}\otimes_{K}\mathcal{C})$:
\begin{enumerate}
\item [(a)] $\overline{\mathrm{Hom}_{\mathcal{C}/\mathcal{I}}(-,U')\circ \pi_{2}}\boxtimes_{\mathcal{C}_{K}} \underline{\mathrm{Hom}_{\mathcal{C}/\mathcal{I}}(U,-)\circ \pi_{1}}\simeq \mathrm{Hom}_{\mathcal{D}}\Big(-,(U,U')\Big)\circ (\pi_{2}\otimes \pi_{1})$ for all $(U,U')\in \mathcal{C}^{op}\otimes_{K}\mathcal{C}$ and,

\item [(b)] $\overline{\mathrm{Hom}_{\mathcal{C}/\mathcal{I}}(-,Z)\circ \pi_{2}}\boxtimes_{\mathcal{C}_{K}} \underline{\mathrm{Hom}_{\mathcal{C}/\mathcal{I}}(Z',-)\circ \pi_{1}}\simeq \mathrm{Hom}_{\mathcal{D}}\Big((Z,Z'),-\Big)\circ (\pi_{2}\otimes \pi_{1})$ for all $(Z,Z')\in \mathcal{C}^{op}\otimes_{K}\mathcal{C}$.
\end{enumerate}

\end{lemma}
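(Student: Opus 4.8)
The two isomorphisms are mirror images of one another, so the plan is to establish (b) in detail and then obtain (a) by applying (b) to the pair $(\mathcal{C}^{op},\mathcal{I}^{op})$, using the canonical isomorphism $(\mathcal{C}/\mathcal{I})^{op}\simeq \mathcal{C}^{op}/\mathcal{I}^{op}$ together with the swap equivalence $\mathcal{C}^{op}\otimes_{K}\mathcal{C}\simeq \mathcal{C}\otimes_{K}\mathcal{C}^{op}$; this swap is precisely what converts the covariant representable of (b) into the contravariant representable $\mathrm{Hom}_{\mathcal{D}}(-,(U,U'))$ of (a), and it also absorbs the apparent variance mismatch. Alternatively, (a) can be checked directly by the same pointwise computation described below.

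For (b), first I would recognize the left-hand side as an instance of the $\ten_{K}$ construction of Definition \ref{prodtensorconcat}. Setting $M:=\mathrm{Hom}_{\mathcal{C}/\mathcal{I}}(-,Z)\circ \pi_{2}\in \mathrm{Mod}(\mathcal{C}^{op})$ and $N:=\mathrm{Hom}_{\mathcal{C}/\mathcal{I}}(Z',-)\circ \pi_{1}\in \mathrm{Mod}(\mathcal{C})$, the left-hand side is exactly $\overline{M}\boxtimes_{\mathcal{C}_{K}}\underline{N}=M\ten_{K}N$, which lies in $\mathrm{Mod}(\mathcal{C}^{op}\otimes_{K}\mathcal{C})$. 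By Proposition \ref{projinenvelop} there is a natural isomorphism $(M\ten_{K}N)(C',C)\simeq M(C')\otimes_{K}N(C)$ for every $(C',C)\in \mathcal{C}^{op}\otimes_{K}\mathcal{C}$. Since $\pi_{1}$ and $\pi_{2}$ are the identity on objects, $M(C')=\mathrm{Hom}_{\mathcal{C}/\mathcal{I}}(C',Z)$ and $N(C)=\mathrm{Hom}_{\mathcal{C}/\mathcal{I}}(Z',C)$, so the left-hand side evaluated at $(C',C)$ is $\mathrm{Hom}_{\mathcal{C}/\mathcal{I}}(C',Z)\otimes_{K}\mathrm{Hom}_{\mathcal{C}/\mathcal{I}}(Z',C)$.

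Next I would compute the right-hand side pointwise. Since $\pi_{2}\otimes \pi_{1}$ is the identity on objects, evaluating at $(C',C)$ gives $\mathrm{Hom}_{\mathcal{D}}\big((Z,Z'),(C',C)\big)$, and by the definition of the morphism spaces in the tensor $K$-category $\mathcal{D}=(\mathcal{C}/\mathcal{I})^{op}\otimes_{K}(\mathcal{C}/\mathcal{I})$ this equals $\mathrm{Hom}_{(\mathcal{C}/\mathcal{I})^{op}}(Z,C')\otimes_{K}\mathrm{Hom}_{\mathcal{C}/\mathcal{I}}(Z',C)=\mathrm{Hom}_{\mathcal{C}/\mathcal{I}}(C',Z)\otimes_{K}\mathrm{Hom}_{\mathcal{C}/\mathcal{I}}(Z',C)$. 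Thus both sides agree on objects, and the candidate isomorphism is the one furnished by Proposition \ref{projinenvelop}.

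The genuine content, and the step I expect to be the main obstacle, is naturality: one must check that this family of $K$-space isomorphisms is compatible with the action of morphisms, i.e. that it is an isomorphism in $\mathrm{Mod}(\mathcal{C}^{op}\otimes_{K}\mathcal{C})$ rather than merely a pointwise coincidence. To do this I would fix a morphism $(g,h)\colon (C',C)\to (C_{1}',C_{1})$ in $\mathcal{C}^{op}\otimes_{K}\mathcal{C}$ and trace it through both descriptions. On the right-hand side its action is post-composition by the classes $\pi_{2}(g)$ and $\pi_{1}(h)$ inside $\mathcal{D}$; on the left-hand side, unwinding Definition \ref{prodtensorconcat} and the evaluation functor underlying $\boxtimes_{\mathcal{C}_{K}}$, the action is again post-composition by $\pi_{2}(g)$ on the first tensor factor and by $\pi_{1}(h)$ on the second. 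Invoking the naturality clause of Proposition \ref{projinenvelop} (equivalently, the naturality of Mitchell's isomorphism $F\otimes_{\mathcal{C}}\mathcal{C}(-,X)\simeq F(X)$) these two actions coincide, so the pointwise isomorphisms assemble into the desired isomorphism of $\mathcal{C}^{op}\otimes_{K}\mathcal{C}$-modules. Statement (a) then follows verbatim with $\mathrm{Hom}_{\mathcal{D}}(-,(U,U'))$ in place of $\mathrm{Hom}_{\mathcal{D}}((Z,Z'),-)$, the only change being the variance bookkeeping handled by the duality $(\mathcal{C}/\mathcal{I})^{op}\simeq \mathcal{C}^{op}/\mathcal{I}^{op}$.
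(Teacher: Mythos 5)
The paper's own proof of this lemma is the single word ``Straightforward,'' and your proposal is exactly that verification carried out correctly: identify the left-hand side with $M\ten_{K}N$, match both sides on objects, check that the two module structures agree, and obtain (a) from (b) by passing to $(\mathcal{C}^{op},\mathcal{I}^{op})$ and swapping factors. One blemish should be repaired. You justify the evaluation formula $(M\ten_{K}N)(C',C)\simeq M(C')\otimes_{K}N(C)$ by citing Proposition \ref{projinenvelop}, but that proposition assumes $M$ and $N$ are projective, and the modules you feed into it, $\mathrm{Hom}_{\mathcal{C}/\mathcal{I}}(-,Z)\circ\pi_{2}$ and $\mathrm{Hom}_{\mathcal{C}/\mathcal{I}}(Z',-)\circ\pi_{1}$, are quotients of representables and are in general \emph{not} projective (that is exactly the situation the surrounding results are designed for). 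Fortunately the formula needs no projectivity: by Definition \ref{prodtensorconcat}, $(M\ten_{K}N)(C)=\overline{M}\otimes_{\mathcal{C}_{K}}\underline{N}(-,C)$, and since $\mathcal{C}_{K}$ has a single object with endomorphism ring $K$ and $K$ is a field, the right $\mathcal{C}_{K}$-module $\underline{N}(-,C)$ is a coproduct of copies of the representable $\mathcal{C}_{K}(-,\ast)$; as $\overline{M}\otimes_{\mathcal{C}_{K}}-$ preserves coproducts and $\overline{M}\otimes_{\mathcal{C}_{K}}\mathcal{C}_{K}(-,\ast)\simeq M$, one gets the copower $N(C)\cdot M$, whose value at $C'$ is $M(C')\otimes_{K}N(C)$. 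Replace the citation by this one-line computation and the proof is complete. A last, minor remark: in (a) the right-hand side is contravariant on $\mathcal{C}^{op}\otimes_{K}\mathcal{C}$, so the stated isomorphism only makes sense after the identification $(\mathcal{C}^{op}\otimes_{K}\mathcal{C})^{op}\simeq\mathcal{C}\otimes_{K}\mathcal{C}^{op}\simeq\mathcal{C}^{op}\otimes_{K}\mathcal{C}$, and the pointwise matching then also uses the symmetry $V\otimes_{K}W\simeq W\otimes_{K}V$; your reduction of (a) to (b) handles precisely this, so ``follows verbatim'' is the only overstatement.
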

\begin{proof}
It is Straightforward.
\end{proof}

\begin{proposition}\label{piehomoloepi}
Let $\mathcal{C}$ be a $K$-category and  $\pi_{1}:\mathcal{C} \longrightarrow \mathcal{C} /\mathcal{I}$ be a homological epimorphism. Thus,
$\pi^{e}:=\pi_{2}\otimes\pi_{1}:\mathcal{C}^{op}\otimes_{K}\mathcal{C}\longrightarrow (\mathcal{C}/\mathcal{I})^{op}\otimes_{K}\mathcal{C}/\mathcal{I}$ is a homological epimorphism.
\end{proposition}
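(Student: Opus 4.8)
The plan is to verify the Tor-vanishing characterization of Proposition \ref{caractidem2}, condition (f), for the functor $\pi^{e}$. Set $\mathcal{E}:=\mathcal{C}^{op}\otimes_{K}\mathcal{C}$, $\mathcal{D}:=(\mathcal{C}/\mathcal{I})^{op}\otimes_{K}\mathcal{C}/\mathcal{I}$, and let $\mathcal{I}^{e}$ be the kernel ideal of $\pi^{e}$, so that $\mathcal{E}/\mathcal{I}^{e}=\mathcal{D}$. By Proposition \ref{pipiophomo} the hypothesis that $\pi_{1}$ is a homological epimorphism also yields that $\pi_{2}:\mathcal{C}^{op}\longrightarrow\mathcal{C}^{op}/\mathcal{I}^{op}$ is one, so I may use both $\pi_{1}$ and $\pi_{2}$. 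By Proposition \ref{caractidem2} it suffices to prove $\mathbb{TOR}_{i}^{\mathcal{E}^{op}}(\mathcal{D},P\circ\pi^{e})=0$ for all $i\geq 1$ and all projective $P\in\mathrm{Mod}(\mathcal{D})$. Since $\mathbb{TOR}$ commutes with coproducts and direct summands, and every projective of $\mathrm{Mod}(\mathcal{D})$ is a summand of a coproduct of representables, it is enough to treat $P=\mathrm{Hom}_{\mathcal{D}}((Z,Z'),-)$.

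Evaluating at an object $(W,W')$ and using Remark \ref{descEXT}(b), the vanishing to establish is
$$\mathrm{Tor}_{i}^{\mathcal{E}^{op}}\Big(\tfrac{\mathrm{Hom}_{\mathcal{E}}(-,(W,W'))}{\mathcal{I}^{e}(-,(W,W'))},\ \mathrm{Hom}_{\mathcal{D}}((Z,Z'),-)\circ\pi^{e}\Big)=0\qquad(i\geq 1).$$
The key observation is that \emph{both} arguments are external tensor products in the sense of Definition \ref{prodtensorconcat}. By Lemma \ref{tensordomhom}(b) the second argument is $\big(\mathrm{Hom}_{\mathcal{C}/\mathcal{I}}(-,Z)\circ\pi_{2}\big)\ten_{K}\big(\mathrm{Hom}_{\mathcal{C}/\mathcal{I}}(Z',-)\circ\pi_{1}\big)$, with the first factor in $\mathrm{Mod}(\mathcal{C}^{op})$ and the second in $\mathrm{Mod}(\mathcal{C})$; and, using the canonical identifications $\mathcal{E}^{op}\cong\mathcal{C}\otimes_{K}\mathcal{C}^{op}$ and $(\mathcal{C}/\mathcal{I})^{op}\simeq\mathcal{C}^{op}/\mathcal{I}^{op}$, the opposite-category version of Lemma \ref{tensordomhom}(a) exhibits $\tfrac{\mathrm{Hom}_{\mathcal{E}}(-,(W,W'))}{\mathcal{I}^{e}(-,(W,W'))}$ as a $\ten_{K}$-product of restricted representables as well.

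The heart of the matter is a Künneth isomorphism for Tor over the tensor product category $\mathcal{E}^{op}\cong\mathcal{C}\otimes_{K}\mathcal{C}^{op}$. Applying the Cartan--Eilenberg-type associativity of Proposition \ref{isocartan} with the one-object category $\mathcal{C}_{K}$ in the middle role (so that $\ten_{K}=\boxtimes_{\mathcal{C}_{K}}$), and noting that over the field $K$ the hypotheses of Propositions \ref{resolproyten} and \ref{projinenvelop} are automatic, so that $K$-tensor products of projective resolutions remain resolutions, I obtain an isomorphism of the form
$$\mathrm{Tor}_{n}^{\mathcal{E}^{op}}\big(X_{1}\ten_{K}X_{2},\,Y_{1}\ten_{K}Y_{2}\big)\simeq\bigoplus_{p+q=n}\mathrm{Tor}_{p}^{\mathcal{C}}(X_{1},Y_{1})\otimes_{K}\mathrm{Tor}_{q}^{\mathcal{C}^{op}}(X_{2},Y_{2}),$$
where $Y_{1}=\mathrm{Hom}_{\mathcal{C}/\mathcal{I}}(-,Z)\circ\pi_{2}$ and $Y_{2}=\mathrm{Hom}_{\mathcal{C}/\mathcal{I}}(Z',-)\circ\pi_{1}$ come from the decomposition of the second argument and $X_{1},X_{2}$ from that of the first. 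Since $\mathrm{Hom}_{\mathcal{C}/\mathcal{I}}(-,Z)$ is projective in $\mathrm{Mod}((\mathcal{C}/\mathcal{I})^{op})$ and $\pi_{2}$ is a homological epimorphism, Proposition \ref{caractidem2}(d) identifies $\mathrm{Tor}_{p}^{\mathcal{C}}(X_{1},Y_{1})$ with a Tor over $\mathcal{C}^{op}/\mathcal{I}^{op}$ taken against a projective, hence it vanishes for $p\geq 1$; symmetrically, since $\mathrm{Hom}_{\mathcal{C}/\mathcal{I}}(Z',-)$ is projective and $\pi_{1}$ is a homological epimorphism, $\mathrm{Tor}_{q}^{\mathcal{C}^{op}}(X_{2},Y_{2})=0$ for $q\geq 1$. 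Consequently, for $n\geq 1$ each summand (in which $p\geq 1$ or $q\geq 1$) vanishes, which yields the required vanishing and shows that $\pi^{e}$ is a homological epimorphism.

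The step I expect to be most delicate is making the Künneth isomorphism precise. One must carefully track the several canonical equivalences ($\mathcal{E}^{op}\cong\mathcal{C}\otimes_{K}\mathcal{C}^{op}$, the interchange of the two tensor factors, and $(\mathcal{C}/\mathcal{I})^{op}\simeq\mathcal{C}^{op}/\mathcal{I}^{op}$) so that the two Künneth factors genuinely correspond to $\pi_{2}$ and $\pi_{1}$ with the correct variances, and one must verify that the $\mathrm{Tor}$-vanishing hypotheses of Proposition \ref{isocartan} are automatic over the field $K$, so that no higher correction terms survive. Once this bookkeeping is in place, the conclusion is immediate from the homological-epimorphism property of $\pi_{1}$ and $\pi_{2}$ together with the projectivity of representable functors.
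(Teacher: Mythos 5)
Your proposal follows the same skeleton as the paper's proof --- reduction via Proposition \ref{caractidem2}(f) to Tor-vanishing against representable projectives, evaluation via Remark \ref{descEXT}(b), use of Proposition \ref{pipiophomo} to have $\pi_{2}$ available, and decomposition of restricted representables via Lemma \ref{tensordomhom} --- but the central computation is genuinely different. The paper decomposes only the \emph{first} argument as $\overline{\mathrm{Hom}_{\mathcal{C}/\mathcal{I}}(-,U')\circ\pi_{2}}\boxtimes_{\mathcal{C}_{K}}\underline{\mathrm{Hom}_{\mathcal{C}/\mathcal{I}}(U,-)\circ\pi_{1}}$, keeps the second argument whole, and applies the associativity of Proposition \ref{isocartan} once: its Tor-vanishing hypothesis is verified using that $\pi_{2}$ is a homological epimorphism, the box-product is then shifted onto the second argument, the resulting functor is identified as a restriction $Q\circ\pi_{1}$, and the remaining Tor is killed because $\pi_{1}$ is a homological epimorphism. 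You instead decompose \emph{both} arguments and invoke a symmetric K\"unneth sum decomposition, so that for $n\geq 1$ every summand dies because one of its two factors does. Your route is more symmetric and arguably cleaner; the paper's route never needs a K\"unneth theorem, at the price of the intermediate object $Q\circ\pi_{1}$.

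The one point you must repair is the justification of the K\"unneth isomorphism. Proposition \ref{isocartan} is an associativity isomorphism of single Tor groups; it cannot by itself produce the direct sum $\bigoplus_{p+q=n}$. Moreover, your claim that its Tor-vanishing hypotheses ``are automatic over the field $K$'' is only half true: with $\mathcal{C}_{K}$ in the middle role the hypothesis $\mathrm{Tor}_{n}^{\mathcal{C}_{K}}(F,G(-,D))=0$ is indeed automatic, but the \emph{second} hypothesis of Proposition \ref{isocartan} pairs two honest $\mathcal{C}$- (or $\mathcal{C}^{op}$-) modules and is exactly the place where the homological-epimorphism property of $\pi_{2}$ must be fed in --- this is precisely what the paper verifies before invoking Proposition \ref{isocartan}. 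There are two honest fixes: either prove the K\"unneth formula directly (tensor projective resolutions of the two factors; Propositions \ref{resolproyten} and \ref{projinenvelop} show the total complex is a projective resolution, and the classical K\"unneth theorem for complexes of $K$-vector spaces gives the sum decomposition), or drop the sum decomposition entirely and use only the degenerate consequence of Proposition \ref{isocartan}, verifying its second hypothesis with the $\pi_{2}$-vanishing that you already establish as one of your two factor computations. With either repair, your factor-vanishing arguments via Proposition \ref{caractidem2}(d) (applied to $\pi_{1}$, and to $\pi_{2}$ through Proposition \ref{pipiophomo}), together with the variance bookkeeping you flag, are correct, and the proof goes through.
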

\begin{proof}
Let $\mathcal{J}:=\mathrm{Ker}(\pi^{e})$ be, we can identify $\pi^{e}$ with the canonical epimorphism $\Pi: \mathcal{C}^{e}\longrightarrow \mathcal{C}^{e}/\mathcal{J}.$
Then, it is then sufficient to see that $\Pi: \mathcal{C}^{e}\longrightarrow \mathcal{C}^{e}/\mathcal{J}$ is a homological epimorphism.
For an object $X:=(U,U')\in (\mathcal{C}^{op}\otimes_{K}\mathcal{C})/\mathcal{J}=\mathcal{C}^{e}/\mathcal{J}\simeq (\mathcal{C}/\mathcal{I})^{op}\otimes_{K}(\mathcal{C}/\mathcal{I})$, we have an isomorphism  by Lemma \ref{tensordomhom} (a):
\begin{align*}
\overline{\mathrm{Hom}_{\mathcal{C}/\mathcal{I}}(-,U')\circ \pi_{2}}\boxtimes_{\mathcal{C}_{K}} \underline{\mathrm{Hom}_{\mathcal{C}/\mathcal{I}}(U,-)\circ \pi_{1}} & \simeq \mathrm{Hom}_{\mathcal{C}^{e}/\mathcal{J}}\Big(-,(U,U')\Big)\circ (\pi^{e})\\& \simeq \mathrm{Hom}_{\mathcal{C}^{e}}(-,X)/\mathcal{J}(-,X).
\end{align*}
By Proposition \ref{caractidem2}, we must to show that $\mathbb{TOR}_{i}^{(\mathcal{C}^{e})^{op}}\Big(\mathcal{C}^{e}/\mathcal{J},P\circ \Pi\Big)=0$ for all $P\in \mathrm{Mod}(\mathcal{C}^{e}/\mathcal{J})$ that is projective. It is enough to take $P:=\mathrm{Hom}_{\mathcal{C}^{e}/\mathcal{J}}\Big(Y,-\Big)$ for $Y\in \mathcal{C}^{e}/\mathcal{J}$. We then have that
\begin{align*}
& \mathbb{TOR}_{i}^{(\mathcal{C}^{e})^{op}}\Big(\mathcal{C}^{e}/\mathcal{J},\mathrm{Hom}_{\mathcal{C}^{e}}(Y,-)/\mathcal{J}(Y,-)\Big)(X)=\\
& =\mathrm{Tor}_{i}^{\mathcal{C}\otimes_{K}\mathcal{C}^{op}}\Big(\mathrm{Hom}_{\mathcal{C}^{e}}(-,X)/\mathcal{J}(-,X),\mathrm{Hom}_{\mathcal{C}^{e}}(Y,-)/\mathcal{J}(Y,-)\Big)\\
& =\mathrm{Tor}_{i}^{\mathcal{C}\otimes_{K}\mathcal{C}^{op}}\!\Big(\overline{\mathrm{Hom}_{\mathcal{C}/\mathcal{I}}(-,U')\circ \!\pi_{2}}\boxtimes_{\mathcal{C}_{K}}\underline{\mathrm{Hom}_{\mathcal{C}/\mathcal{I}}(U,-)\circ\! \pi_{1}}
, \mathrm{Hom}_{\mathcal{C}^{e}}(Y,-)/\mathcal{J}(Y,-)\Big)
\end{align*}
Let us check that the hypothesis of Proposition \ref{isocartan} holds.\\
Firstly, $\mathrm{Tor}_{n}^{\mathcal{C}_{K}}\Big(\overline{\mathrm{Hom}_{\mathcal{C}/\mathcal{I}}(-,U')\circ \pi_{2}},(\underline{\mathrm{Hom}_{\mathcal{C}/\mathcal{I}}(U,-)\circ \pi_{1}})(-,D)\Big)=0$ for all $D\in \mathcal{C}$ since $K$ is a field.\\
We will now show that $\mathrm{Tor}_{i}^{\mathcal{C}}\!\Big(\!(\underline{\mathrm{Hom}_{\mathcal{C}/\mathcal{I}}(U,-)\!\circ \!\pi_{1}}),\mathrm{Hom}_{\mathcal{C}^{e}}(Y,-)/\mathcal{J}(Y,-)(-,E')\Big)=0$
for all $E'\in \mathcal{C}$.\\
It can be easily seen that there exists $W\in \mathrm{Mod}(\mathcal{C}^{op}/\mathcal{I}^{op})$ such that $W\circ \pi_{2}\simeq \mathrm{Hom}_{\mathcal{C}^{e}}(Y,-)/\mathcal{J}(Y,-)(-,E').$ Hence, we obtain
\begin{align*}
\mathrm{Tor}_{i}^{\mathcal{C}}\!\Big(\underline{\mathrm{Hom}_{\mathcal{C}/\mathcal{I}}(U,-)\!\circ \!\pi},\mathrm{Hom}_{\mathcal{C}^{e}}(Y,-)/\mathcal{J}(Y,-)(-,E')\!\Big) & \!\!\simeq\!\mathrm{Tor}_{i}^{\mathcal{C}}\Big(\frac{\mathrm{Hom}_{\mathcal{C}}(U,-)}{\mathcal{I}(U,-)},W\!\!\circ\!\pi_{2}\!\Big)\\
& =0,
\end{align*}
where the last equality is because  $\pi_{2}$ is a homological epimorphism. Therefore, by Proposition \ref{isocartan} we have that
\begin{align*}
& \mathrm{Tor}_{i}^{\mathcal{C}\otimes_{K}\mathcal{C}^{op}}\Big(\overline{\mathrm{Hom}_{\mathcal{C}/\mathcal{I}}(-,U')\!\circ \!\pi_{2}}\boxtimes_{\mathcal{C}_{K}}\underline{\mathrm{Hom}_{\mathcal{C}/\mathcal{I}}(U,-)\circ \pi_{1}}
,\mathrm{Hom}_{\mathcal{C}^{e}}(Y,-)/\mathcal{J}(Y,-)\Big)\!=\\
&\! =\!\mathrm{Tor}_{i}^{\mathcal{C}^{op}\otimes \mathcal{C}_{K}}\!\Big(\overline{\mathrm{Hom}_{\mathcal{C}/\mathcal{I}}(-,U')\circ \pi_{2}},\,\,\,\,\underline{\mathrm{Hom}_{\mathcal{C}/\mathcal{I}}(U,-)\circ \pi_{1}}\boxtimes_{\mathcal{C}}
\mathrm{Hom}_{\mathcal{C}^{e}}(Y,-)/\mathcal{J}(Y,-)\!\Big)\\
& \!=\!\mathrm{Tor}_{i}^{\mathcal{C}^{op}}\!\Big(\mathrm{Hom}_{\mathcal{C}/\mathcal{I}}(-,U')\circ \pi_{2},\,\,\,\big(\mathrm{Hom}_{\mathcal{C}/\mathcal{I}}(U,-)\circ \pi_{1}\big)\boxtimes_{\mathcal{C}}
\big(\mathrm{Hom}_{\mathcal{C}^{e}}(Y,-)/\mathcal{J}(Y,-)\big)\!\Big)
\end{align*}
It can be seen that there exists $Q\in \mathrm{Mod}(\mathcal{C}/ \mathcal{I})$ such that $Q\circ \pi_{1}\simeq \big(\mathrm{Hom}_{\mathcal{C}/\mathcal{I}}(U,-)\circ \pi_{1}\big)\boxtimes_{\mathcal{C}}
\big(\mathrm{Hom}_{\mathcal{C}^{e}}(Y,-)/\mathcal{J}(Y,-)\big)$.
Therefore,

\begin{align*}
& \mathrm{Tor}_{i}^{\mathcal{C}^{op}}\Big(\mathrm{Hom}_{\mathcal{C}/\mathcal{I}}(-,U')\circ \pi_{2},\,\,\,\big(\mathrm{Hom}_{\mathcal{C}/\mathcal{I}}(U,-)\circ \pi_{1}\big)\boxtimes_{\mathcal{C}}
\big(\mathrm{Hom}_{\mathcal{C}^{e}}(Y,-)/\mathcal{J}(Y,-)\big)\Big)\\
& \simeq  \mathrm{Tor}_{i}^{\mathcal{C}^{op}}\Big(\mathrm{Hom}_{\mathcal{C}/\mathcal{I}}(-,U')\circ \pi_{2},\,\,\,  Q\circ \pi_{1}\Big) \simeq  \mathrm{Tor}_{i}^{\mathcal{C}^{op}}\Big(\frac{\mathrm{Hom}_{\mathcal{C}}(-,U')}{\mathcal{I}(-,U')},\,\,\,  Q\circ \pi_{1}\Big)=0,
\end{align*}
where the last equality holds because $\pi_{1}$ is a homological epimorphism and because of Proposition \ref{caractidem2}. \\
We have proven that $\mathbb{TOR}_{i}^{(\mathcal{C}^{e})^{op}}\Big(\mathcal{C}^{e}/\mathcal{J},\mathrm{Hom}_{\mathcal{C}^{e}}(Y,-)/\mathcal{J}(Y,-)\Big)(X)=0.$
We then have that $\mathbb{TOR}_{i}^{(\mathcal{C}^{e})^{op}}\Big(\mathcal{C}^{e}/\mathcal{J},\mathrm{Hom}_{\mathcal{C}^{e}}(Y,-)/\mathcal{J}(Y,-)\Big)=0$. By Proposition \ref{caractidem2}, we conclude that 
$\Pi:\mathcal{C}^{e}\longrightarrow \mathcal{C}^{e}/\mathcal{J}$ is a homological epimorphism.
\end{proof}

\section{Homological epimorphisms and Hochschild Mitchell-Cohomology}

Consider $\mathcal{I}$  an ideal of $\mathcal{C}$ and  $\Phi:\mathcal{C}\longrightarrow \mathcal{C}/\mathcal{I}=\mathcal{B}$ the canonical epimorphism. Consider
$H:=\mathcal{B}(-,-)\circ (\Phi^{op}\otimes \Phi)$. Thus, we obtain a morphism in $\mathrm{Mod}(\mathcal{C}^{e})$:
$$\Gamma(\Phi):\mathcal{C}(-,-)\longrightarrow \mathcal{B}(-,-)\circ (\Phi^{op}\otimes \Phi)$$
such that for an object $(C,C')\in \mathcal{C}^{e}$ we have that $[\Gamma(\Phi)]_{(C,C')}:\mathcal{C}(C,C')\longrightarrow \mathcal{B}(\Phi(C),\Phi(C'))$ is defined as $[\Gamma(\Phi)]_{(C,C')}(f):=\Phi(f)$ for all $f\in \mathcal{C}(C,C')$. Thus, we obtain the following exact sequence in $\mathrm{Mod}(\mathcal{C}^{e})$:

\begin{equation}\label{succanonica}
\xymatrix{0\ar[r] & \mathcal{I}\ar[r] & \mathcal{C}\ar[r]^{\Gamma(\Phi)} & H \ar[r]  & 0.}
\end{equation}

Now we have the following result.
\begin{lemma}\label{morfiscerocc}
Let $\mathcal{I}$ be a strongly idempotent ideal of $\mathcal{C}$, $\Phi:\mathcal{C}\longrightarrow \mathcal{C}/\mathcal{I}=\mathcal{B}$ the canonical epimorphism, and consider the following exact sequence in $\mathrm{Mod}(\mathcal{C}^{e})$:
$$\xymatrix{0\ar[r] & \mathcal{I}\ar[r] & \mathcal{C}\ar[r]^{\Gamma(\Phi)} & H \ar[r]  & 0.}$$
Hence, $\mathrm{Ext}_{\mathrm{Mod}(\mathcal{C})}^{i}\Big(\mathcal{I}\Big(C,-\Big),\,\, H\Big(C'',-\Big)\Big)=0$
 for all $C$, $C''\in \mathcal{C}$ and for all $i\geq 0$.
 \end{lemma}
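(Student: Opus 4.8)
The plan is to evaluate the canonical sequence (\ref{succanonica}) at its first variable and feed the resulting short exact sequence into the long exact sequence for $\mathrm{Ext}$, using strong idempotency to annihilate the relevant terms. First I would fix $C\in\mathcal{C}$ and restrict (\ref{succanonica}) to the second variable; since evaluating a $\mathcal{C}^{e}$-module at a fixed object of the first variable is exact, this yields a short exact sequence
$$\xymatrix{0\ar[r] & \mathcal{I}(C,-)\ar[r]^{\iota} & \mathcal{C}(C,-)\ar[r]^{p} & H(C,-)\ar[r] & 0}$$
in $\mathrm{Mod}(\mathcal{C})$. Here $\mathcal{C}(C,-)$ is representable, hence projective, and $H(C,-)=\frac{\mathcal{C}(C,-)}{\mathcal{I}(C,-)}=(\pi_{1})_{\ast}(P_{C})$ with $P_{C}:=\mathrm{Hom}_{\mathcal{B}}(C,-)$ projective in $\mathrm{Mod}(\mathcal{B})$. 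Likewise I would record that the coefficient module is $M:=H(C'',-)=(\pi_{1})_{\ast}(P_{C''})$ with $P_{C''}:=\mathrm{Hom}_{\mathcal{B}}(C'',-)$ projective in $\mathrm{Mod}(\mathcal{B})$.

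Next I would apply $\mathrm{Hom}_{\mathrm{Mod}(\mathcal{C})}(-,M)$ to this short exact sequence and examine the associated long exact sequence. Two vanishing inputs drive the argument. On one hand, $\mathrm{Ext}^{i}_{\mathrm{Mod}(\mathcal{C})}(\mathcal{C}(C,-),M)=0$ for $i\geq 1$ because $\mathcal{C}(C,-)$ is projective. On the other hand, strong idempotency of $\mathcal{I}$ (Definition \ref{kidemcat}) furnishes isomorphisms $\mathrm{Ext}^{j}_{\mathrm{Mod}(\mathcal{C})}(H(C,-),M)=\mathrm{Ext}^{j}_{\mathrm{Mod}(\mathcal{C})}((\pi_{1})_{\ast}(P_{C}),(\pi_{1})_{\ast}(P_{C''}))\simeq\mathrm{Ext}^{j}_{\mathrm{Mod}(\mathcal{B})}(P_{C},P_{C''})$ for every $j\geq 0$, and the right-hand side vanishes for $j\geq 1$ since $P_{C}$ is projective in $\mathrm{Mod}(\mathcal{B})$. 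For $i\geq 1$ the term $\mathrm{Ext}^{i}_{\mathrm{Mod}(\mathcal{C})}(\mathcal{I}(C,-),M)$ then sits in the long exact sequence between $\mathrm{Ext}^{i}_{\mathrm{Mod}(\mathcal{C})}(\mathcal{C}(C,-),M)=0$ and $\mathrm{Ext}^{i+1}_{\mathrm{Mod}(\mathcal{C})}(H(C,-),M)=0$, and is hence zero.

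The remaining, and most delicate, case is $i=0$. Here I would use the low-degree part of the long exact sequence
$$\mathrm{Hom}_{\mathrm{Mod}(\mathcal{C})}(\mathcal{C}(C,-),M)\xrightarrow{\ \iota^{\ast}\ }\mathrm{Hom}_{\mathrm{Mod}(\mathcal{C})}(\mathcal{I}(C,-),M)\longrightarrow \mathrm{Ext}^{1}_{\mathrm{Mod}(\mathcal{C})}(H(C,-),M)=0,$$
whose exactness forces the restriction map $\iota^{\ast}$ to be surjective. It therefore suffices to show $\iota^{\ast}=0$, i.e. that the restriction to $\mathcal{I}(C,-)$ of every natural transformation $\eta\colon\mathcal{C}(C,-)\to M$ vanishes. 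By the Yoneda lemma, $\eta$ is determined by $m:=\eta_{C}(\mathrm{id}_{C})\in M(C)=\mathcal{B}(C'',C)$ and satisfies $\eta_{X}(g)=M(g)(m)=\Phi(g)\circ m$ for $g\in\mathcal{C}(C,X)$; when $g\in\mathcal{I}(C,X)$ one has $\Phi(g)=0$ in $\mathcal{B}(C,X)$, so $\eta_{X}(g)=0$. Thus $\iota^{\ast}=0$, and combined with its surjectivity this yields $\mathrm{Hom}_{\mathrm{Mod}(\mathcal{C})}(\mathcal{I}(C,-),M)=0$, completing the case $i=0$.

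I expect the $i=0$ step to be the main obstacle: unlike the higher degrees, it is not settled by a sandwich of zeros. One must both exhibit that restriction kills every morphism out of $\mathcal{I}(C,-)$ (the elementary computation above, using only that $\mathcal{I}$-morphisms die in the quotient) and invoke $\mathrm{Ext}^{1}_{\mathrm{Mod}(\mathcal{C})}(H(C,-),M)=0$ — itself a consequence of strong idempotency via Definition \ref{kidemcat} — to guarantee that every morphism $\mathcal{I}(C,-)\to M$ actually extends along $\iota$ and is therefore accounted for by $\iota^{\ast}$.
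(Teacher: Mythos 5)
Your proof is correct, and its skeleton is the same as the paper's: evaluate the canonical sequence at $C$ to get $0\to\mathcal{I}(C,-)\to\mathcal{C}(C,-)\to H(C,-)\to 0$, apply $\mathrm{Hom}_{\mathrm{Mod}(\mathcal{C})}(-,H(C'',-))$, use strong idempotency to kill $\mathrm{Ext}^{j}_{\mathrm{Mod}(\mathcal{C})}(H(C,-),H(C'',-))$ for $j\geq 1$, and sandwich $\mathrm{Ext}^{i}(\mathcal{I}(C,-),-)$ between zeros for $i\geq 1$. The genuine divergence is in the $i=0$ step, where your argument is more elementary than the paper's. The paper proves that precomposition $\alpha=-\circ\Psi\colon\mathrm{Hom}(H(C,-),M)\to\mathrm{Hom}(\mathcal{C}(C,-),M)$ is surjective: it lifts $\eta_{C}(1_{C})$ through Yoneda on $\mathcal{B}$ to a map $\eta'\colon\mathrm{Hom}_{\mathcal{B}}(C,-)\to\mathrm{Hom}_{\mathcal{B}}(C'',-)$ and pushes it down along $(\pi_{1})_{\ast}$, which requires invoking the full faithfulness of $(\pi_{1})_{\ast}$ for the categorical epimorphism $\Phi$ (Krause's result \cite{Heninepi}). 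You instead prove the equivalent statement that the restriction map $\iota^{\ast}$ is zero, via the one-line computation $\eta_{X}(g)=H(C'',g)\big(\eta_{C}(1_{C})\big)=\Phi(g)\circ\eta_{C}(1_{C})=0$ for $g\in\mathcal{I}(C,X)$; by exactness at the middle term of the long exact sequence, ``$\alpha$ surjective'' and ``$\iota^{\ast}=0$'' are interchangeable, but your version needs nothing beyond Yoneda and the fact that $\Phi$ annihilates $\mathcal{I}$. Both arguments still require $\mathrm{Ext}^{1}(H(C,-),M)=0$ (to make $\iota^{\ast}$ surjective, respectively to put the terminal $0$ in the paper's sequence $(\ast\ast)$), and you correctly flag this as indispensable. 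A last cosmetic difference: you extract that vanishing directly from Definition \ref{kidemcat} applied to the projectives $\mathrm{Hom}_{\mathcal{B}}(C,-)$ and $\mathrm{Hom}_{\mathcal{B}}(C'',-)$, while the paper routes it through the $\mathbb{EXT}$ functors and Proposition \ref{caractidem2}; these are the same fact, and your route is the shorter one.
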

 \begin{proof}
 Firstly, let us see that $\mathrm{Hom}_{\mathrm{Mod}(\mathcal{C})}\Big(\mathcal{I}\Big(C,-\Big),\,\, H\Big(C'',-\Big)\Big)=0$. We note that $H(C'',-)=\frac{\mathcal{C}(C'',-)}{\mathcal{I}(C'',-)}$. For  $C \in \mathcal{C}^{op}$ consider the exact sequence in $\mathrm{Mod}(\mathcal{C})$
$$(\ast):\xymatrix{0\ar[r] & \mathcal{I}(C,-)\ar[r] & \mathcal{C}(C,-)\ar[r]^(.4){\Psi} & H(C,-)=\frac{\mathcal{C}(C,-)}{\mathcal{I}(C,-)}\ar[r] & 0}$$
where $\Psi=[\Gamma(\Phi)]_{(C,-)}$.\\
Since $\mathcal{I}$ is strongly idempotent, we have by  Proposition \ref{caractidem2}(c) that $\mathbb{EXT}^{i}_{\mathcal{C}}\Big(\mathcal{C}/\mathcal{I}, F'\circ \Phi\Big)=0$ for $i\geq 1$ and all $F'\in \mathrm{Mod}(\mathcal{C}/\mathcal{I})$.
 By  Remark \ref{descEXT}(a), for $C\in \mathcal{C}$ we have that
$$\mathbb{EXT}^{i}_{\mathcal{C}}\Big(\mathcal{C}/\mathcal{I}, F'\circ \Phi\Big)(C):=\mathrm{Ext}^{i}_{\mathrm{Mod}(C)}\Big(\frac{\mathcal{C}(C,-)}{\mathcal{I}(C,-)},F'\circ \Phi \Big)=0$$
for all $F'\in \mathrm{Mod}(\mathcal{C}/\mathcal{I})$. 
We then obtain
$$(\star):\mathrm{Ext}^{i}_{\mathrm{Mod}(C)}\Big(\frac{\mathcal{C}(C,-)}{\mathcal{I}(C,-)},\frac{\mathcal{C}(C'',-)}{\mathcal{I}(C'',-)}\Big)=0\quad \forall i\geq 1.$$
Therefore, by applying $\mathrm{Hom}_{\mathrm{Mod}(\mathcal{C})}\Big(-,\frac{\mathcal{C}(C'',-)}{\mathcal{I}(C'',-)}\Big)$ to the exact sequence above $(\ast)$, we have the following exact sequence in $\textbf{Ab}$:
$$(\ast\ast)\!\!:\!0\rightarrow \Big(H(C),\frac{\mathcal{C}(C'',-)}{\mathcal{I}(C'',-)}\Big)\stackrel{\alpha}{\rightarrow}\Big(\mathcal{C}(C,-),\frac{\mathcal{C}(C'',-)}{\mathcal{I}(C'',-)}\Big)\rightarrow  \Big(\mathcal{I}(C,-),\frac{\mathcal{C}(C'',-)}{\mathcal{I}(C'',-)}\Big)\rightarrow 0.$$
We assert that $\alpha:=-\circ \Psi$ is surjective. Indeed, consider $\eta\in  \Big(\mathcal{C}(C,-),\frac{\mathcal{C}(C'',-)}{\mathcal{I}(C'',-)}\Big)$. We have Yoneda's isomorphism $Y: \Big(\mathcal{C}(C,-),\frac{\mathcal{C}(C'',-)}{\mathcal{I}(C'',-)}\Big)\longrightarrow \frac{\mathcal{C}(C'',C)}{\mathcal{I}(C'',C)},$ so, for $\eta\in  \Big(\mathcal{C}(C,-),\frac{\mathcal{C}(C'',-)}{\mathcal{I}(C'',-)}\Big)$, we have that $Y(\eta)=\eta_{C}(1_{C})\in \frac{\mathcal{C}(C'',C)}{\mathcal{I}(C'',C)}$.\\
Since $\Phi$ is an epimorphism, we have that the functor $\Phi_{\ast}:\mathrm{Mod}(\mathcal{C}/\mathcal{I})\longrightarrow \mathrm{Mod}(\mathcal{C})$ is full and faithful (see for example \cite{Heninepi}). Then we have the following isomorphisms:

\begin{align*}
& \mathrm{Hom}_{\mathrm{Mod}(\mathcal{C}/\mathcal{I})}\Big(\mathrm{Hom}_{\mathcal{C}/\mathcal{I}}(C,-),\mathrm{Hom}_{\mathcal{C}/\mathcal{I}}(C'',-)\Big)\simeq \\
 & \simeq \mathrm{Hom}_{\mathrm{Mod}(\mathcal{C})}\Big(\mathrm{Hom}_{\mathcal{C}/\mathcal{I}}(C,-)\circ \Phi,\mathrm{Hom}_{\mathcal{C}/\mathcal{I}}(C'',-)\circ \Phi\Big)\\
& =\mathrm{Hom}_{\mathrm{Mod}(\mathcal{C})}\Big(\frac{\mathcal{C}(C,-)}{\mathcal{I}(C,-)},\frac{\mathcal{C}(C'',-)}{\mathcal{I}(C'',-)}\Big).
\end{align*}

By Yoneda's Lemma we have an isomorphism
$$Y'\!\!:\!\mathrm{Hom}_{\mathrm{Mod}(\mathcal{C}/\mathcal{I})}\Big(\!\!\mathrm{Hom}_{\mathcal{C}/\mathcal{I}}(C,-),\mathrm{Hom}_{\mathcal{C}/\mathcal{I}}(C'',-)\!\Big)\rightarrow \mathrm{Hom}_{\mathcal{C}/\mathcal{I}}(C'',C)\!=\!\frac{\mathcal{C}(C'',C)}{\mathcal{I}(C'',C)}.$$

$Y(\eta)=\eta_{C}(1_{C})\in \frac{\mathcal{C}(C'',C)}{\mathcal{I}(C'',C)}$ then determines  a natural transformation in $\mathrm{Mod}(\mathcal{C}/\mathcal{I})$
$$\eta':\mathrm{Hom}_{\mathcal{C}/\mathcal{I}}(C,-),\longrightarrow \mathrm{Hom}_{\mathcal{C}/\mathcal{I}}(C'',-)$$
such that $Y'(\eta')=\eta_{C}(1_{C})$. That is, $\eta'$ satisfies that $\eta'_{C}(1_{C}+\mathcal{I}(C,C))=\eta_{C}(1_{C})$.
Now consider the following natural transformation 
$$\delta:=\eta'\circ \Phi:\frac{\mathcal{C}(C,-)}{\mathcal{I}(C,-)}=\mathrm{Hom}_{\mathcal{C}/\mathcal{I}}(C,-)\circ \Phi\longrightarrow \frac{\mathcal{C}(C'',-)}{\mathcal{I}(C'',-)}=\mathrm{Hom}_{\mathcal{C}/\mathcal{I}}(C,-)\circ \Phi$$ in $\mathrm{Mod}(\mathcal{C})$. We then have the following natural transformation in $\mathrm{Mod}(\mathcal{C})$:
$$\xymatrix{\mathcal{C}(C,-)\ar[r]^{\Psi} & \frac{\mathcal{C}(C,-)}{\mathcal{I}(C,-)}\ar[r]^{\delta} & \frac{\mathcal{C}(C'',-)}{\mathcal{I}(C'',-)}.}$$


It is easy to see that  $\delta \circ \Psi=\eta$. This proves that $\alpha=-\circ \Psi$ is surjective, and hence from the exact sequence $(\ast\ast)$ we get that  $\mathrm{Hom}_{\mathrm{Mod}(\mathcal{C})}\Big(\mathcal{I}\Big(C,-\Big),\,\, H\Big(C'',-\Big)\Big)=0$.\\
Now, by applying $\mathrm{Hom}_{\mathrm{Mod}(\mathcal{C})}(-,H(C''))$ to the exact sequence $(\ast)$, we obtain the long exact sequence of homology. Furthermore, by using that $\mathcal{C}(C,-)$ is projective in $\mathrm{Mod}(\mathcal{C})$, we conclude for $i\geq 1$ the following isomorphism:
$$\mathrm{Ext}_{\mathrm{Mod}(\mathcal{C})}^{i}\Big(\mathcal{I}(C,-),\frac{\mathcal{C}(C'',-)}{\mathcal{I}(C'',-)}\Big)\simeq \mathrm{Ext}_{\mathrm{Mod}(\mathcal{C})}^{i+1}\Big(H(C),\frac{\mathcal{C}(C'',-)}{\mathcal{I}(C'',-)}\Big)=0,$$
where the last equality is by the equality $(\star)$ above since $\mathcal{I}$ is strongly idempotent.



 \end{proof}

\begin{corollary}\label{HomIH=0}
Let $\mathcal{I}$ be a strongly idempotent ideal of $\mathcal{C}$. Then
\begin{enumerate}
\item [(a)]
We have that $\mathrm{Hom}_{\mathrm{Mod}(\mathcal{C}^{e})}(\mathcal{I},H)=0$.

\item [(b)] Consider the functor $\mathrm{Hom}_{\mathrm{Mod}(\mathcal{C})}(\mathcal{I},H):\mathcal{C}^{op}\otimes_{K}\mathcal{C} \longrightarrow  \bf{Ab}$
defined as
$$(\mathrm{Hom}_{\mathrm{Mod}(\mathcal{C})}(\mathcal{I},H))\Big(C,C''\Big):=\mathrm{Hom}_{\mathrm{Mod}(\mathcal{C})}\Big(\mathcal{I}\Big(C,-\Big),H\Big(C'',-\Big)\Big),$$
for $(C,C'')\in \mathcal{C}^{op}\otimes_{K}\mathcal{C}$.
Thus, $\mathrm{Hom}_{\mathrm{Mod}(\mathcal{C})}(\mathcal{I},H):\mathcal{C}^{op}\otimes_{K}\mathcal{C}\longrightarrow  \bf{Ab}$ is the zero functor.
\end{enumerate}
\end{corollary}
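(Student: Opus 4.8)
The plan is to deduce both parts directly from Lemma \ref{morfiscerocc}, which is the only nontrivial input; this corollary is essentially a repackaging of the $i=0$ case of that lemma, once one is careful about the variances involved.

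For part (b) I would simply evaluate on objects. By construction the functor $\mathrm{Hom}_{\mathrm{Mod}(\mathcal{C})}(\mathcal{I},H)$ takes the value $\mathrm{Hom}_{\mathrm{Mod}(\mathcal{C})}\big(\mathcal{I}(C,-),H(C'',-)\big)$ at a pair $(C,C'')\in\mathcal{C}^{op}\otimes_K\mathcal{C}$, and Lemma \ref{morfiscerocc} with $i=0$ asserts precisely that this $K$-vector space vanishes for every such pair. A functor whose value on every object is $0$ is the zero functor, so (b) follows at once.

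For part (a) I would analyze an arbitrary morphism $\phi\in\mathrm{Hom}_{\mathrm{Mod}(\mathcal{C}^{e})}(\mathcal{I},H)$ through its restrictions to the diagonal. Such a $\phi$ is a family of components $\phi_{(C,C')}:\mathcal{I}(C,C')\to H(C,C')$, natural in $(C,C')\in\mathcal{C}^{op}\otimes_K\mathcal{C}$. Fixing the first variable $C$ and imposing naturality only along the morphisms of the form $(1_C,\beta)$, the family $\{\phi_{(C,C')}\}_{C'}$ is natural in $C'$ and therefore defines a morphism $\mathcal{I}(C,-)\to H(C,-)$ in $\mathrm{Mod}(\mathcal{C})$. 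By part (b) (equivalently, by Lemma \ref{morfiscerocc} in the case $C''=C$, $i=0$) the group $\mathrm{Hom}_{\mathrm{Mod}(\mathcal{C})}\big(\mathcal{I}(C,-),H(C,-)\big)$ is zero, so this restricted morphism vanishes. Since $C$ was arbitrary, every component $\phi_{(C,C')}$ is zero and hence $\phi=0$, which yields $\mathrm{Hom}_{\mathrm{Mod}(\mathcal{C}^{e})}(\mathcal{I},H)=0$.

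I do not expect a genuine obstacle here once Lemma \ref{morfiscerocc} is in hand. The one point demanding any care is the bookkeeping of variances: one must check that restricting a $\mathcal{C}^{e}$-morphism along the identity in the first (contravariant) slot genuinely produces a morphism of left $\mathcal{C}$-modules, so that the pointwise vanishing on the diagonal $C''=C$ applies and forces all components of $\phi$ to be zero.
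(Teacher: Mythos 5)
Your proposal is correct and follows essentially the same route as the paper: both parts are reduced to the $i=0$ case of Lemma \ref{morfiscerocc}, with (b) obtained by pointwise evaluation and (a) by observing that a $\mathcal{C}^{e}$-morphism $\mathcal{I}\to H$ restricts, for each fixed $C$, to a morphism $\mathcal{I}(C,-)\to H(C,-)$ in $\mathrm{Mod}(\mathcal{C})$, which must vanish. Your explicit currying along morphisms $(1_{C},\beta)$ is exactly what the paper packages as the identification $\mathrm{Mod}(\mathcal{C}^{e})\simeq \mathrm{Fun}_{K}\big(\mathcal{C}^{op},\mathrm{Mod}(\mathcal{C})\big)$.
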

\begin{proof}
$(a)$. Since $\mathrm{Mod}(\mathcal{C}^{e})\simeq \mathrm{Fun}_{K}\Big(\mathcal{C}^{op},\mathrm{Mod}(\mathcal{C})\Big)$, it is sufficient to show that  for each $C \in \mathcal{C}^{op}$ we have $\mathrm{Hom}_{\mathrm{Mod}(\mathcal{C})}\Big(\mathcal{I}\Big(C,-\Big),\,\, H\Big(C,-\Big)\Big)=0.$ This follows from Lemma \ref{morfiscerocc}, however.\\
$(b)$.  This follows from Lemma \ref{morfiscerocc}.
\end{proof}

\begin{proposition}\label{propopartida1new}
Let $\mathcal{I}$ be a strongly idempotent ideal of $\mathcal{C}$.
Thus, we have that
$\mathrm{Hom}_{\mathrm{Mod}(\mathcal{C}^{e})}(\mathcal{C},H)\simeq \mathrm{Hom}_{\mathrm{Mod}(\mathcal{C}
^{e})}(H,H)\simeq \mathrm{Hom}_{\mathrm{Mod}(\mathcal{B}^{e})}(\mathcal{B},\mathcal{B})=H^{0}(\mathcal{B})$.
\end{proposition}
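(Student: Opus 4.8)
The plan is to read off both isomorphisms from the canonical short exact sequence (\ref{succanonica}) together with the vanishing $\mathrm{Hom}_{\mathrm{Mod}(\mathcal{C}^{e})}(\mathcal{I},H)=0$ recorded in Corollary \ref{HomIH=0}(a), and then to recognize $H$ as the restriction of the identity bimodule of $\mathcal{B}$ along a categorical epimorphism.

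First I would obtain the left-hand isomorphism. Applying the left exact contravariant functor $\mathrm{Hom}_{\mathrm{Mod}(\mathcal{C}^{e})}(-,H)$ to the short exact sequence $0\to\mathcal{I}\to\mathcal{C}\xrightarrow{\Gamma(\Phi)}H\to 0$ produces an exact sequence $0\to\mathrm{Hom}_{\mathrm{Mod}(\mathcal{C}^{e})}(H,H)\to\mathrm{Hom}_{\mathrm{Mod}(\mathcal{C}^{e})}(\mathcal{C},H)\to\mathrm{Hom}_{\mathrm{Mod}(\mathcal{C}^{e})}(\mathcal{I},H)$. Since $\mathcal{I}$ is strongly idempotent, Corollary \ref{HomIH=0}(a) forces the last term to vanish, so the map induced by $\Gamma(\Phi)$ is an isomorphism and $\mathrm{Hom}_{\mathrm{Mod}(\mathcal{C}^{e})}(\mathcal{C},H)\simeq\mathrm{Hom}_{\mathrm{Mod}(\mathcal{C}^{e})}(H,H)$.

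For the right-hand isomorphism I would use the identification $H=\mathcal{B}(-,-)\circ(\Phi^{op}\otimes\Phi)=(\pi^{e})_{\ast}(\mathcal{B})$, where $\pi^{e}=\Phi^{op}\otimes\Phi:\mathcal{C}^{e}\to\mathcal{B}^{e}$ and $\mathcal{B}$ is the identity bimodule regarded as an object of $\mathrm{Mod}(\mathcal{B}^{e})$. Because $\Phi$ is bijective on objects and surjective on morphisms, so is $\pi^{e}$; concretely, $\pi^{e}$ is identified with the quotient functor $\Pi:\mathcal{C}^{e}\to\mathcal{C}^{e}/\mathcal{J}\simeq\mathcal{B}^{e}$ appearing in the proof of Proposition \ref{piehomoloepi}. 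Thus $\pi^{e}$ is an epimorphism of $K$-categories and, by the same reasoning used for $\Phi_{\ast}$ in the proof of Lemma \ref{morfiscerocc} (see \cite{Heninepi}), the restriction functor $(\pi^{e})_{\ast}:\mathrm{Mod}(\mathcal{B}^{e})\to\mathrm{Mod}(\mathcal{C}^{e})$ is full and faithful. Full faithfulness then gives $\mathrm{Hom}_{\mathrm{Mod}(\mathcal{C}^{e})}(H,H)\simeq\mathrm{Hom}_{\mathrm{Mod}(\mathcal{B}^{e})}(\mathcal{B},\mathcal{B})$, and the final identity $\mathrm{Hom}_{\mathrm{Mod}(\mathcal{B}^{e})}(\mathcal{B},\mathcal{B})=H^{0}(\mathcal{B})$ is immediate from Definition \ref{HochMit}.

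The computations involved are routine; the only point requiring care is the full faithfulness of restriction along $\pi^{e}$, which is the genuine input here and which I expect to be the main (if mild) obstacle. It reduces to checking that $\pi^{e}$ inherits from $\Phi$ the property of being a categorical epimorphism, and this is already guaranteed by the identification $\mathcal{B}^{e}\simeq\mathcal{C}^{e}/\mathcal{J}$ established in Proposition \ref{piehomoloepi}.
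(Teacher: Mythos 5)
Your proposal is correct and follows essentially the same route as the paper's own proof: both obtain $\mathrm{Hom}_{\mathrm{Mod}(\mathcal{C}^{e})}(\mathcal{C},H)\simeq \mathrm{Hom}_{\mathrm{Mod}(\mathcal{C}^{e})}(H,H)$ by applying $\mathrm{Hom}_{\mathrm{Mod}(\mathcal{C}^{e})}(-,H)$ to the sequence (\ref{succanonica}) and invoking Corollary \ref{HomIH=0}(a), and both obtain $\mathrm{Hom}_{\mathrm{Mod}(\mathcal{C}^{e})}(H,H)\simeq \mathrm{Hom}_{\mathrm{Mod}(\mathcal{B}^{e})}(\mathcal{B},\mathcal{B})$ from the full faithfulness of $(\Phi^{op}\otimes\Phi)_{\ast}$ via \cite{Heninepi}, since $\Phi^{op}\otimes\Phi$ is an epimorphism. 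Your added justification that $\pi^{e}$ is an epimorphism (via the identification $\mathcal{B}^{e}\simeq\mathcal{C}^{e}/\mathcal{J}$) is a minor elaboration of a step the paper states directly.
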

\begin{proof}
Consider the canonical epimorphism $\Phi:\mathcal{C}\longrightarrow \mathcal{C}/\mathcal{I}$.
Since $\Phi^{op}\otimes \Phi$ is an epimorphism, we have that $(\Phi^{op}\otimes\Phi)_{\ast}:\mathrm{Mod}(\mathcal{B}^{e})\longrightarrow \mathrm{Mod}(\mathcal{C}^{e})$ is a functor that is full and faithful (see for example \cite{Heninepi}). There is then an isomorphism
\begin{align*}
\mathrm{Hom}_{\mathrm{Mod}(\mathcal{B}^{e})}(\mathcal{B},\mathcal{B}) & \simeq \mathrm{Hom}_{\mathrm{Mod}(\mathcal{C}^{e})}\Big(\mathcal{B}(-,-)\circ (\Phi^{op}\otimes \Phi),\,\,\mathcal{B}(-,-)\circ (\Phi^{op}\otimes \Phi)\Big)\\& =\mathrm{Hom}_{\mathrm{Mod}(\mathcal{C}^{e})}(H,H).
\end{align*}
By Corollary \ref{HomIH=0}(a), we have that $\mathrm{Hom}_{\mathrm{Mod}(\mathcal{C}^{e})}(\mathcal{I},H)=0.$  Then, by applying the functor $\mathrm{Hom}_{\mathrm{Mod}(\mathcal{C}^{e})}(-,H)$ to the exact sequence \ref{succanonica}, we get the exact sequence
$$\xymatrix{0\ar[r] & \mathrm{Hom}_{\mathrm{Mod}(\mathcal{C}^{e})}(H,H)\ar[r] & \mathrm{Hom}_{\mathrm{Mod}(\mathcal{C}^{e})}(\mathcal{C},H)\ar[r] & \mathrm{Hom}_{\mathrm{Mod}(\mathcal{C}^{e})}(\mathcal{I},H)=0.}$$
Hence, we obtain an isomorphism $\mathrm{Hom}_{\mathrm{Mod}(\mathcal{C}^{e})}(\mathcal{C},H)\simeq \mathrm{Hom}_{\mathrm{Mod}(\mathcal{C}^{e})}(H,H)$.
\end{proof}

The following Proposition give us a criterion to decide when an ideal is strongly idempotent.

\begin{proposition}\label{proyepihomo}
Let $\mathcal{I}$ be an idempotent ideal of $\mathcal{C}$ such that $\mathcal{I}(C,-)$ is projective in $\mathrm{Mod}(\mathcal{C})$ for all $C\in \mathcal{C}$.  Then $\mathcal{I}$ is strongly idempotent.
\end{proposition}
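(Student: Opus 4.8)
The plan is to verify one of the equivalent conditions of Proposition \ref{caractidem2}; the most economical choice, given that the hypothesis concerns the \emph{left} modules $\mathcal{I}(C,-)$, is condition (b), namely that $\mathbb{EXT}^{i}_{\mathcal{C}}(\mathcal{C}/\mathcal{I},F'\circ\pi_{1})=0$ for all $1\leq i<\infty$ and all $F'\in\mathrm{Mod}(\mathcal{C}/\mathcal{I})$. By Remark \ref{descEXT}(a) this amounts to showing, for every object $C$ and every $i\geq 1$, that
$$\mathrm{Ext}^{i}_{\mathrm{Mod}(\mathcal{C})}\Big(\tfrac{\mathcal{C}(C,-)}{\mathcal{I}(C,-)},\,F'\circ\pi_{1}\Big)=0.$$
Once this is established, the implication (b)$\Rightarrow$(a) of Proposition \ref{caractidem2} gives that $\mathcal{I}$ is strongly idempotent.

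First I would exploit the canonical short exact sequence in $\mathrm{Mod}(\mathcal{C})$
$$0\longrightarrow \mathcal{I}(C,-)\longrightarrow \mathcal{C}(C,-)\longrightarrow \tfrac{\mathcal{C}(C,-)}{\mathcal{I}(C,-)}\longrightarrow 0.$$
Here $\mathcal{C}(C,-)$ is representable, hence projective in $\mathrm{Mod}(\mathcal{C})$, and $\mathcal{I}(C,-)$ is projective by hypothesis; so this sequence is a projective resolution of $\tfrac{\mathcal{C}(C,-)}{\mathcal{I}(C,-)}$ of length at most one. Consequently $\mathrm{pd}_{\mathrm{Mod}(\mathcal{C})}\tfrac{\mathcal{C}(C,-)}{\mathcal{I}(C,-)}\leq 1$, and the desired vanishing is automatic for all $i\geq 2$. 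Applying $\mathrm{Hom}_{\mathrm{Mod}(\mathcal{C})}(-,F'\circ\pi_{1})$ to the resolution and using projectivity of $\mathcal{C}(C,-)$, the only term left to control is $i=1$, where the long exact sequence yields
$$\mathrm{Ext}^{1}_{\mathrm{Mod}(\mathcal{C})}\Big(\tfrac{\mathcal{C}(C,-)}{\mathcal{I}(C,-)},\,F'\circ\pi_{1}\Big)\simeq \mathrm{coker}\Big(\mathrm{Hom}(\mathcal{C}(C,-),F'\circ\pi_{1})\longrightarrow \mathrm{Hom}(\mathcal{I}(C,-),F'\circ\pi_{1})\Big).$$

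The hard part is thus the single claim that $\mathrm{Hom}_{\mathrm{Mod}(\mathcal{C})}(\mathcal{I}(C,-),F'\circ\pi_{1})=0$, and this is precisely where idempotency of $\mathcal{I}$ enters. Let $\theta\colon\mathcal{I}(C,-)\to F'\circ\pi_{1}$ be a natural transformation. Since $\mathcal{I}$ is idempotent, $\mathcal{I}(C,D)=\sum_{Y}\mathcal{I}(Y,D)\circ\mathcal{I}(C,Y)$, so every element of $\mathcal{I}(C,D)$ is a finite $K$-linear combination of composites $g\circ f$ with $f\in\mathcal{I}(C,Y)$ and $g\in\mathcal{I}(Y,D)$. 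For such a composite, naturality of $\theta$ with respect to the morphism $g\colon Y\to D$ gives $\theta_{D}(g\circ f)=(F'\circ\pi_{1})(g)\big(\theta_{Y}(f)\big)$; but $g\in\mathcal{I}(Y,D)$ means $\pi_{1}(g)=0$ in $(\mathcal{C}/\mathcal{I})(Y,D)$, whence $(F'\circ\pi_{1})(g)=F'(0)=0$ and $\theta_{D}(g\circ f)=0$. By additivity $\theta_{D}$ vanishes on all of $\mathcal{I}(C,D)$, so $\theta=0$.

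With this claim the cokernel displayed above is zero, so $\mathrm{Ext}^{1}_{\mathrm{Mod}(\mathcal{C})}\big(\tfrac{\mathcal{C}(C,-)}{\mathcal{I}(C,-)},F'\circ\pi_{1}\big)=0$ as well. Hence $\mathbb{EXT}^{i}_{\mathcal{C}}(\mathcal{C}/\mathcal{I},F'\circ\pi_{1})=0$ for all $i\geq 1$, and Proposition \ref{caractidem2} concludes the proof. I expect the only genuine subtlety to be the careful use of naturality and idempotency in the Hom-vanishing step; the projectivity hypothesis plays the complementary and easier role of forcing $\mathrm{Ext}^{i}=0$ for $i\geq 2$, so that only the first Ext group requires the idempotency argument.
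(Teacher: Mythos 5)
Your proof is correct, and its skeleton coincides with the paper's: both verify condition (b) of Proposition \ref{caractidem2} via the pointwise description in Remark \ref{descEXT}(a), and both dispose of all degrees $i\geq 2$ by observing that $0\to\mathcal{I}(C,-)\to\mathcal{C}(C,-)\to\mathcal{C}(C,-)/\mathcal{I}(C,-)\to 0$ is a projective resolution of length at most one, thanks to the projectivity hypothesis. The genuine difference is in degree $1$: the paper simply cites \cite[Proposition 5.3]{RSS}, which asserts that $1$-idempotency of $\mathcal{I}$ already forces $\mathrm{Ext}^{1}_{\mathrm{Mod}(\mathcal{C})}\bigl(\mathcal{C}(C,-)/\mathcal{I}(C,-),F'\circ\pi_{1}\bigr)=0$, whereas you prove this vanishing from scratch: you identify $\mathrm{Ext}^{1}$ with the cokernel of restriction along $\mathcal{I}(C,-)\hookrightarrow\mathcal{C}(C,-)$ (using only projectivity of the representable functor), and then kill $\mathrm{Hom}_{\mathrm{Mod}(\mathcal{C})}(\mathcal{I}(C,-),F'\circ\pi_{1})$ by writing each element of $\mathcal{I}(C,D)$ as a finite sum of composites $g\circ f$ with $f,g\in\mathcal{I}$ and invoking naturality together with $F'(\pi_{1}(g))=F'(0)=0$. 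This makes your argument self-contained where the paper's is not, and it cleanly isolates the role of each hypothesis: idempotency kills $\mathrm{Ext}^{1}$, projectivity kills $\mathrm{Ext}^{i}$ for $i\geq 2$. It is also worth noting that your Hom-vanishing step uses only idempotency of $\mathcal{I}$, whereas the paper establishes the special case $\mathrm{Hom}_{\mathrm{Mod}(\mathcal{C})}\bigl(\mathcal{I}(C,-),H(C'',-)\bigr)=0$ separately in Lemma \ref{morfiscerocc}, under the stronger hypothesis that $\mathcal{I}$ is strongly idempotent and by a longer Yoneda-based argument; your observation would streamline that lemma as well.
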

\begin{proof}
Since $\mathcal{I}$ is idempotent (=$1$-idempotent), by \cite[Proposition 5.3]{RSS} we have that
$$0=\mathrm{Ext}^{1}_{\mathrm{Mod}(\mathcal{C}/\mathcal{I})}\Big(\mathrm{Hom}_{\mathcal{C}/\mathcal{I}}(C,-), F'\Big)=
\mathrm{Ext}^{1}_{\mathrm{Mod}(\mathcal{C})}\Big(\frac{\mathrm{Hom}_{\mathcal{C}}(C,-)}{\mathcal{I}(C,-)}, F'\circ \pi\Big)$$ for all $F'\in \mathrm{Mod}(\mathcal{C}/\mathcal{I})$ and for all $C\in \mathcal{C}$.
Consider the following exact sequence $\xymatrix{0\ar[r] & \mathcal{I}\Big(C,-\Big)\ar[r] & \mathcal{C}\Big(C,-\Big)\ar[r]^{\Psi} & H\Big(C,-\Big)\ar[r] & 0,}$
with $\mathcal{I}(C,-)$ and $\mathcal{C}(C,-)$ projective in $\mathrm{Mod}(\mathcal{C})$. Thus, the projective dimension of each $\frac{\mathrm{Hom}_{\mathcal{C}}(C,-)}{\mathcal{I}(C,-)}$ is less than or equal to $1$. Therefore, we have that $\mathrm{Ext}^{j}_{\mathrm{Mod}(\mathcal{C})}\Big(\frac{\mathrm{Hom}_{\mathcal{C}}(C,-)}{\mathcal{I}(C,-)}, F'\circ \pi\Big)=0$ for all $F'\in \mathrm{Mod}(\mathcal{C}/\mathcal{I})$, for all $C\in \mathcal{C}$ and for all $j\geq 2$. Moreover, by Proposition \ref{caractidem2}, we have that $\mathcal{I}$ is strongly idempotent.
\end{proof}

\begin{proposition}\label{prelimiextvannew}
Let $\mathcal{I}$ be an idempotent ideal such that $\mathcal{I}(C,-)$ is projective in $\mathrm{Mod}(\mathcal{C})$ for all $C\in \mathcal{C}$. Consider the canonical projection $\Phi:\mathcal{C}\longrightarrow \mathcal{C}/\mathcal{I}=\mathcal{B}$  and $H:=\mathcal{B}(-,-)\circ (\Phi^{op}\otimes \Phi)$.  Then
\begin{enumerate}
\item [(a)] $\mathrm{Ext}^{i}_{\mathrm{Mod}(\mathcal{C}^{e})}(\mathcal{I},H)=0$ for all $i>0$.

\item [(b)] $\mathrm{Ext}^{i}_{\mathrm{Mod}(\mathcal{C}^{e})}(\mathcal{C},H)\simeq \mathrm{Ext}^{i}_{\mathrm{Mod}(\mathcal{C}^{e})}(H,H)$  for all $i\geq 1$.

\end{enumerate}
\end{proposition}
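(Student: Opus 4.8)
The plan is to deduce (a) from an explicit projective resolution of $\mathcal{I}$ together with the vanishing already recorded in Lemma \ref{morfiscerocc}, and then to obtain (b) from (a) by feeding the short exact sequence \eqref{succanonica} into the long exact $\mathrm{Ext}$-sequence. As a preliminary step I would invoke Proposition \ref{proyepihomo}: since $\mathcal{I}$ is idempotent and each $\mathcal{I}(C,-)$ is projective in $\mathrm{Mod}(\mathcal{C})$, the ideal $\mathcal{I}$ is strongly idempotent, so that Lemma \ref{morfiscerocc} and Corollary \ref{HomIH=0} become available.

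For (a) the key observation is that the hypothesis ``$\mathcal{I}(C,-)$ is projective in $\mathrm{Mod}(\mathcal{C})$ for all $C$'' is exactly what Proposition \ref{lemaduda} requires. Taking $G=\mathcal{I}\in\mathrm{Mod}(\mathcal{C}^{op}\otimes_{K}\mathcal{C})$ and the standard resolution $\mathbb{S}(\mathcal{C})$ of $\mathcal{C}$, the complex $\mathcal{I}\boxtimes_{\mathcal{C}^{op}}\mathbb{S}(\mathcal{C})$ is a projective resolution of $\mathcal{I}\boxtimes_{\mathcal{C}^{op}}\mathcal{C}\simeq\mathcal{I}$ in $\mathrm{Mod}(\mathcal{C}^{e})$. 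Hence $\mathrm{Ext}^{i}_{\mathrm{Mod}(\mathcal{C}^{e})}(\mathcal{I},H)$ is the $i$-th cohomology of $\mathrm{Hom}_{\mathrm{Mod}(\mathcal{C}^{e})}\big(\mathcal{I}\boxtimes_{\mathcal{C}^{op}}\mathbb{S}(\mathcal{C}),H\big)$. I would then apply the adjunction \eqref{certainadjun} termwise (it is natural in the $\mathbb{S}(\mathcal{C})$-argument, hence yields an isomorphism of complexes) to rewrite this complex as $\mathrm{Hom}_{\mathrm{Mod}(\mathcal{C}^{e})}\big(\mathbb{S}(\mathcal{C}),\mathrm{Hom}_{\mathrm{Mod}(\mathcal{C})}(\mathcal{I},H)\big)$, where $\mathrm{Hom}_{\mathrm{Mod}(\mathcal{C})}(\mathcal{I},H)$ is precisely the two-variable functor of Corollary \ref{HomIH=0}(b), whose value at $(C,C'')$ is $\mathrm{Hom}_{\mathrm{Mod}(\mathcal{C})}(\mathcal{I}(C,-),H(C'',-))$. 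By Corollary \ref{HomIH=0}(b) this coefficient functor is identically zero, so the entire Hom-complex vanishes and therefore $\mathrm{Ext}^{i}_{\mathrm{Mod}(\mathcal{C}^{e})}(\mathcal{I},H)=0$ for every $i\geq 0$, in particular for all $i>0$.

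Given (a), I would prove (b) by applying the contravariant functor $\mathrm{Hom}_{\mathrm{Mod}(\mathcal{C}^{e})}(-,H)$ to the short exact sequence \eqref{succanonica}, $0\to\mathcal{I}\to\mathcal{C}\to H\to 0$, and reading off the long exact sequence
\[
\cdots\to\mathrm{Ext}^{i-1}_{\mathrm{Mod}(\mathcal{C}^{e})}(\mathcal{I},H)\to\mathrm{Ext}^{i}_{\mathrm{Mod}(\mathcal{C}^{e})}(H,H)\xrightarrow{\Gamma(\Phi)^{\ast}}\mathrm{Ext}^{i}_{\mathrm{Mod}(\mathcal{C}^{e})}(\mathcal{C},H)\to\mathrm{Ext}^{i}_{\mathrm{Mod}(\mathcal{C}^{e})}(\mathcal{I},H)\to\cdots.
\]
For $i\geq 2$ both flanking terms $\mathrm{Ext}^{i-1}(\mathcal{I},H)$ and $\mathrm{Ext}^{i}(\mathcal{I},H)$ vanish by (a), forcing $\Gamma(\Phi)^{\ast}$ to be an isomorphism; for $i=1$ the left flank is $\mathrm{Hom}_{\mathrm{Mod}(\mathcal{C}^{e})}(\mathcal{I},H)$, which is $0$ by Corollary \ref{HomIH=0}(a), and the right flank is $\mathrm{Ext}^{1}(\mathcal{I},H)=0$ by (a), again yielding an isomorphism. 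Thus $\mathrm{Ext}^{i}_{\mathrm{Mod}(\mathcal{C}^{e})}(\mathcal{C},H)\simeq\mathrm{Ext}^{i}_{\mathrm{Mod}(\mathcal{C}^{e})}(H,H)$ for all $i\geq 1$.

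The step I expect to require the most care is the termwise application of \eqref{certainadjun} in (a): one must verify that the variances and the order of the tensor factors in $\mathcal{C}^{e}=\mathcal{C}^{op}\otimes_{K}\mathcal{C}$ match those demanded by the formula — that $\mathcal{I}$ is contracted along the $\mathcal{C}^{op}$-variable and that the resulting coefficient functor is exactly the $\mathrm{Hom}_{\mathrm{Mod}(\mathcal{C})}(\mathcal{I},H)$ of Corollary \ref{HomIH=0}(b) — and that the adjunction is compatible with the differentials of $\mathbb{S}(\mathcal{C})$ so that it descends to cohomology. Once this bookkeeping is settled, both (a) and (b) are essentially formal consequences of the vanishing results and the long exact sequence.
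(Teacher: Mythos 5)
Your proposal is correct and follows essentially the same route as the paper's proof: Proposition \ref{proyepihomo} to get strong idempotency, Proposition \ref{lemaduda} to obtain the projective resolution $\mathcal{I}\boxtimes_{\mathcal{C}^{op}}\mathbb{S}(\mathcal{C})$, the adjunction \eqref{certainadjun} plus Corollary \ref{HomIH=0}(b) to kill the Hom-complex for (a), and the long exact sequence from \eqref{succanonica} together with Corollary \ref{HomIH=0}(a) for (b). The bookkeeping step you flag (contracting $\mathcal{I}$ along the $\mathcal{C}^{op}$-variable so the coefficient functor is exactly $\mathrm{Hom}_{\mathrm{Mod}(\mathcal{C})}(\mathcal{I},H)$) is precisely what the paper asserts as its isomorphism of complexes, so no gap remains.
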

\begin{proof}
$(a)$. Consider the standard projective resolution  $\Big(\mathbb{S}(\mathcal{C}),d\Big)$ of $\mathcal{C}$.  By the isomorphism given in Equation \ref{certainadjun}, we have an isomorphism of complexes
$$\mathrm{Hom}_{\mathrm{Mod}(\mathcal{C}^{op}\otimes_{K}\mathcal{C})}\Big(\mathcal{I}\boxtimes_{\mathcal{C}^{op}}\mathbb{S}(\mathcal{C}),H\Big)\simeq  \mathrm{Hom}_{\mathrm{Mod}(\mathcal{C}^{op}\otimes_{K} \mathcal{C})}\Big(\mathbb{S}(\mathcal{C}), \mathrm{Hom}_{\mathrm{Mod}(\mathcal{C})}(\mathcal{I},H)\Big).$$
By Proposition \ref{lemaduda} we have that $\mathcal{I}\boxtimes_{\mathcal{C}^{op}}\mathbb{S}(\mathcal{C})$ is a projective resolution of $\mathcal{I}$ in $\mathrm{Mod}(\mathcal{C}^{e})$. By Proposition \ref{proyepihomo} we get that $\mathcal{I}$ is strongly idempotent, and hence by Corollary \ref{HomIH=0}(b), we obtain that $\mathrm{Hom}_{\mathrm{Mod}(\mathcal{C})}(\mathcal{I},H)=0$. By taking homology to the isomorphism of complexes above we obtain 
\begin{align*}
\mathrm{Ext}^{i}_{\mathrm{Mod}(\mathcal{C}^{e})}(\mathcal{I},H) & =H^{i}\Big(\mathrm{Hom}_{\mathrm{Mod}(\mathcal{C}^{op}\otimes_{K}\mathcal{C})}\Big(\mathcal{I}\boxtimes_{\mathcal{C}^{op}}\mathbb{S}(\mathcal{C}),H\Big)\Big)\\
& \simeq H^{i}\Big(\mathrm{Hom}_{\mathrm{Mod}(\mathcal{C}^{op}\otimes_{K} \mathcal{C})}\Big(\mathbb{S}(\mathcal{C}), \mathrm{Hom}_{\mathrm{Mod}(\mathcal{C})}(\mathcal{I},H)\Big)\Big)=0.
\end{align*}
$(b)$. By Corollary \ref{HomIH=0} we have that  $\mathrm{Hom}_{\mathrm{Mod}(\mathcal{C}^{e})}(\mathcal{I},H)=0$, and by item $(a)$ we have that $\mathrm{Ext}^{i}_{\mathrm{Mod}(\mathcal{C}^{e})}(\mathcal{I},H)=0$
for all $i>0$. Then, by the long exact sequence  obtained by applying the functor $\mathrm{Hom}_{\mathrm{Mod}(\mathcal{C}^{e})}(-,H)$ to the exact sequence given in Equation \ref{succanonica}, we get $\mathrm{Ext}^{i}_{\mathrm{Mod}(\mathcal{C}^{e})}(\mathcal{C},H)\simeq \mathrm{Ext}^{i}_{\mathrm{Mod}(\mathcal{C}^{e})}(H,H)$ for all $i\geq 1$.
\end{proof}

\begin{proposition} \label{otroisoimportantenew}
Let $\mathcal{I}$ be a strongly idempotent ideal of $\mathcal{C}$. Then 
$\mathrm{Ext}_{\mathrm{Mod}(\mathcal{C}^{e})}^{i}(H,H)$ $\simeq \mathrm{Ext}_{\mathrm{Mod}(\mathcal{B}^{e})}^{i}(\mathcal{B},\mathcal{B})=H^{i}(\mathcal{B})$ for all $i\geq 1$.
\end{proposition}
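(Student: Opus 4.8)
The plan is to reduce the statement to the already-established fact (Proposition~\ref{piehomoloepi}) that $\pi^{e}$ is a homological epimorphism, and then simply to unwind the definition of strongly idempotent ideal for $\pi^{e}$ at the single pair of bimodules $(\mathcal{B},\mathcal{B})$. The analytic content has already been absorbed into Proposition~\ref{piehomoloepi}, so what remains is essentially a translation/bookkeeping argument.

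First I would observe that, since $\mathcal{I}$ is strongly idempotent, the canonical projection $\Phi=\pi_{1}\colon\mathcal{C}\to\mathcal{C}/\mathcal{I}=\mathcal{B}$ is by definition a homological epimorphism. Applying Proposition~\ref{piehomoloepi} to $\Phi$, I obtain that $\pi^{e}=\Phi^{op}\otimes\Phi\colon\mathcal{C}^{e}\to\mathcal{B}^{e}$ is a homological epimorphism. Equivalently, writing $\mathcal{J}:=\mathrm{Ker}(\pi^{e})$ and identifying $\pi^{e}$ with the canonical projection $\Pi\colon\mathcal{C}^{e}\to\mathcal{C}^{e}/\mathcal{J}\simeq\mathcal{B}^{e}$ exactly as in the proof of Proposition~\ref{piehomoloepi}, the ideal $\mathcal{J}$ is strongly idempotent.

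Next I would invoke the defining property of strong idempotency (Definition~\ref{kidemcat}) for the homological epimorphism $\Pi\colon\mathcal{C}^{e}\to\mathcal{C}^{e}/\mathcal{J}\simeq\mathcal{B}^{e}$: for all $F,F'\in\mathrm{Mod}(\mathcal{B}^{e})$ and all $i\geq 0$ the comparison map
$$\varphi^{i}_{F,(\Pi)_{\ast}(F')}\colon\mathrm{Ext}^{i}_{\mathrm{Mod}(\mathcal{B}^{e})}(F,F')\longrightarrow\mathrm{Ext}^{i}_{\mathrm{Mod}(\mathcal{C}^{e})}\big((\Pi)_{\ast}(F),(\Pi)_{\ast}(F')\big)$$
is an isomorphism, where $(\Pi)_{\ast}=(\pi^{e})_{\ast}\colon\mathrm{Mod}(\mathcal{B}^{e})\to\mathrm{Mod}(\mathcal{C}^{e})$ is the restriction (pullback) functor along $\pi^{e}$. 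The one identification to record is that the restriction of the diagonal bimodule $\mathcal{B}\in\mathrm{Mod}(\mathcal{B}^{e})$ along $\pi^{e}$ is precisely $H$; indeed $(\pi^{e})_{\ast}(\mathcal{B})=\mathcal{B}(-,-)\circ(\Phi^{op}\otimes\Phi)=H$ by the very definition of $H$.

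Specializing the displayed isomorphism to $F=F'=\mathcal{B}$ then yields
$$\mathrm{Ext}^{i}_{\mathrm{Mod}(\mathcal{B}^{e})}(\mathcal{B},\mathcal{B})\simeq\mathrm{Ext}^{i}_{\mathrm{Mod}(\mathcal{C}^{e})}(H,H)$$
for all $i\geq 0$, which gives the desired conclusion (in particular for every $i\geq 1$). I expect no serious obstacle here; the only points requiring care are the bookkeeping identification $(\pi^{e})_{\ast}(\mathcal{B})=H$ and the observation that $\mathcal{C}^{e}/\mathcal{J}$ is canonically isomorphic to $\mathcal{B}^{e}$, so that the diagonal bimodule $\mathcal{B}$ genuinely lives in $\mathrm{Mod}(\mathcal{B}^{e})$ and Definition~\ref{kidemcat} applies verbatim to $\Pi$.
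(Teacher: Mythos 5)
Your proposal is correct and follows essentially the same route as the paper: both deduce from Proposition~\ref{piehomoloepi} that $\Phi^{op}\otimes\Phi$ is a homological epimorphism, then apply the Ext-isomorphism defining strong idempotency (the paper cites it via Proposition~\ref{caractidem2}, you cite Definition~\ref{kidemcat} directly, which is the same content) and specialize to $F=F'=\mathcal{B}$, using the identification $(\Phi^{op}\otimes\Phi)_{\ast}(\mathcal{B})=H$. No gaps; your version is, if anything, slightly more explicit about the bookkeeping identifications.
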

\begin{proof}
By Proposition \ref{piehomoloepi}, we conclude that $\Phi^{op}\otimes\Phi:\mathcal{C}^{e}\longrightarrow \mathcal{B}^{e}$ is a homological epimorphism (that is, $\mathrm{Ker}(\Phi^{op}\otimes\Phi)$ is a strongly idempotent ideal of $\mathcal{C}^{e}$). 
We have the functor $(\Phi^{op}\otimes\Phi)_{\ast}:\mathrm{Mod}(\mathcal{B}^{e})\longrightarrow \mathrm{Mod}(\mathcal{C}^{e}).$ Hence by  Corollary \ref{caractidem2}, we have that
$$\varphi^{i}_{F,(\Phi^{op}\otimes\Phi)_{\ast}(F')}:\mathrm{Ext}^{i}_{\mathrm{Mod}(\mathcal{B}^{e})}(F,F')\longrightarrow \mathrm{Ext}^{i}_{\mathrm{Mod}(\mathcal{C}^{e})}((\Phi^{op}\otimes\Phi)_{\ast}(F),(\Phi^{op}\otimes\Phi)_{\ast}(F'))$$ is an isomorphism for all $F,F'\in \mathrm{Mod}(\mathcal{B}^{e})$ and for all  $0\leq i<\infty$. In particular for
$F=F'=\mathcal{B}$ we have an isomorphism
$$\mathrm{Ext}^{i}_{\mathrm{Mod}(\mathcal{B}^{e})}(\mathcal{B},\mathcal{B})\simeq \mathrm{Ext}^{i}_{\mathrm{Mod}(\mathcal{C}^{e})}((\Phi^{op}\otimes\Phi)_{\ast}(\mathcal{B}),(\Phi^{op}\otimes\Phi)_{\ast}(\mathcal{B}))=\mathrm{Ext}^{i}_{\mathrm{Mod}(\mathcal{C}^{e})}(H,H).$$
\end{proof}

The following is a generalization of the first exact sequence obtained in \cite[Theorem 3.4 (1)]{KoenigNagase}.

\begin{theorem}\label{episuclarga}
Let $\mathcal{I}$ be an idempotent ideal such that $\mathcal{I}(C,-)$ is projective in $\mathrm{Mod}(\mathcal{C})$ for all $C\in \mathcal{C}$. Then $\Phi:\mathcal{C}\longrightarrow \mathcal{B}=\mathcal{C}/\mathcal{I}$ a homological epimorphism, and there is a long exact sequence that relates the Hochschild-Mitchell cohomology of $\mathcal{C}$ to the Hochschild-Mitchell cohomology of $\mathcal{B}=\mathcal{C}/\mathcal{I}$ 
$$\xymatrix{0\ar[r] & \mathrm{Hom}_{\mathrm{Mod}(\mathcal{C}^{e})}(\mathcal{C},\mathcal{I})\ar[r] &   H^{0}(\mathcal{C})\ar[r] &  H^{0}(\mathcal{B})\ar `[ld] `[] `[llld] |{\delta} `[] [llldr] & \\
&  \mathrm{Ext}_{\mathrm{Mod}(\mathcal{C}^{e})}^{1}(\mathcal{C},\mathcal{I})\ar[r] &   H^{1}(\mathcal{C})\ar[r] &   H^{1}(\mathcal{B})\ar `[ld] `[] `[llld] |{\delta} `[] [llldr] &\\
&  \mathrm{Ext}_{\mathrm{Mod}(\mathcal{C}^{e})}^{2}(\mathcal{C},\mathcal{I})\ar[r] &   H^{2}(\mathcal{C})\ar[r] &   H^{2}(\mathcal{B})\ar[r] & \cdots}$$
\end{theorem}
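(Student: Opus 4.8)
The plan is to obtain the long exact sequence directly from the canonical short exact sequence (\ref{succanonica}) in $\mathrm{Mod}(\mathcal{C}^e)$,
$$\xymatrix{0\ar[r] & \mathcal{I}\ar[r] & \mathcal{C}\ar[r]^{\Gamma(\Phi)} & H\ar[r] & 0,}$$
by applying the left-exact functor $\mathrm{Hom}_{\mathrm{Mod}(\mathcal{C}^e)}(\mathcal{C},-)$ and then rewriting each resulting term in terms of Hochschild-Mitchell cohomology via the isomorphisms already assembled in Propositions \ref{propopartida1new}, \ref{prelimiextvannew} and \ref{otroisoimportantenew}. Essentially all the hard work has been done in those preceding results, so the theorem is an assembly rather than a new computation.

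First I would record that $\Phi$ is a homological epimorphism: since $\mathcal{I}$ is idempotent and each $\mathcal{I}(C,-)$ is projective in $\mathrm{Mod}(\mathcal{C})$, Proposition \ref{proyepihomo} gives that $\mathcal{I}$ is strongly idempotent, which is exactly the definition of $\Phi:\mathcal{C}\to\mathcal{C}/\mathcal{I}$ being a homological epimorphism; this also licenses the use of all the auxiliary propositions. Next, applying $\mathrm{Hom}_{\mathrm{Mod}(\mathcal{C}^e)}(\mathcal{C},-)$ to (\ref{succanonica}) and using that $\mathrm{Ext}^\bullet_{\mathrm{Mod}(\mathcal{C}^e)}(\mathcal{C},-)$ is a cohomological $\delta$-functor yields the long exact sequence
\begin{align*}
0 & \to \mathrm{Hom}_{\mathrm{Mod}(\mathcal{C}^e)}(\mathcal{C},\mathcal{I})\to \mathrm{Hom}_{\mathrm{Mod}(\mathcal{C}^e)}(\mathcal{C},\mathcal{C})\to \mathrm{Hom}_{\mathrm{Mod}(\mathcal{C}^e)}(\mathcal{C},H)\\
& \to \mathrm{Ext}^1_{\mathrm{Mod}(\mathcal{C}^e)}(\mathcal{C},\mathcal{I})\to \mathrm{Ext}^1_{\mathrm{Mod}(\mathcal{C}^e)}(\mathcal{C},\mathcal{C})\to \mathrm{Ext}^1_{\mathrm{Mod}(\mathcal{C}^e)}(\mathcal{C},H)\to \cdots.
\end{align*}

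It then remains to translate the terms. By Definition \ref{HochMit} the terms $\mathrm{Ext}^i_{\mathrm{Mod}(\mathcal{C}^e)}(\mathcal{C},\mathcal{C})$ are precisely $H^i(\mathcal{C})$, and the leading term $\mathrm{Hom}_{\mathrm{Mod}(\mathcal{C}^e)}(\mathcal{C},\mathcal{I})$ already matches the statement. For the $H$-terms, in degree $0$ Proposition \ref{propopartida1new} gives $\mathrm{Hom}_{\mathrm{Mod}(\mathcal{C}^e)}(\mathcal{C},H)\simeq H^0(\mathcal{B})$, while for $i\geq 1$ I would compose the isomorphism $\mathrm{Ext}^i_{\mathrm{Mod}(\mathcal{C}^e)}(\mathcal{C},H)\simeq \mathrm{Ext}^i_{\mathrm{Mod}(\mathcal{C}^e)}(H,H)$ of Proposition \ref{prelimiextvannew}(b) with the isomorphism $\mathrm{Ext}^i_{\mathrm{Mod}(\mathcal{C}^e)}(H,H)\simeq H^i(\mathcal{B})$ of Proposition \ref{otroisoimportantenew}, obtaining $\mathrm{Ext}^i_{\mathrm{Mod}(\mathcal{C}^e)}(\mathcal{C},H)\simeq H^i(\mathcal{B})$. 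Substituting all of these into the displayed sequence produces exactly the asserted long exact sequence, with the connecting morphism $\delta$ defined as the composite of the original connecting map $\mathrm{Ext}^i_{\mathrm{Mod}(\mathcal{C}^e)}(\mathcal{C},H)\to \mathrm{Ext}^{i+1}_{\mathrm{Mod}(\mathcal{C}^e)}(\mathcal{C},\mathcal{I})$ with the inverse of the identification $H^i(\mathcal{B})\simeq \mathrm{Ext}^i_{\mathrm{Mod}(\mathcal{C}^e)}(\mathcal{C},H)$.

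The only genuine subtlety, and hence the step I would treat most carefully, is checking that these term-by-term identifications are compatible with the maps of the long exact sequence, so that after substitution the sequence stays exact and the maps $H^i(\mathcal{C})\to H^i(\mathcal{B})$ are the natural ones. Since in positive degrees the identification factors through $\mathrm{Ext}^i(H,H)$, the cleanest route is to observe that the map $\mathrm{Ext}^i(\mathcal{C},\mathcal{C})\to \mathrm{Ext}^i(\mathcal{C},H)$ induced by $\Gamma(\Phi)$ corresponds, under these isomorphisms, to the restriction map $H^i(\mathcal{C})\to H^i(\mathcal{B})$ attached to the homological epimorphism $\Phi^{op}\otimes\Phi$ of Proposition \ref{piehomoloepi}. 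Once this naturality is in place the remaining argument is a purely formal substitution into an exact sequence; no further computation is required, as every vanishing and every isomorphism involved has already been established.
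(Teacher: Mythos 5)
Your proposal is correct and follows essentially the same route as the paper's own proof: establish that $\mathcal{I}$ is strongly idempotent via Proposition \ref{proyepihomo}, apply $\mathrm{Hom}_{\mathrm{Mod}(\mathcal{C}^{e})}(\mathcal{C},-)$ to the sequence (\ref{succanonica}), and identify the terms using Propositions \ref{propopartida1new}, \ref{prelimiextvannew} and \ref{otroisoimportantenew}. Your added attention to the compatibility of the identifications with the maps in the sequence is a reasonable refinement, but it goes beyond what the paper itself verifies, since the statement only asserts the existence of a long exact sequence with those terms.
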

\begin{proof}
By Proposition \ref{proyepihomo}, we know that $\Phi:\mathcal{C}\longrightarrow \mathcal{C}/\mathcal{I}$ is a homological epimorphism. Let us consider the exact sequence
$$\xymatrix{0\ar[r] & \mathcal{I}\ar[r] & \mathcal{C}\ar[rr]^(.5){\Gamma(\Phi)} &  & H\ar[r] & 0}$$
By applying the functor $\mathrm{Hom}_{\mathrm{Mod}(\mathcal{C}^{e})}(\mathcal{C},-)$ to the last exact sequence, we have that
$$\xymatrix{0\ar[r] & \mathrm{Hom}_{\mathrm{Mod}(\mathcal{C}^{e})}(\mathcal{C},\mathcal{I})\ar[r] &   \mathrm{Hom}_{\mathrm{Mod}(\mathcal{C}^{e})}(\mathcal{C},\mathcal{C})\ar[r] &  \mathrm{Hom}_{\mathrm{Mod}(\mathcal{C}^{e})}(\mathcal{C},H) \ar `[ld] `[] `[llld] |{\delta} `[] [llldr] & \\
&  \mathrm{Ext}_{\mathrm{Mod}(\mathcal{C}^{e})}^{1}(\mathcal{C},\mathcal{I})\ar[r] &    \mathrm{Ext}_{\mathrm{Mod}(\mathcal{C}^{e})}^{1}(\mathcal{C},\mathcal{C})\ar[r] &   \mathrm{Ext}_{\mathrm{Mod}(\mathcal{C}^{e})}^{1}(\mathcal{C},H)\ar `[ld] `[] `[llld] |{\delta} `[] [llldr] &\\
&  \mathrm{Ext}_{\mathrm{Mod}(\mathcal{C}^{e})}^{2}(\mathcal{C},\mathcal{I})\ar[r] &    \mathrm{Ext}_{\mathrm{Mod}(\mathcal{C}^{e})}^{2}(\mathcal{C},\mathcal{C})\ar[r] &   \mathrm{Ext}_{\mathrm{Mod}(\mathcal{C}^{e})}^{2}(\mathcal{C},H) \cdots}$$

By Proposition \ref{propopartida1new}, we have that  $\mathrm{Hom}_{\mathrm{Mod}(\mathcal{C}^{e})}(\mathcal{C},H)\simeq \mathrm{Hom}_{\mathrm{Mod}(\mathcal{C}^{e})}(H,H)\simeq H^{0}(\mathcal{B})$, and by Proposition \ref{prelimiextvannew} and \ref{otroisoimportantenew} we get the isomorphisms  $\mathrm{Ext}^{i}_{\mathrm{Mod}(\mathcal{C}^{e})}(\mathcal{C},H)\simeq \mathrm{Ext}^{i}_{\mathrm{Mod}(\mathcal{C}^{e})}(H,H)\simeq H^{i}(\mathcal{B})$ for all $i\geq 1$, which proves the theorem.

\end{proof}

\section{Applications}
We consider the triangular matrix category  $\Lambda:=\left[\begin{smallmatrix}
\mathcal{T} & 0 \\ 
M & \mathcal{U}
\end{smallmatrix}\right]$ constructed in \cite{LeOS} and defined as follows.
\begin{definition}$\textnormal{\cite[Definition 3.5]{LeOS}}$ \label{defitrinagularmat}
Let $\mathcal{U}$ and $\mathcal{T}$ be two $K$-categories, and consider an additive $K$-functor  $M$ from the tensor product category  $\mathcal{U}\otimes_{K} \mathcal{T}^{op}$ to the category $\mathrm{Mod}(K)$.
The \textbf{triangular matrix category}
$\Lambda=\left[ \begin{smallmatrix}
\mathcal{T} & 0 \\ M & \mathcal{U}
\end{smallmatrix}\right]$ is defined as below.
\begin{enumerate}
\item [(a)] The class of objects of this category are matrices $ \left[
\begin{smallmatrix}
T & 0 \\ M & U
\end{smallmatrix}\right]  $ with $ T\in \mathrm{obj}(\mathcal{T}) $ and $ U\in \mathrm{obj}(\mathcal{U}) $.

\item [(b)] For objects in
$\left[ \begin{smallmatrix}
T & 0 \\
M & U
\end{smallmatrix} \right] ,  \left[ \begin{smallmatrix}
T' & 0 \\
M & U'
\end{smallmatrix} \right]$ in
$\Lambda$, we define $\mathrm{ Hom}_{\Lambda}\left (\left[ \begin{smallmatrix}
T & 0 \\
M & U
\end{smallmatrix} \right] ,  \left[ \begin{smallmatrix}
T' & 0 \\
M & U'
\end{smallmatrix} \right]  \right)  := \left[ \begin{smallmatrix}
\mathrm{Hom}_{\mathcal{T}}(T,T') & 0 \\
M(U',T) & \mathrm{Hom}_{\mathcal{U}}(U,U')
\end{smallmatrix} \right].$
\end{enumerate}
The composition is given by
\begin{eqnarray*}
\circ&:&\left[  \begin{smallmatrix}
{\mathcal{T}}(T',T'') & 0 \\
M(U'',T') & {\mathcal{U}}(U',U'')
\end{smallmatrix}  \right] \times \left[
\begin{smallmatrix}
{\mathcal{T}}(T,T') & 0 \\
M(U',T) & {\mathcal{U}}(U,U')
\end{smallmatrix} \right]\longrightarrow\left[
\begin{smallmatrix}
{\mathcal{T}}(T,T'') & 0 \\
M(U'',T) & {\mathcal{U}}(U,U'')\end{smallmatrix} \right] \\
&& \left( \left[ \begin{smallmatrix}
t_{2} & 0 \\
m_{2} & u_{2}
\end{smallmatrix} \right], \left[
\begin{smallmatrix}
t_{1} & 0 \\
m_{1} & u_{1}
\end{smallmatrix} \right]\right)\longmapsto\left[
\begin{smallmatrix}
t_{2}\circ t_{1} & 0 \\
m_{2}\bullet t_{1}+u_{2}\bullet m_{1} & u_{2}\circ u_{1}
\end{smallmatrix} \right].
\end{eqnarray*}
\end{definition}
We recall that $ m_{2}\bullet t_{1}:=M(1_{U''}\otimes t_{1}^{op})(m_{2})$ and
$u_{2}\bullet m_{1}=M(u_{2}\otimes 1_{T})(m_{1})$.
Thus, $\Lambda$ is clearly a $K$-category since
$\mathcal{T} $ and $\mathcal{U}$ are $K$-categories and $M(U',T)$ is a $K$-module.\\
We define a functor $\Phi:\Lambda\longrightarrow \mathcal{U}$
as follows: $\Phi\Big(\left[\begin{smallmatrix}
 T & 0 \\ 
M & U
\end{smallmatrix}\right]\Big):=U$ and for 
 $\left[\begin{smallmatrix}
\alpha & 0 \\ 
m & \beta
\end{smallmatrix}\right]:\left[\begin{smallmatrix}
 T & 0 \\ 
M & U
\end{smallmatrix}\right]\longrightarrow \left[\begin{smallmatrix}
T' & 0 \\ 
M & U'
\end{smallmatrix}\right]$ we set $\Phi\Big(\left[\begin{smallmatrix}
\alpha & 0 \\ 
m & \beta
\end{smallmatrix}\right]\Big)=\beta$.\\
For simplicity, we will write
$\mathfrak{M}=\left[\begin{smallmatrix}
 T & 0 \\ 
M & U
\end{smallmatrix}\right]\in \Lambda$.

\begin{lemma}\label{exacseuqnimpor}
 There exists an exact sequence
in $\mathrm{Mod}(\Lambda^{e})$
$$\xymatrix{0\ar[r] & \mathcal{I}\ar[r] & \Lambda\ar[rr]^(.3){\Gamma(\Phi)} & & \mathcal{U}(-,-)\circ (\Phi_{2}\otimes\Phi_{1})\ar[r] & 0,}$$
where for objects $\mathfrak{M}'=\left[\begin{smallmatrix}
 T' & 0 \\ 
M & U'
\end{smallmatrix}\right]$ and $\mathfrak{M}=\left[\begin{smallmatrix}
 T & 0 \\ 
M & U
\end{smallmatrix}\right]$ in $\Lambda$ the ideal $\mathcal{I}$ is given as  $\mathcal{I}\big(\mathfrak{M}',\mathfrak{M}\big)=\mathrm{Ker}\left([\Gamma (\Phi)]_{\big(\mathfrak{M}',\mathfrak{M}\big)}\right)=\left[\begin{smallmatrix}
 \mathcal{T}(T',T) & 0 \\ 
M(U,T') &  0
\end{smallmatrix}\right]$.
\end{lemma}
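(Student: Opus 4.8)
The plan is to form the morphism $\Gamma(\Phi)$ of Section 4 for the $K$-functor $\Phi\colon\Lambda\to\mathcal{U}$ defined above — its definition uses only that $\Phi$ is a functor, so it applies verbatim here with $\mathcal{B}=\mathcal{U}$, $\Phi_{1}:=\Phi$ and $\Phi_{2}:=\Phi^{op}$ — and then to verify directly that it is an epimorphism in $\mathrm{Mod}(\Lambda^{e})$ whose kernel is the stated ideal $\mathcal{I}$. Writing $H:=\mathcal{U}(-,-)\circ(\Phi_{2}\otimes\Phi_{1})$, the content is the componentwise identification of $\Gamma(\Phi)$ and of $\mathrm{Ker}(\Gamma(\Phi))$.

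First I would record that $\Phi$ is a genuine $K$-functor: it is $K$-linear on each Hom-space, it preserves identities (the identity of $\mathfrak{M}$ is $\left[\begin{smallmatrix}1_{T} & 0\\ 0 & 1_{U}\end{smallmatrix}\right]$, sent to $1_{U}$), and the composition law of Definition \ref{defitrinagularmat} gives $\Phi\big(\left[\begin{smallmatrix} t_{2} & 0\\ m_{2} & u_{2}\end{smallmatrix}\right]\circ\left[\begin{smallmatrix} t_{1} & 0\\ m_{1} & u_{1}\end{smallmatrix}\right]\big)=u_{2}\circ u_{1}=\Phi\big(\left[\begin{smallmatrix} t_{2} & 0\\ m_{2} & u_{2}\end{smallmatrix}\right]\big)\circ\Phi\big(\left[\begin{smallmatrix} t_{1} & 0\\ m_{1} & u_{1}\end{smallmatrix}\right]\big)$. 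Hence $\Phi_{2}\otimes\Phi_{1}\colon\Lambda^{e}\to\mathcal{U}^{e}$ is a $K$-functor, $H\in\mathrm{Mod}(\Lambda^{e})$, and $\Gamma(\Phi)\colon\Lambda(-,-)\to H$ is a well-defined morphism in $\mathrm{Mod}(\Lambda^{e})$.

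Next I would evaluate $\Gamma(\Phi)$ at a pair of objects $(\mathfrak{M}',\mathfrak{M})$. By definition $[\Gamma(\Phi)]_{(\mathfrak{M}',\mathfrak{M})}$ sends a morphism to its image under $\Phi$, so by the description of $\mathrm{Hom}_{\Lambda}$ in Definition \ref{defitrinagularmat} it is the projection $\left[\begin{smallmatrix}\alpha & 0\\ m & \beta\end{smallmatrix}\right]\mapsto\beta$ from $\left[\begin{smallmatrix}\mathcal{T}(T',T) & 0\\ M(U,T') & \mathcal{U}(U',U)\end{smallmatrix}\right]$ onto $\mathcal{U}(U',U)=H(\mathfrak{M}',\mathfrak{M})$. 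Its kernel is exactly the set of morphisms whose $\mathcal{U}$-entry vanishes, that is $\left[\begin{smallmatrix}\mathcal{T}(T',T) & 0\\ M(U,T') & 0\end{smallmatrix}\right]$, which is the asserted value of $\mathcal{I}(\mathfrak{M}',\mathfrak{M})$. Being the kernel of a morphism in $\mathrm{Mod}(\Lambda^{e})$, $\mathcal{I}$ is automatically a subfunctor of $\Lambda(-,-)$ natural in both variables; and since $\Phi(gf)=\Phi(g)\Phi(f)$ and $\Phi(fh)=\Phi(f)\Phi(h)$ it is stable under pre- and post-composition, hence a two-sided ideal.

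Finally I would check surjectivity of each component: for $\beta\in\mathcal{U}(U',U)$ the matrix $\left[\begin{smallmatrix}0 & 0\\ 0 & \beta\end{smallmatrix}\right]$ is a morphism $\mathfrak{M}'\to\mathfrak{M}$ (its upper-left and lower-left entries are the zero elements of the $K$-modules $\mathcal{T}(T',T)$ and $M(U,T')$) and satisfies $\Phi\big(\left[\begin{smallmatrix}0 & 0\\ 0 & \beta\end{smallmatrix}\right]\big)=\beta$. Thus $[\Gamma(\Phi)]_{(\mathfrak{M}',\mathfrak{M})}$ is onto for every pair of objects, so $\Gamma(\Phi)$ is an epimorphism with kernel $\mathcal{I}$, and the displayed short exact sequence follows. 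The argument is essentially bookkeeping; the only step needing attention is confirming that these componentwise kernels are natural in both variables so that they assemble into the single functor $\mathcal{I}\in\mathrm{Mod}(\Lambda^{e})$, which is immediate from the naturality of $\Gamma(\Phi)$.
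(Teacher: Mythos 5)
Your proposal is correct, and it is exactly the direct componentwise verification the paper has in mind: the paper's own proof of this lemma is simply the word ``Straightforward,'' and your argument (checking $\Phi$ is a $K$-functor, identifying each component of $\Gamma(\Phi)$ as the projection onto $\mathcal{U}(U',U)$, reading off the kernel, and noting surjectivity and the ideal property) supplies precisely the routine details being omitted. No gaps.
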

\begin{proof}
It is straightforward.
\end{proof}



\begin{remark}\label{projectivida}
We can see that $\mathcal{I}\Big({\left[\begin{smallmatrix}
 T & 0 \\ 
M & U
\end{smallmatrix}\right]},-\Big)\simeq {\Lambda}\Big({\left[\begin{smallmatrix}
 T & 0 \\ 
M & 0
\end{smallmatrix}\right]},-\Big)$, and, from this, it follows that $\mathcal{I}\Big({\left[\begin{smallmatrix}
 T & 0 \\ 
M & U
\end{smallmatrix}\right]},-\Big)$ is projective in $\mathrm{Mod}(\Lambda)$.
\end{remark}

The following extends the well-known result discovered independently by Cibils and Michelena-Platzeck; see  \cite{Cibils} and \cite{MichelanaPlat}.

\begin{theorem}\label{Michelena}(Cibils-Michelena-Platzeck's long exact sequence)
Let $\Lambda=\left[ \begin{smallmatrix}
\mathcal{T} & 0 \\ M & \mathcal{U}
\end{smallmatrix}\right]$ be a triangular matrix category. Then there is a long exact sequence that relates the Hochschild-Mitchell cohomology of $\Lambda$ to the Hochschild-Mitchell cohomology of $\mathcal{U}$: 
$$\xymatrix{0\ar[r] & \mathrm{Hom}_{\mathrm{Mod}(\Lambda^{e})}(\Lambda,\mathcal{I})\ar[r] &   H^{0}(\Lambda)\ar[r] &  H^{0}(\mathcal{U})\ar `[ld] `[] `[llld] |{\delta} `[] [llldr] & \\
&  \mathrm{Ext}_{\mathrm{Mod}(\Lambda^{e})}^{1}(\Lambda,\mathcal{I})\ar[r] &   H^{1}(\Lambda)\ar[r] &   H^{1}(\mathcal{U})\ar `[ld] `[] `[llld] |{\delta} `[] [llldr] &\\
&  \mathrm{Ext}_{\mathrm{Mod}(\Lambda^{e})}^{2}(\Lambda,\mathcal{I})\ar[r] &   H^{2}(\Lambda)\ar[r] &   H^{2}(\mathcal{U})\ar[r] & \cdots}$$
\end{theorem}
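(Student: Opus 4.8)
The plan is to verify that the triangular matrix category $\Lambda$ together with the ideal $\mathcal{I}$ described in Lemma \ref{exacseuqnimpor} satisfies exactly the hypotheses of the main Theorem \ref{episuclarga}, and then simply invoke that theorem with $\mathcal{C}=\Lambda$ and $\mathcal{B}=\mathcal{C}/\mathcal{I}=\mathcal{U}$. Concretely, Theorem \ref{episuclarga} requires two things: that $\mathcal{I}$ is an idempotent ideal, and that $\mathcal{I}(C,-)$ is projective in $\mathrm{Mod}(\Lambda)$ for every object $C\in\Lambda$. The projectivity is precisely the content of Remark \ref{projectivida}, which identifies $\mathcal{I}(\mathfrak{M},-)$ with the representable functor $\Lambda(\left[\begin{smallmatrix} T & 0 \\ M & 0\end{smallmatrix}\right],-)$, and representable functors are projective in $\mathrm{Mod}(\Lambda)$ by Yoneda. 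So that hypothesis comes for free.

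The remaining point to check is that $\mathcal{I}$ is idempotent, i.e. $\mathcal{I}^{2}=\mathcal{I}$ (more precisely that the induced map $\mathcal{I}\boxtimes_{\Lambda}\mathcal{I}\to\mathcal{I}$, or the composition of morphisms through $\mathcal{I}$, recovers all of $\mathcal{I}$). First I would write out, using the explicit description $\mathcal{I}(\mathfrak{M}',\mathfrak{M})=\left[\begin{smallmatrix}\mathcal{T}(T',T) & 0 \\ M(U,T') & 0\end{smallmatrix}\right]$ from Lemma \ref{exacseuqnimpor}, what a composite of two morphisms lying in $\mathcal{I}$ looks like. Since the $(1,1)$-entry of $\mathcal{I}$ is the full hom-space $\mathcal{T}(T',T)$, and $\mathcal{T}$ is a category with identities, the identity morphism $1_{T}$ already lies in the $(1,1)$-entry of $\mathcal{I}(\left[\begin{smallmatrix} T & 0 \\ M & 0\end{smallmatrix}\right],\left[\begin{smallmatrix} T & 0 \\ M & 0\end{smallmatrix}\right])$; factoring an arbitrary element of $\mathcal{I}$ through such an identity-carrying idempotent-like object shows every element of $\mathcal{I}$ is a composite of two elements of $\mathcal{I}$. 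The essential structural fact making this work is that the objects $\left[\begin{smallmatrix} T & 0 \\ M & 0\end{smallmatrix}\right]$ behave like ``$\mathcal{T}$-part only'' objects on which $\mathcal{I}$ restricts to the full subcategory-hom, so that $\mathcal{I}$ is generated by genuine identity morphisms and is therefore idempotent.

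Having verified both hypotheses, the conclusion is immediate: Theorem \ref{episuclarga} furnishes the long exact sequence connecting $H^{i}(\mathcal{C})=H^{i}(\Lambda)$ with $H^{i}(\mathcal{C}/\mathcal{I})=H^{i}(\mathcal{U})$, with the $\mathrm{Ext}^{i}_{\mathrm{Mod}(\Lambda^{e})}(\Lambda,\mathcal{I})$ terms appearing exactly as displayed. The one point requiring a word of justification is that $\mathcal{C}/\mathcal{I}$ really is (isomorphic to) $\mathcal{U}$: this follows from Lemma \ref{exacseuqnimpor}, since $H=\mathcal{U}(-,-)\circ(\Phi_{2}\otimes\Phi_{1})$ is the image of the canonical map $\Gamma(\Phi)$ and $\Phi$ induces an isomorphism $\Lambda/\mathcal{I}\xrightarrow{\sim}\mathcal{U}$, the functor $\Phi$ being the evident projection onto the $\mathcal{U}$-corner.

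I expect the main obstacle to be purely bookkeeping rather than conceptual: establishing idempotency of $\mathcal{I}$ requires being careful about how composition in $\Lambda$ mixes the $\mathcal{T}$-morphisms, the $\mathcal{U}$-morphisms and the bimodule $M$ via the rule $m_{2}\bullet t_{1}+u_{2}\bullet m_{1}$, so one must check that a composite lying in $\mathcal{I}$ never accidentally picks up a $\mathcal{U}$-component (which is forced by the vanishing of the lower-right entry of $\mathcal{I}$) and that the $M(U,T')$-entries are hit by composites through the $\mathcal{T}$-identities. Once these entry-wise verifications are done the result follows formally, so the bulk of the argument is simply checking the hypotheses of the already-proven Theorem \ref{episuclarga}.
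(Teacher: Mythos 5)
Your proposal is correct and follows essentially the same route as the paper's own proof: both verify the two hypotheses of Theorem \ref{episuclarga} by citing Remark \ref{projectivida} for the projectivity of $\mathcal{I}(\mathfrak{M},-)$ and by establishing idempotency of $\mathcal{I}$ through precomposition with the morphism $\left[\begin{smallmatrix} 1_{T'} & 0 \\ 0 & 0\end{smallmatrix}\right]$, which lies in $\mathcal{I}$ and satisfies $\left[\begin{smallmatrix} f & 0 \\ m & 0\end{smallmatrix}\right]\circ\left[\begin{smallmatrix} 1_{T'} & 0 \\ 0 & 0\end{smallmatrix}\right]=\left[\begin{smallmatrix} f & 0 \\ m & 0\end{smallmatrix}\right]$ since $m\bullet 1_{T'}=m$, before invoking the main theorem with $\Lambda/\mathcal{I}\simeq\mathcal{U}$. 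The only cosmetic difference is that the paper performs this factorization through the idempotent endomorphism of the same object $\mathfrak{M}'$ rather than through an auxiliary object $\left[\begin{smallmatrix} T' & 0 \\ M & 0\end{smallmatrix}\right]$, but the computation is identical.
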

\begin{proof}
We have an epimorphism $\Phi:\Lambda\longrightarrow \mathcal{U}$ and an exact sequence
in $\mathrm{Mod}(\Lambda^{e})$
$$\xymatrix{0\ar[r] & \mathcal{I}\ar[r] & \Lambda\ar[rr]^(.3){\Gamma(\Phi)} & & \mathcal{U}(-,-)\circ (\Phi^{op}\otimes\Phi)\ar[r] & 0.}$$
We notice that $\mathcal{I}$ is an ideal of $\Lambda$ and $\mathcal{U}\simeq \Lambda/\mathcal{I}$.
By Remark  \ref{projectivida}, we get that $\mathcal{I}(\mathfrak{M},-)$ is projective in $\mathrm{Mod}(\Lambda)$ for all $\mathfrak{M}\in \Lambda$.\\
Now, for an object  $\Big(\left[\begin{smallmatrix}
 T' & 0 \\ 
M & U'
\end{smallmatrix}\right],\left[\begin{smallmatrix}
 T & 0 \\ 
M & U
\end{smallmatrix}\right]\Big)$ in $\Lambda^{op}\otimes_{K}\Lambda=\Lambda^{e}$,  we obtain that

$$\mathcal{I}\Big(\left[\begin{smallmatrix}
 T' & 0 \\ 
M & U'
\end{smallmatrix}\right],\left[\begin{smallmatrix}
 T & 0 \\ 
M & U
\end{smallmatrix}\right]\Big)=\left[\begin{smallmatrix}
 \mathcal{T}(T',T) & 0 \\ 
M(U,T') &  0
\end{smallmatrix}\right]\subseteq \left[\begin{smallmatrix}
\mathcal{T}(T',T) & 0 \\ 
M(U,T') & \mathcal{U}(U',U) 
\end{smallmatrix}\right].$$
We assert that $\mathcal{I}$ is idempotent. Indeed, for $\left[\begin{smallmatrix}
f & 0 \\ 
m & 0
\end{smallmatrix}\right]\in \mathcal{I}\Big(\left[\begin{smallmatrix}
 T' & 0 \\ 
M & U'
\end{smallmatrix}\right],\left[\begin{smallmatrix}
 T & 0 \\ 
M & U
\end{smallmatrix}\right]\Big)$, we get
$\left[\begin{smallmatrix}
1_{T'} & 0 \\ 
0 & 0
\end{smallmatrix}\right]\in \mathcal{I}\Big(\left[\begin{smallmatrix}
 T' & 0 \\ 
M & U'
\end{smallmatrix}\right],\left[\begin{smallmatrix}
 T' & 0 \\ 
M & U'
\end{smallmatrix}\right]\Big)=\left[\begin{smallmatrix}
 \mathcal{T}(T',T') & 0 \\ 
M(U',T') &  0
\end{smallmatrix}\right]$ and

$$\left[\begin{smallmatrix}
f & 0 \\ 
m & 0
\end{smallmatrix}\right]\circ \left[\begin{smallmatrix}
1_{T'} & 0 \\ 
0 & 0
\end{smallmatrix}\right]=\left[\begin{smallmatrix}
f\circ 1_{T'} & 0 \\ 
m\bullet 1_{T'}& 0
\end{smallmatrix}\right]=
\left[\begin{smallmatrix}
f & 0 \\ 
m & 0
\end{smallmatrix}\right]$$
since $ m \bullet 1_{T'}:=M(1_{U'}\otimes 1_{T'}^{op})(m)=m$  because $M(1_{U'}\otimes 1_{T'}^{op})=1_{M(U',T')}$.
This proves that $\mathcal{I}^{2}=\mathcal{I}$, and hence $\mathcal{I}$ is an idempotent ideal of $\Lambda$. Therefore, by Theorem \ref{episuclarga}, we have required exact sequence.
\end{proof}

\subsection{Happel's exact sequence}

In this section, $\mathcal{U}$ will denote a $K$-category and $M:\mathcal{U}\longrightarrow \mathrm{Mod}(K)$ a $K$-functor. We consider $\mathcal{C}_{K}$ the $K$-category with only one object, namely $\mathrm{obj}(\mathcal{C}_{K}):=\{\ast\}$, and the isomorphism  $\Delta: \mathcal{U}\otimes \mathcal{C}_{K}^{op}\longrightarrow \mathcal{U}$ given before Definition \ref{prodtensorconcat}.
We then get  $\underline{M}:\mathcal{U}\otimes \mathcal{C}_{K}^{op}\longrightarrow \mathrm{Mod}(K)$ given as
$\underline{M}:=\Delta\circ M$.
Hence, we can construct the matrix category $\Lambda:=\left[ \begin{smallmatrix}
\mathcal{C}_{K} & 0 \\ 
\underline{M} & \mathcal{U}
\end{smallmatrix}\right].$ This matrix category is called the $\textbf{one-point extension category}$ because it is a generalization of the well-known construction of  one point-extension algebra; see for example page 71 in \cite{AusBook}. In this case the exact sequence given in Theorem \ref{Michelena} has another form that reduces to a long exact sequence that is a generalization of the one given by D. Happel in \cite[Theorem 5.3]{Happel} on page 12; see also article \cite{EduardoGreen}.

The following proposition is a generalization of Theorem 2.8a on page 167 in \cite{Cartan}. 

\begin{proposition}\label{formula2.8aEilenberg}
Let $\mathcal{C},\mathcal{D},\mathcal{E}$ be $K$-projective $K$-categories. Consider functors $F\in  \mathrm{Fun}_{K}(\mathcal{C},\mathrm{Fun}_{K}(\mathcal{E}^{op},{\mathrm{Mod}(K)}))$,  $G\in \mathrm{Fun}_{K}(\mathcal{C}^{op}\otimes_{K} \mathcal{D},{\mathrm{Mod}(K)})$ and  $H\in \mathrm{Fun}_{K}(\mathcal{D},\mathrm{Fun}_{K}(\mathcal{E}^{op},{\mathrm{Mod}(K)}))$. Suppose that
$\mathrm{Tor}_{n}^{\mathcal{C}}(F,G(-,D))=0$  for all  $D\in \mathcal{D}$  and $\forall \,n>0$, and also that $\mathrm{Ext}^{n}_{\mathcal{D}}(G(C,-),H)=0$ for all $C\in \mathcal{C}^{op}$ and $\forall \, n>0$. Then, there exists an isomorphism for all $i\geq 0$:
\begin{equation}
\mathrm{Ext}^{i}_{\mathrm{Mod}(\mathcal{D}\otimes_{K} \mathcal{E}^{op})}\Big(F\boxtimes_{\mathcal{C}}G,H\Big)\simeq \mathrm{Ext}^{i}_{\mathrm{Mod}(\mathcal{E}^{op}\otimes_{K} \mathcal{C})}\Big(F,\mathbb{HOM}_{\mathcal{D}}(G,H)\Big),
\end{equation}
where $\mathbb{HOM}_{\mathcal{D}}(G,H)$ denotes the symbolic hom defined on page 30 in \cite{Mitchelring}.
\end{proposition}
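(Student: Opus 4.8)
The plan is to mirror the proof of Proposition \ref{isocartan}, replacing the associativity of the tensor product (equation \ref{asociatensor}) by the symbolic-Hom adjunction from p.~30 of \cite{Mitchelring}, and trading the acyclicity arguments for $\mathrm{Tor}$ for the corresponding ones for $\mathrm{Ext}$. First I would choose a projective resolution $X^{\bullet}\to F$ in $\mathrm{Mod}(\mathcal{E}^{op}\otimes_{K}\mathcal{C})$ (viewing $F$ as a functor $\mathcal{C}\to\mathrm{Fun}_{K}(\mathcal{E}^{op},\mathrm{Mod}(K))$) and a projective resolution $Y^{\bullet}\to G$ in $\mathrm{Mod}(\mathcal{C}^{op}\otimes_{K}\mathcal{D})$. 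Because $\mathrm{Tor}_{n}^{\mathcal{C}}(F,G(-,D))=0$ for all $D\in\mathcal{D}$ and all $n>0$, Proposition \ref{resolproyten} guarantees that $X^{\bullet}\boxtimes_{\mathcal{C}}Y^{\bullet}$ is a projective resolution of $F\boxtimes_{\mathcal{C}}G$ in $\mathrm{Mod}(\mathcal{D}\otimes_{K}\mathcal{E}^{op})$. Consequently $\mathrm{Ext}^{i}_{\mathrm{Mod}(\mathcal{D}\otimes_{K}\mathcal{E}^{op})}(F\boxtimes_{\mathcal{C}}G,H)$ is directly computed as the cohomology of the total complex of the double complex $\mathrm{Hom}_{\mathrm{Mod}(\mathcal{D}\otimes_{K}\mathcal{E}^{op})}(X^{p}\boxtimes_{\mathcal{C}}Y^{q},H)$.

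Next I would invoke the adjunction isomorphism that defines the symbolic Hom (p.~30 of \cite{Mitchelring}),
\[
\mathrm{Hom}_{\mathrm{Mod}(\mathcal{D}\otimes_{K}\mathcal{E}^{op})}\big(X^{p}\boxtimes_{\mathcal{C}}Y^{q},H\big)\simeq \mathrm{Hom}_{\mathrm{Mod}(\mathcal{E}^{op}\otimes_{K}\mathcal{C})}\big(X^{p},\mathbb{HOM}_{\mathcal{D}}(Y^{q},H)\big),
\]
which is natural in $p$ and $q$ and hence yields an isomorphism of double complexes, and therefore of their total complexes. Thus the single total cohomology group computing $\mathrm{Ext}^{i}(F\boxtimes_{\mathcal{C}}G,H)$ is also the total cohomology of $D^{p,q}:=\mathrm{Hom}_{\mathrm{Mod}(\mathcal{E}^{op}\otimes_{K}\mathcal{C})}(X^{p},\mathbb{HOM}_{\mathcal{D}}(Y^{q},H))$, which I would then evaluate by filtering in the $q$-direction first.

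The crucial step, and the one I expect to be the main obstacle, is identifying the $q$-direction cohomology of $\mathbb{HOM}_{\mathcal{D}}(Y^{\bullet},H)$. Evaluated at a fixed object $C$, the symbolic Hom reduces to $\mathrm{Hom}_{\mathrm{Mod}(\mathcal{D})}(Y^{\bullet}(C,-),H)$; here one uses that the $K$-projectivity of the categories makes $Y^{\bullet}(C,-)$ a projective resolution of $G(C,-)$ in $\mathrm{Mod}(\mathcal{D})$, so that $H^{q}(\mathbb{HOM}_{\mathcal{D}}(Y^{\bullet},H))$ is governed objectwise by $\mathrm{Ext}^{q}_{\mathcal{D}}(G(C,-),H)$. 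The hypothesis $\mathrm{Ext}^{n}_{\mathcal{D}}(G(C,-),H)=0$ for all $C\in\mathcal{C}^{op}$ and all $n>0$ then forces $H^{q}(\mathbb{HOM}_{\mathcal{D}}(Y^{\bullet},H))=0$ for $q>0$ and $=\mathbb{HOM}_{\mathcal{D}}(G,H)$ for $q=0$; in other words $\mathbb{HOM}_{\mathcal{D}}(Y^{\bullet},H)$ is a resolution of $\mathbb{HOM}_{\mathcal{D}}(G,H)$.

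Finally, since each $X^{p}$ is projective the functor $\mathrm{Hom}_{\mathrm{Mod}(\mathcal{E}^{op}\otimes_{K}\mathcal{C})}(X^{p},-)$ is exact, so computing the cohomology of $D^{p,\bullet}$ in the $q$-direction leaves only the surviving row $\mathrm{Hom}_{\mathrm{Mod}(\mathcal{E}^{op}\otimes_{K}\mathcal{C})}(X^{p},\mathbb{HOM}_{\mathcal{D}}(G,H))$. As the double complex is first-quadrant, the total cohomology coincides with the cohomology of this row, which is $\mathrm{Ext}^{i}_{\mathrm{Mod}(\mathcal{E}^{op}\otimes_{K}\mathcal{C})}(F,\mathbb{HOM}_{\mathcal{D}}(G,H))$. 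Comparing the two computations of the total cohomology gives the asserted isomorphism for every $i\geq 0$. The only delicate points are the correct identification of the symbolic-Hom adjunction at the level of complexes and the objectwise comparison of $\mathbb{HOM}_{\mathcal{D}}(Y^{\bullet},H)$ with the derived functors controlled by the hypothesis; the first-quadrant collapse is routine.
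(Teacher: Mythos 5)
Your proposal is correct and is essentially the paper's own proof: the paper simply says the argument is ``similar to the one of Proposition \ref{isocartan}'', and what you have written is precisely that mirrored argument --- projective resolutions $X^{\bullet}\to F$, $Y^{\bullet}\to G$, Proposition \ref{resolproyten} applied via the $\mathrm{Tor}$-vanishing hypothesis, the symbolic-Hom adjunction in place of the associativity isomorphism \ref{asociatensor}, and the $\mathrm{Ext}$-vanishing hypothesis in place of the second $\mathrm{Tor}$-vanishing one, with the double-complex collapse playing the role of the acyclicity discussion cited from p.~32 of \cite{Mitchelring}. Your explicit treatment of the first-quadrant collapse and of the objectwise identification $H^{q}\big(\mathbb{HOM}_{\mathcal{D}}(Y^{\bullet},H)\big)(C)\simeq\mathrm{Ext}^{q}_{\mathcal{D}}(G(C,-),H)$ is exactly the detail the paper leaves implicit.
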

\begin{proof}
The proof is similar to that of Proposition \ref{isocartan}.
\end{proof}

\begin{corollary}(Happel's long exact sequence)\label{longHappel}
Let $M:\mathcal{U}\longrightarrow \mathrm{Mod}(K)$ be a $K$-functor. Consider the one point extension category
$\Lambda:=\left[ \begin{smallmatrix}
\mathcal{C}_{K} & 0 \\ 
\underline{M} & \mathcal{U}
\end{smallmatrix}\right].$
Then there exists a long exact sequence:
$$\xymatrix{0\ar[r] &  H^{0}(\Lambda)\ar[r] &   H^{0}(\mathcal{U})\ar[r] & \mathrm{Hom}_{\mathrm{Mod}(\mathcal{U})}(M,M)/K\ar `[ld] `[] `[llld] |{} `[] [llldr] & \\
&  H^{1}(\Lambda)\ar[r] &   H^{1}(\mathcal{U})\ar[r] &   \mathrm{Ext}_{\mathrm{Mod}(\mathcal{U})}^{1}(M,M)\ar[r] & \cdots &}$$
\end{corollary}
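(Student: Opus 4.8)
The plan is to read off $\Lambda=\left[\begin{smallmatrix}\mathcal{C}_{K}&0\\ \underline{M}&\mathcal{U}\end{smallmatrix}\right]$ as the triangular matrix category of Definition \ref{defitrinagularmat} with $\mathcal{T}=\mathcal{C}_{K}$, so that Theorem \ref{Michelena} applies directly and yields a long exact sequence relating $H^{i}(\Lambda)$ and $H^{i}(\mathcal{U})$ whose connecting terms are $\mathrm{Ext}^{i}_{\mathrm{Mod}(\Lambda^{e})}(\Lambda,\mathcal{I})$, where $\mathcal{I}$ is the ideal of Lemma \ref{exacseuqnimpor}. The whole statement therefore reduces to identifying these groups as $\mathrm{Hom}_{\mathrm{Mod}(\Lambda^{e})}(\Lambda,\mathcal{I})=0$, $\mathrm{Ext}^{1}_{\mathrm{Mod}(\Lambda^{e})}(\Lambda,\mathcal{I})\cong\mathrm{Hom}_{\mathrm{Mod}(\mathcal{U})}(M,M)/K$, and $\mathrm{Ext}^{i}_{\mathrm{Mod}(\Lambda^{e})}(\Lambda,\mathcal{I})\cong\mathrm{Ext}^{i-1}_{\mathrm{Mod}(\mathcal{U})}(M,M)$ for $i\geq 2$; substituting these into the sequence of Theorem \ref{Michelena} produces exactly Happel's sequence, the degree shift by one in the coefficients being what accounts for the $/K$.

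First I would dissect $\mathcal{I}$ as a $\Lambda$-bimodule. Since $\mathcal{T}=\mathcal{C}_{K}$, Lemma \ref{exacseuqnimpor} gives $\mathcal{I}\big(\mathfrak{M}',\mathfrak{M}\big)=\left[\begin{smallmatrix}K&0\\ M(U)&0\end{smallmatrix}\right]$, and a direct check with the composition rule of Definition \ref{defitrinagularmat} shows that the off-diagonal row $\mathcal{I}_{M}:=\left[\begin{smallmatrix}0&0\\ M&0\end{smallmatrix}\right]$ is a subbimodule (the $\mathcal{U}$- and $M$-actions preserve it), with quotient the corner bimodule $\mathcal{I}/\mathcal{I}_{M}\cong\mathcal{C}_{K}$, on which both the left and the right action factor through the top-left corner. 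This gives a short exact sequence $0\to\mathcal{I}_{M}\to\mathcal{I}\to\mathcal{C}_{K}\to 0$ in $\mathrm{Mod}(\Lambda^{e})$, hence a long exact sequence in $\mathrm{Ext}^{\ast}_{\mathrm{Mod}(\Lambda^{e})}(\Lambda,-)$. Note moreover that $\mathcal{I}_{M}\cong S\ten_{K}(M\circ\Phi)$, where $S$ is the simple functor supported at the $\mathcal{C}_{K}$-corner in the first variable and $M\circ\Phi$ is the pullback of $M$ along $\Phi$; this is the source of the degree shift, in contrast with $\mathcal{I}(\mathfrak{M},-)$, which is projective by Remark \ref{projectivida}.

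Next I would evaluate the two outer terms with Proposition \ref{formula2.8aEilenberg}. The corner term collapses to $\mathrm{Ext}^{\ast}_{\mathrm{Mod}(\Lambda^{e})}(\Lambda,\mathcal{C}_{K})\cong H^{\ast}(\mathcal{C}_{K})$, which equals $K$ in degree $0$ and vanishes in positive degrees because $\mathcal{C}_{K}$ is a field; and the row term satisfies $\mathrm{Ext}^{i}_{\mathrm{Mod}(\Lambda^{e})}(\Lambda,\mathcal{I}_{M})\cong\mathrm{Ext}^{i-1}_{\mathrm{Mod}(\mathcal{U})}(M,M)$ for all $i$ (so it vanishes for $i=0$). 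Granting these, the long exact sequence degenerates: for $i\geq 2$ the corner contributions vanish and $\mathrm{Ext}^{i}_{\mathrm{Mod}(\Lambda^{e})}(\Lambda,\mathcal{I})\cong\mathrm{Ext}^{i-1}_{\mathrm{Mod}(\mathcal{U})}(M,M)$, while the low-degree part reads $0\to\mathrm{Hom}_{\mathrm{Mod}(\Lambda^{e})}(\Lambda,\mathcal{I})\to K\xrightarrow{\delta_{0}}\mathrm{Hom}_{\mathrm{Mod}(\mathcal{U})}(M,M)\to\mathrm{Ext}^{1}_{\mathrm{Mod}(\Lambda^{e})}(\Lambda,\mathcal{I})\to 0$. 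Identifying $\delta_{0}$ as the map sending $1$ to $\mathrm{id}_{M}$ (so $\delta_{0}$ is injective with image $K\cdot\mathrm{id}_{M}$) then yields $\mathrm{Hom}_{\mathrm{Mod}(\Lambda^{e})}(\Lambda,\mathcal{I})=0$ and $\mathrm{Ext}^{1}_{\mathrm{Mod}(\Lambda^{e})}(\Lambda,\mathcal{I})\cong\mathrm{Hom}_{\mathrm{Mod}(\mathcal{U})}(M,M)/K$, precisely the terms needed.

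The main obstacle is the middle step: establishing the degree-shift isomorphism $\mathrm{Ext}^{i}_{\mathrm{Mod}(\Lambda^{e})}(\Lambda,\mathcal{I}_{M})\cong\mathrm{Ext}^{i-1}_{\mathrm{Mod}(\mathcal{U})}(M,M)$ and pinning down the connecting map $\delta_{0}$. This requires writing $\mathcal{I}_{M}$ and the corner bimodule in the $\boxtimes_{K}$ form to which Proposition \ref{formula2.8aEilenberg} applies, verifying its $\mathrm{Tor}$- and $\mathrm{Ext}$-vanishing hypotheses (which hold because $K$ is a field, exactly as in the proof of Proposition \ref{piehomoloepi}), and carefully tracing the shift and the image of $\delta_{0}$ through the standard resolution of $\Lambda$. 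The remaining bookkeeping—checking the subbimodule claim and reassembling the identifications into the sequence of Theorem \ref{Michelena}—is routine.
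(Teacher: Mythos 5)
Your proposal is correct and takes essentially the same route as the paper: both reduce Happel's sequence to Theorem \ref{Michelena} together with the three identifications $\mathrm{Hom}_{\mathrm{Mod}(\Lambda^{e})}(\Lambda,\mathcal{I})=0$, $\mathrm{Ext}^{1}_{\mathrm{Mod}(\Lambda^{e})}(\Lambda,\mathcal{I})\simeq \mathrm{Hom}_{\mathrm{Mod}(\mathcal{U})}(M,M)/K$ and $\mathrm{Ext}^{n}_{\mathrm{Mod}(\Lambda^{e})}(\Lambda,\mathcal{I})\simeq \mathrm{Ext}^{n-1}_{\mathrm{Mod}(\mathcal{U})}(M,M)$, obtained via Proposition \ref{formula2.8aEilenberg}. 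Your bimodule decomposition $0\to\mathcal{I}_{M}\to\mathcal{I}\to\mathcal{C}_{K}\to 0$, the degree-shift computation and the analysis of $\delta_{0}$ simply make explicit the argument the paper delegates to its citation of Redondo, p. 133.
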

\begin{proof}
By following ideas in \cite{Redondo} on page 133 and using Proposition \ref{formula2.8aEilenberg} we can show that  $\mathrm{Hom}_{\mathrm{Mod}(\Lambda^{e})}(\Lambda,\mathcal{I})=0$ and  $\mathrm{Ext}_{\mathrm{Mod}(\Lambda^{e})}^{1}(\Lambda,\mathcal{I})= \mathrm{Hom}_{\mathrm{Mod}(\mathcal{U})}(M,M)/K$ and that $\mathrm{Ext}_{\mathrm{Mod}(\Lambda^{e})}^{n}(\Lambda,\mathcal{I})\simeq  \mathrm{Ext}_{\mathrm{Mod}(\mathcal{U})}^{n-1}(M,M)$ for all $n\geq 2$. The result follows from Theorem \ref{Michelena}.
\end{proof}

\subsection{Recollements and torsion pairs}

Let $\mathcal{B}$ be a full additive subcategory of $\mathcal{C}$.  Given $C, C'\in \mathcal{C}$ we denote by $\mathcal{I}_{\mathcal{B}}(C,C')$ the subset of $\mathcal{C}(C,C')$ consisting of morphisms which factor through some object in $\mathcal{B}$. This defines the two-sided ideal $\mathcal{I}_{\mathcal{B}}$ which is an idempotent ideal in $\mathcal{C}$.\\
A pair $(\mathcal{T},\mathcal{F})$ of full subcategories of $\mathcal{C}$ is a torsion pair if the following conditions hold.

\begin{enumerate}
\item [(a)] $\mathrm{Hom}_{\mathcal{C}}(M,N)=0$ for all $M\in \mathcal{T}, N\in \mathcal{F}$.

\item [(b)] For all $C\in \mathcal{C}$ there exists an exact sequence
$$\xymatrix{0\ar[r] & Y_{C}\ar[r] & C\ar[r] & Z^{C}\ar[r] & 0}$$
with $Y_{C}\in \mathcal{T}$ and $Z^{C}\in \mathcal{F}$.
\end{enumerate}
In this case $\mathcal{T}$ is called a torsion class and $\mathcal{F}$ a torsion free class. It is well known that $\mathcal{T}$ is a torsion class if and only if $\mathcal{T}$ is closed under quotients,  coproducts and extensions. Dually, $\mathcal{F}$ is a torsion free class if and only if $\mathcal{F}$ is closed under subobjects,  products and extensions.  A triple $(\mathcal{X},\mathcal{Y},\mathcal{Z})$ of full subcategories of $\mathcal{C}$ is a TTF triple  if $(\mathcal{X},\mathcal{Y})$ and $(\mathcal{Y},\mathcal{Z})$ are torsion pairs.\\
Let $\mathcal{C}$ be a $K$-category such that $\mathrm{Mod}(\mathcal{C})$ has global dimension equal to $1$, that is,  $\mathrm{Mod}(\mathcal{C})$  is hereditary. We have the following result.

\begin{proposition}\label{homottf}
Let  $\mathcal{C}$ be a $K$-category such that $\mathrm{Mod}(\mathcal{C})$  is hereditary. There exists a bijection between the class of  TTF triples  $(\mathcal{T},\mathcal{F},\mathcal{F}')$ in $\mathrm{Mod}(\mathcal{C})$ and homological epimorphisms $\pi:\mathcal{C}\longrightarrow \mathcal{C}/\mathcal{I}$.
\end{proposition}
\begin{proof}
Let $(\mathcal{T},\mathcal{F},\mathcal{F}')$ be a TTF triple, we consider the idempotent ideal $\mathcal{I}(A,B)=\{f\mid M(f)=0,\,\,\forall M\in F\}$. Hence, we get the functor $\pi:\mathcal{C}\longrightarrow \mathcal{C}/\mathcal{I}$ and we obtain that $\mathcal{F}\simeq \mathrm{Mod}(\mathcal{C}/\mathcal{I})$. Since $\mathcal{I}$ is idempotent, we have that
there exists isomorphisms
$$\mathrm{Ext}^{i}_{\mathrm{Mod}(\mathcal{C}/\mathcal{I})}(M,N)\simeq 
\mathrm{Ext}^{i}_{\mathrm{Mod}(\mathcal{C})}(M\circ \pi,N\circ \pi)$$
for $i=0,1$. Since $\mathrm{Mod}(\mathcal{C})$ is hereditary, we get that
$\mathrm{Ext}^{1}_{\mathrm{Mod}(\mathcal{C})}(M',-)$ is exact for all $M'\in \mathrm{Mod}(\mathcal{C})$. Hence, we have that $\mathrm{Ext}^{1}_{\mathrm{Mod}(\mathcal{C}/\mathcal{I})}(M,-)$ is exact for all $M\in \mathrm{Mod}(\mathcal{C}/\mathcal{I})$ and, thus $\mathrm{Mod}(\mathcal{C}/\mathcal{I})$ is hereditary. Therefore,
$$\mathrm{Ext}^{i}_{\mathrm{Mod}(\mathcal{C}/\mathcal{I})}(M,N)\simeq 
\mathrm{Ext}^{i}_{\mathrm{Mod}(\mathcal{C})}(M\circ \pi,N\circ \pi)=0$$
for all $i\geq 2$. Proving that $\phi:\mathcal{C}\longrightarrow \mathcal{C}/\mathcal{I}$ is a homological epimorphism.\\
Now, let $\phi:\mathcal{C}\longrightarrow \mathcal{C}/\mathcal{I}$ be a homological epimorphism, the associated TTF is
$$\Big(\,^{\perp}\mathrm{Mod}(\mathcal{C}/\mathcal{I}),\mathrm{Mod}(\mathcal{C}/\mathcal{I}),\mathrm{Mod}(\mathcal{C}/\mathcal{I})^{\perp}\Big).$$
It is easy to see that this assignments are bijective and inverse of each other.
\end{proof}

\begin{proposition}\label{homotorsion}
Let $\mathcal{C}$ be a complete and cocomplete abelian $K$-category and  $\mathcal{F}$ a torsion free class. Hence, $\pi:\mathcal{C}\longrightarrow \mathcal{C}/\mathcal{I}_{\mathcal{F}}$ is a homological epimorphism.
\end{proposition}
\begin{proof}
Consider the  torsion pair $(\mathcal{T},\mathcal{F})$ associated to $\mathcal{F}$. Hence, for $C\in \mathcal{C}$ there exists a unique exact sequence
$$\xymatrix{0\ar[r] & Y_{C}\ar[r]^{i} & C\ar[r]^{p} & Z^{C}\ar[r] & 0}$$
with $Y_{C}\in \mathcal{T}$ and $Z^{C}\in \mathcal{F}$.\\
Now, let $\alpha:C\longrightarrow Z$ with $Z\in \mathcal{F}$. Since $(\mathcal{T},\mathcal{F})$ is a torsion pair and $Y_{C}\in \mathcal{T}$, we have that $\mathcal{C}(Y_{C},Z)=0$, and hence $\alpha i=0$. Thus, by the universal property of the cokernel, there exists a unique morphism $\alpha':Z^{C}\longrightarrow Z$ such that $\alpha=\alpha' p$. That is, the following diagram commutes
$$\xymatrix{0\ar[r] & Y_{C}\ar[r]^{i} & C\ar[r]^{p}\ar[dr]_{\alpha} & Z^{C}\ar[r]\ar[d]^{\alpha'} & 0\\
 & & & Z.}$$
For each $C'\in \mathcal{C}$ we define the morphism
$$\Psi_{C'}:\mathcal{C}(Z^{C},C')\longrightarrow \mathcal{I}_{\mathcal{F}}(C,C'),$$
given as $\Psi_{C'}(\gamma)=\gamma p$. Since $p$ is an epimorphism, we have that $\Psi_{C'}$ is injective.\\
Now, let $f\in \mathcal{I}_{\mathcal{F}}(C,C')$. Then, there exists $f_{1}:C\longrightarrow Z$ and $f_{2}:Z\longrightarrow C'$ such that $f=f_{2}f_{1}$ and $Z\in \mathcal{F}$. By the discussion above, there exists $f_{1}':Z^{C}\longrightarrow Z$ such that $f_{1}=f_{1}' p.$ Hence, $f=(f_{2}f_{1}')p=\Psi_{C'}(f_{2}f_{1}')$, proving that  $\Psi_{C'}$ is surjective and, we conclude that
$$\Psi=\mathcal{C}(p,-):\mathcal{C}(Z^{C},-)\longrightarrow \mathcal{I}_{\mathcal{F}}(C,-)$$
is an isomorphism. Thus, $\mathcal{I}_{\mathcal{F}}(C,-)$ is  a projective $\mathcal{C}$-module for all $C\in \mathcal{C}$. By Proposition \ref{proyepihomo}, we get that $\pi:\mathcal{C}\longrightarrow \mathcal{C}/\mathcal{I}_{\mathcal{F}}$ is a homological epimorphism. 
\end{proof}

\begin{corollary}\label{lonseqiencetor}
Let $\mathcal{C}$ be a complete and cocomplete abelian $K$-category and $(\mathcal{T},\mathcal{F})$ a torsion theory. Then, there is a long exact sequence that relates the Hochschild-Mitchell cohomology of $\mathcal{C}$ to the Hochschild-Mitchell cohomology of $\mathcal{B}=\mathcal{C}/\mathcal{I}_{\mathcal{F}}$ 
$$\xymatrix{0\ar[r] & \mathrm{Hom}_{\mathrm{Mod}(\mathcal{C}^{e})}(\mathcal{C},\mathcal{I}_{\mathcal{F}})\ar[r] &   H^{0}(\mathcal{C})\ar[r] &  H^{0}(\mathcal{B})\ar `[ld] `[] `[llld] |{\delta} `[] [llldr] & \\
&  \mathrm{Ext}_{\mathrm{Mod}(\mathcal{C}^{e})}^{1}(\mathcal{C},\mathcal{I}_{\mathcal{F}})\ar[r] &   H^{1}(\mathcal{C})\ar[r] &   H^{1}(\mathcal{B})\ar `[ld] `[] `[llld] |{\delta} `[] [llldr] &\\
&  \mathrm{Ext}_{\mathrm{Mod}(\mathcal{C}^{e})}^{2}(\mathcal{C},\mathcal{I}_{\mathcal{F}})\ar[r] &   H^{2}(\mathcal{C})\ar[r] &   H^{2}(\mathcal{B})\ar[r] & \cdots}$$
\end{corollary}
\begin{proof}
It follows by Theorem \ref{episuclarga} and Proposition \ref{homotorsion}.
\end{proof}

\vspace{0.5cm}
Let us consider an example. Let $A$ be the quotient path $K$-algebra given by the quiver
$$\xymatrix{ & 2\ar[dr]^{\alpha} & \\
1\ar[ur]^{\alpha} & & 3\ar[ll]_{\alpha}}$$
and the third power of the ideal generated by all the arrows. The Auslander-Reiten quiver can be seen in the Figure \ref{ARquiver}, where every module is represented by its Loewy series.

\begin{figure}[h]
\[ \scalebox{0.8}{
    \centering
			\begin{tikzpicture}[line cap=round,line join=round ,x=2.0cm,y=1.8cm]
				\clip(-2.2,-0.1) rectangle (4.1,2.5);
					\draw [->] (-0.8,0.2) -- (-0.2,0.8);
					\draw [->] (1.2,0.2) -- (1.8,0.8);
					\draw [->] (3.2,0.2) -- (3.8,0.8);
					\draw [->] (-1.8,1.2) -- (-1.2,1.8);
					\draw [->] (0.2,1.2) -- (0.8,1.8);
					\draw [->] (2.2,1.2) -- (2.8,1.8);
					\draw [dashed] (-0.8,0.0) -- (0.8,0.0);
					\draw [dashed] (1.2,0.0) -- (2.8,0.0);
					\draw [dashed] (-1.8,1.0) -- (-0.2,1.0);
					\draw [dashed] (0.2,1.0) -- (1.8,1.0);
					\draw [dashed] (2.2,1.0) -- (3.8,1.0);
					\draw [dashed] (-2.0,0.0) -- (-1.2,0.0);
					\draw [dashed] (3.2,0.0) -- (4.0,0.0);
					\draw [->] (0.2,0.8) -- (0.8,0.2);
					\draw [->] (2.2,0.8) -- (2.8,0.2);
					\draw [->] (-1.8,0.8) -- (-1.2,0.2);
					\draw [->] (-0.8,1.8) -- (-0.2,1.2);
					\draw [->] (1.2,1.8) -- (1.8,1.2);
					\draw [->] (3.2,1.8) -- (3.8,1.2);
				
				\begin{scriptsize}
					\draw[color=black] (-1,0) node {$\rep{2}$};
					\draw[color=black] (1,0) node {$\rep{1}$};
					\draw[color=black] (3,0) node {$\rep{3}$};
					\draw[color=black] (-2,1) node {$\rep{2\\3}$};
					\draw[color=black] (0,1) node {$\rep{1\\2}$};
					\draw[color=black] (2,1) node {$\rep{3\\1}$};
					\draw[color=black] (4,1) node {$\rep{2\\3}$};
					\draw[color=black] (-1,2) node {$\rep{1\\2\\3}$};
					\draw[color=black] (1,2) node {$\rep{3\\1\\2}$};
					\draw[color=black] (3,2) node {$\rep{2\\3\\1}$};
				\end{scriptsize}
			\end{tikzpicture}
			}\]
\caption{The Auslander-Reiten quiver of $\mathrm{mod}(A)$.}
    \label{ARquiver}
   \end{figure}
   
Now, let us consider the following $\tau$-tilting $A$-module $M=\rep{1\\2\\3}\oplus \rep{1\\2}\oplus 2$  (we refer to \cite{AIR} for the basic theory of $\tau$-tilting theory). In this case, we can compute $\mathrm{Fac}(M)$, that is, the category of all factor modules of finite direct sums of copies of $M$.
We can see that the indecomposable objects of $\mathrm{Fac}(M)$  are given by the following set $\left\{\rep{1\\2\\3}, \rep{1\\2}, 2, 1\right\}$. By \cite[Corollary 2.8]{AIR}, we have that $\mathrm{Fac}(M)$ is a functorially finite torsion class. We can see that the indecomposable objects in the corresponding torsion free class are given by  $\mathcal{F}=M^{\perp}=\{\rep{2\\3}, 3\}$.  By a result due to Auslander-Smal\o-Hoshino (see for example \cite[Proposition 1.2]{AIR}), we have that $\mathcal{F}$ is a functorially finite torsion free class.\\
By work of Beligiannis (see \cite[Theorem 3.1]{BeligianisMarmaradis}), we have that $\mathrm{mod}(\Lambda)/\mathcal{I}_{\mathcal{F}}=\underline{\mathrm{mod}}_{\mathcal{F}}(\Lambda)$  has a natural structure of left triangulated category. Moreover, by Corollary \ref{lonseqiencetor}, we can relate the Hochschild-Mitchell cohomology of $\mathrm{mod}(\Lambda)$ and $\mathrm{mod}(\Lambda)/\mathcal{I}_{F}$. The Auslander-Reiten quiver of $\mathrm{mod}(A)/\mathcal{I}_{\mathcal{F}}$ is given in the figure \ref{estable}.

   \begin{figure}[h]
\[ \scalebox{0.8}{
    \centering
			\begin{tikzpicture}[line cap=round,line join=round ,x=2.0cm,y=1.8cm]
				\clip(-2.2,-0.1) rectangle (4.1,2.5);
					\draw [->] (-0.8,0.2) -- (-0.2,0.8);
					\draw [->] (1.2,0.2) -- (1.8,0.8);
					\draw [->] (0.2,1.2) -- (0.8,1.8);
					\draw [->] (2.2,1.2) -- (2.8,1.8);
					\draw [dashed] (-0.8,0.0) -- (0.8,0.0);
					\draw [dashed] (0.2,1.0) -- (1.8,1.0);
					\draw [->] (0.2,0.8) -- (0.8,0.2);
					\draw [->] (-0.8,1.8) -- (-0.2,1.2);
					\draw [->] (1.2,1.8) -- (1.8,1.2);
				
				\begin{scriptsize}
					\draw[color=black] (-1,0) node {$\rep{2}$};
					\draw[color=black] (1,0) node {$\rep{1}$};
					\draw[color=black] (0,1) node {$\rep{1\\2}$};
					\draw[color=black] (2,1) node {$\rep{3\\1}$};
					\draw[color=black] (-1,2) node {$\rep{1\\2\\3}$};
					\draw[color=black] (1,2) node {$\rep{3\\1\\2}$};
					\draw[color=black] (3,2) node {$\rep{2\\3\\1}$};
				\end{scriptsize}
			\end{tikzpicture}
			}\]
\caption{The Auslander-Reiten quiver of $\mathrm{mod}(A)/\mathcal{I}_{\mathcal{F}}$}
    \label{estable}
   \end{figure}

Finally, we will see that certain recollement of abelian categories can be lifted to a recollement of derived categories. We refer to \cite{Psaro3},  for the basic notions of recollements in abelian and triangulated categories.

\begin{theorem}\label{recolleabel}
Let $\mathcal{B}$ be a full additive subcategory of $\mathcal{C}$ and suppose that $\mathcal{I}_{\mathcal{B}}$ is a strongly idempotent ideal. Consider the recollement of abelian categories

$$\xymatrix{\mathrm{Mod}(\mathcal{C}/\mathcal{I}_{\mathcal{B}})\ar[rr]|{\pi_{\ast}=\pi_{!}}  &  & \mathrm{Mod}(\mathcal{C})\ar[rr]|{i^{!}=i^{\ast}}\ar@<-2ex>[ll]_{\pi^{\ast}}\ar@<2ex>[ll]^{\pi^{!}}  &   & \mathrm{Mod}(\mathcal{B})\ar@<-2ex>[ll]_{i_{!}}\ar@<2ex>[ll]^{i_{\ast}}}$$
given in \cite[Theorem 3.6]{LGOS2}. Then we have a recollement of triangulated categories

$$\xymatrix{\mathrm{D}(\mathrm{Mod}(\mathcal{C}/\mathcal{I}_{\mathcal{B}}))\ar[rr]|{\pi_{\ast}=\pi_{!}}  &  & \mathrm{D}(\mathrm{Mod}(\mathcal{C}))\ar[rr]|{i^{!}=i^{\ast}}\ar@<-2ex>[ll]_{L(\pi^{\ast})}\ar@<2ex>[ll]^{R(\pi^{!})}  &   & \mathrm{D}(\mathrm{Mod}(\mathcal{B})).\ar@<-2ex>[ll]_{L(i_{!})}\ar@<2ex>[ll]^{R(i_{\ast})}}$$


\end{theorem}
\begin{proof}
Consider the functor $\pi:\mathcal{C}\longrightarrow \mathcal{C}/\mathcal{I}_{\mathcal{B}}$ and the inclusion $i:\mathcal{B}\longrightarrow \mathcal{C}$.
We have the following exact sequence of abelian categories
$$\xymatrix{0\ar[r] & \mathrm{Mod}(\mathcal{C}/\mathcal{I}_{\mathcal{B}})\ar[r]^{\pi_{\ast}}& \mathrm{Mod}(\mathcal{C})\ar[r]^{i^{\ast}} & \mathrm{Mod}(\mathcal{B})\ar[r] & 0.}$$
By \cite[Theorem 3.2]{Miyachi}, we get the following exact sequence of triangulated categories
$$\xymatrix{0\ar[r] & \mathrm{D}(\mathrm{Mod}(\mathcal{C}))_{\pi_{\ast}\big(\mathrm{Mod}(\mathcal{C}/\mathcal{I}_{\mathcal{B}})\big)}\ar[r] & \mathrm{D}(\mathrm{Mod}(\mathcal{C}))\ar[r]^{i^{\ast}} & \mathrm{D}(\mathrm{Mod}(\mathcal{B}))\ar[r] & 0}$$
where $\mathrm{D}(\mathrm{Mod}(\mathcal{C}))_{\pi_{\ast}\big(\mathrm{Mod}(\mathcal{C}/\mathcal{I}_{\mathcal{B}})\big)}$ is the full subcategory of $\mathrm{D}(\mathrm{Mod}(\mathcal{C}))$ consisting of the complexes whose homology belongs to $\pi_{\ast}\Big(\mathrm{Mod}(\mathcal{C}/\mathcal{I}_{\mathcal{B}})\Big)\simeq  \mathrm{Mod}(\mathcal{C}/\mathcal{I}_{\mathcal{B}})$.\\
Consider the inclusion $j:\pi_{\ast}\big(\mathrm{Mod}(\mathcal{C}/\mathcal{I}_{\mathcal{B}})\big)\longrightarrow \mathrm{Mod}(\mathcal{C})$, we have that $\pi_{\ast}\big(\mathrm{Mod}(\mathcal{C}/\mathcal{I}_{\mathcal{B}})\big)$ is a Serre subcategory of $\mathrm{Mod}(\mathcal{C})$.\\ 
Now, we consider the induced functor $j_{\ast}:\mathrm{D}\big(\pi_{\ast}\big(\mathrm{Mod}(\mathcal{C}/\mathcal{I}_{\mathcal{B}})\big)\big)\longrightarrow 
\mathrm{D}\big(\mathrm{Mod}(\mathcal{C})\big)$. By \cite[Proposition 1.7.11]{Kashiwara} in page 49, we get that $j_{\ast}$ gives an equivalence
$$\mathrm{D}\big(\pi_{\ast}\big(\mathrm{Mod}(\mathcal{C}/\mathcal{I}_{\mathcal{B}})\big)\big)\simeq 
\mathrm{D}\big(\mathrm{Mod}(\mathcal{C})\big)_{\pi_{\ast}\big(\mathrm{Mod}(\mathcal{C}/\mathcal{I}_{\mathcal{B}})\big)}.$$
Hence, we obtain the following exact sequence of triangulated categories

\begin{equation}\label{Sucexacta tring}
\xymatrix{0\ar[r] & \mathrm{D}(\mathrm{Mod}(\mathcal{C}/\mathcal{I}_{\mathcal{B}}))\ar[r]^{\pi_{\ast}} & \mathrm{D}(\mathrm{Mod}(\mathcal{C}))\ar[r]^{i^{\ast}} & \mathrm{D}(\mathrm{Mod}(\mathcal{B}))\ar[r] & 0}
\end{equation}
Now, we consider the following diagram of adjoint functors
$$\xymatrix{\mathrm{Mod}(\mathcal{C}/\mathcal{I}_{\mathcal{B}})\ar[rr]|{\pi_{\ast}=\pi_{!}}  &  & \mathrm{Mod}(\mathcal{C}).\ar@<-2ex>[ll]_{\pi^{\ast}}\ar@<2ex>[ll]^{\pi^{!}} }$$
Since $\mathrm{Mod}(\mathcal{C})$ and  $\mathrm{Mod}(\mathcal{C}/\mathcal{I}_{\mathcal{B}})$ are abelian categories with enough injectives and projectives, this adjunction pass to derived categories
$$\xymatrix{\mathrm{D}(\mathrm{Mod}(\mathcal{C}/\mathcal{I}_{\mathcal{B}}))\ar[rr]|{\pi_{\ast}=\pi_{!}}  &  &\mathrm{D}(\mathrm{Mod}(\mathcal{C})),\ar@<-2ex>[ll]_{L(\pi^{\ast})}\ar@<2ex>[ll]^{R(\pi^{!})} }$$
where $L(\pi^{\ast})$ and $R(\pi^{!})$ denote the left and right derived functors of $\pi^{\ast}$ and $\pi^{!}$ respectively. Since $\pi:\mathcal{C}\longrightarrow \mathcal{C}/\mathcal{I}$ is a homological epimorphism, we have that $\pi_{\ast}$ is full and faithful. By \cite[Theorem 2.1]{PS1}, we can complete the previous diagram to a recollement where the third category is the Verdier quotient of $\mathrm{D}(\mathrm{Mod}(\mathcal{C}))$ by $\mathrm{D}(\mathrm{Mod}(\mathcal{C}/\mathcal{I}))$.
By the sequence given in Equation \ref{Sucexacta tring}, we have that 
 $\frac{\mathrm{D}(\mathrm{Mod}(\mathcal{C}))}{\mathrm{D}(\mathrm{Mod}(\mathcal{C}/\mathcal{I}_{\mathcal{B}}))}\simeq \mathrm{D}(\mathrm{Mod}(\mathcal{B}))$.\\
 Hence, by  \cite[Theorem 2.1]{PS1}, we have a recollement 
 $$\xymatrix{\mathrm{D}(\mathrm{Mod}(\mathcal{C}/\mathcal{I}_{\mathcal{B}}))\ar[rr]|{\pi_{\ast}=\pi_{!}}  &  & \mathrm{D}(\mathrm{Mod}(\mathcal{C}))\ar[rr]|{i^{!}=i^{\ast}}\ar@<-2ex>[ll]_{L(\pi^{\ast})}\ar@<2ex>[ll]^{R(\pi^{!})}  &   & \mathrm{D}(\mathrm{Mod}(\mathcal{B})).\ar@<-2ex>[ll]_{F}\ar@<2ex>[ll]^{G}}$$
Similarly, by considering the following diagram of adjoint functors
$$\xymatrix{\mathrm{Mod}(\mathcal{C})\ar[rr]|{i^{!}=i^{\ast}}  &   & \mathrm{Mod}(\mathcal{B}),\ar@<-2ex>[ll]_{i_{!}}\ar@<2ex>[ll]^{i_{\ast}}}$$
we obtain the following diagrama of adjoint functors
$$\xymatrix{\mathrm{D}(\mathrm{Mod}(\mathcal{C}))\ar[rr]|{i^{!}=i^{\ast}}  &   & \mathrm{D}(\mathrm{Mod}(\mathcal{B})).\ar@<-2ex>[ll]_{L(i_{!})}\ar@<2ex>[ll]^{R(i_{\ast})}}$$
We conclude that $F=L(i_{!})$ and $G=R(i_{\ast})$, proving the result.
\end{proof}

\section*{Acknowledgements}
The authors thank Professor Eduardo do Nascimento Marcos for sharing his ideas related to this paper's content with us and for the many helpful discussions.\\

\bibliographystyle{amsplain}

\end{document}